\newtheorem{thm}{Theorem}[section]
\newtheorem{lem}[thm]{Lemma}
\newtheorem{prop}[thm]{Proposition}
\newtheorem{cor}[thm]{Corollary}
\newtheorem{defn}[thm]{Definition}
\newcommand{\thmref}[1]{Theorem~\ref{#1}}
\newcommand{\lemref}[1]{Lemma~\ref{#1}}
\newcommand{\rmkref}[1]{Remark~\ref{#1}}
\newcommand{\propref}[1]{Proposition~\ref{#1}}
\newcommand{\corref}[1]{Corollary~\ref{#1}}
\newtheorem{rmk}[thm]{Remark}
\begin{document}

\baselineskip=17pt

\title[Certain congruences]
{Congruences in Hermitian Jacobi and Hermitian modular forms}
\author{Jaban Meher and Sujeet Kumar Singh}

\address[Jaban Meher]{School of Mathematical Sciences, National Institute of Science Education and Research, Bhubaneswar, HBNI, P.O. Jatni, Khurda 752050, Odisha, India.}
\email{jaban@niser.ac.in}

\address[Sujeet Kumar Singh]{School of Mathematical Sciences, National Institute of Science Education and Research, Bhubaneswar, HBNI, P.O. Jatni, Khurda 752050, Odisha, India.}
\email{sujeet.singh@niser.ac.in}

\subjclass[2010]{11F33, 11F55, 11F50}
\date{\today}

\keywords{Hermitian modular forms, Hermitian Jacobi forms, $U(p)$ congruences, Ramanujan-type congruences}

\maketitle
\begin{abstract}
In this paper we first prove an isomorphism between certain spaces of Jacobi forms. Using this isomorphism, we study the mod $p$ theory of Hermitian Jacobi forms over $\mathbb{Q}(i)$.
We then apply the mod $p$ theory of Hermitian Jacobi forms to characterize $U(p)$ congruences and to study Ramanujan-type congruences for Hermitian Jacobi forms and Hermitian modular forms of degree $2$ over $\mathbb{Q}(i)$.
\end{abstract}

\section{Introduction}
The Fourier coefficients of modular forms are related to many objects in number theory. Therefore there have been a great amount of research on studying the arithmetic properties of Fourier coefficients of modular forms and in general of different automorphic functions. In particular, a lot of research is based on studying various congruence properties of Fourier coefficients of different automorphic functions. The theory of Serre \cite{serre} and Swinnerton-Dyer \cite{swinnerton} on modular forms modulo a prime $p$ has a great impact in studying the congruences of Fourier coefficients of modular forms.
There are two kinds of congruences namely, $U(p)$ congruences and Ramanujan-type congruences which have attracted many mathematicians due to their various applications in number theory. 
Both $U(p)$ congruences and Ramanujan-type congruences are applications of the theory of Serre and Swinnerton-Dyer.
$U(p)$ congruences involve Atkin's $U$-operator. On the other hand, Ramanujan-type congruences are certain  kinds of congruences which were first studied by Ramanujan for the partition function $p(n)$. $U(p)$ congruences for elliptic modular forms have been studied by Ahlgren and Ono \cite{ahlgren}, Elkies, Ono and Yang \cite{elkies} and Guerzhoy \cite{guerzhoy}. We refer to the book of Ono \cite{ono} for a good overview of the $U(p)$ congruences. Ramanujan-type congruences for elliptic modular forms have been studied by Cooper, Wage and Wang \cite{cooper}, Dewar \cite{dewar1, dewar1.1} and Sinick 
\cite{sinick}. To prove results on $U(p)$ congruences and Ramanujan-type congruences for elliptic modular forms, one needs to study elliptic modular forms modulo a prime $p$ and prove certain results on filtrations of elliptic modular forms. $U(p)$ congruences for Siegel modular forms of degree $2$ were studied by Choi, Choie and Richter \cite{choies}. To prove results on $U(p)$ congruences, they used the results of Nagaoka \cite{nagaoka3} on Siegel modular forms of degree $2$ mod $p$ and certain results of Richter \cite{olav1, olav2} on Jacobi forms mod $p$. In fact, they proved certain results on filtrations of Siegel modular forms of degree $2$ and using those results on filtrations they proved the result on $U(p)$ congruences for Siegel modular forms of degree $2$. 
Raum and Richter \cite{richter} have studied $U(p)$ congruences for Siegel modular forms of any degree. On the other hand,
Ramanujan-type congruences for Jacobi forms and Siegel modular forms of degree $2$ were studied by Dewar and Richter \cite{dewar2} using the theories of Jacobi forms mod $p$ and Siegel modular forms of degree $2$ mod $p$. In this paper we study $U(p)$ congruences and Ramanujan-type congruences for Hermitian Jacobi forms and Hermitian modular forms of degree $2$ over $\mathbb{Q}(i)$. To study these results, one needs to know the theories of Hermitian Jacobi forms modulo $p$ and Hermitian modular forms modulo $p$. The theory of Hermitian Jacobi forms mod $p$ has been studied by Richter and Senadheera \cite{sena2}. But they have studied only Hermitian Jacobi forms of index $1$. In the same paper, using their results on Hermitian Jacobi forms mod $p$, they have proved a result on $U(p)$ congruences for Hermitian Jacobi forms of index $1$. Therefore if one wants to study $U(p)$ congruences for Hermitian Jacobi forms of any integer index, one needs to study the theory of Hermitian Jacobi forms mod $p$ for any integer index. Thus we first establish various results on Hermitian Jacobi forms mod $p$ for any integer index.
Using these results, we characterize $U(p)$ congruences and study Ramanujan-type congruences 
for Hermitian Jacobi forms of any integer index. Next we study Hermitian modular forms of degree $2$. Using the results of Kikuta and Nagaoka 
\cite{nagaoka1, nagaoka2} on Hermitian modular forms of degree $2$ modulo $p$ and our results on Hermitian Jacobi forms mod $p$, we characterize $U(p)$ congruences and study Ramanujan-type congruences for certain Hermitian modular forms of degree $2$.

   The paper is organised as follows. In Section 2, we recall some basics on Hermitian Jacobi forms over $\mathbb{Q}(i)$ and obtain some relations between Hermitian Jacobi forms and Jacobi forms. We also prove an isomorphism between two different spaces of Jacobi forms. This isomorphism is very crucial in proving some important results in Section 3.
In Section 3, we discuss Hermitian Jacobi forms modulo a prime $p$ and prove certain results on filtrations which are main ingredients to prove the main results in Section 4. In Section 4, we prove results on $U(p)$ congruences and Ramanujan-type congruences for Hermitian Jacobi forms of arbitrary integer index. In Section 5, we illustrate some examples to explain $U(p)$ congruences and Ramanujan-type congruences for Hermitian Jacobi forms. In Section 6, we recall some basics and known results on Hermitian modular forms of degree $2$ over $\mathbb{Q}(i)$. In Section 7, we use some results proved in Section 3 to prove a result on filtrations of Hermitian modular forms of degree $2$ modulo $p$. This result is one of the main ingredients in the proofs of the main results in Section 8. In Section 8, we prove results on $U(p)$ congruences and Ramanujan-type congruences for certain Hermitian modular forms of degree $2$. In Section 9, we provide some examples to illustrate the results proved in Section 8.

\section{Hermitian Jacobi forms}
Let $\mathcal{O}:=\mathbb{Z}[i]$ be the ring of integers of $\mathbb{Q}(i)$ with inverse different
$\mathcal{O}^{\#}=\frac{i}{2}\mathcal{O}$,
%and $\mathcal{O}^{\#}=\frac{i}{2}\mathcal{O}$ (the inverse different of $\mathbb{Q}(i)\mid \mathbb{Q}$).
 let $\mathcal{O}^{\times}:=\{1, -1, i, -i\}$ be the set of units in $\mathcal{O}$. The Hermitian Jacobi group over $\mathcal{O}$ is $\Gamma^J(\mathcal{O})=\Gamma(\mathcal{O})\ltimes\mathcal{O}^2$, where $\Gamma(\mathcal{O})=\{\epsilon M ~|~M \in SL_2(\mathbb{Z}), \epsilon \in \mathcal{O}^{\times}\}$ is the Hermitian modular group. For any $r\in \mathbb{Q}(i)$, the norm of $r$ is defined by
 $N(r):=r\overline{r}$. Throughout the paper we use $e(z)=e^{2\pi i z}$ and $M^t$ as the transpose of the matrix $M$. Let $\mathcal{H}$ be the complex upper half-plane.
\begin{defn}
A holomorphic function $\phi:\mathcal{H}\times \mathbb{C}^2\longrightarrow \mathbb{C}$  is a Hermitian Jacobi form for $\Gamma^J(\mathcal{O})$ of weight $k$, index $m$ and parity $\delta\in \{\ +, -\}$ if for each 
$M=
\begin{pmatrix}
a & b \\
c & d
\end{pmatrix}
\in SL_2(\mathbb{Z})
$, $\epsilon \in \mathcal{O}^{\times}$ and $\lambda, \mu \in \mathcal{O}$, we have 
%$\phi(\tau, z_1, z_2)$ satisfies the following conditions:

\begin{equation}\label{hj1}
\phi \mid_{k, m, \delta}\epsilon M(\tau, z_1, z_2):=\sigma(\epsilon)\epsilon^{-k}(c\tau+d)^{-k}e^{\frac{-2\pi imcz_1z_2}{c\tau+d}}\phi\left(\frac{a\tau +b}{c\tau+d}, \frac{\epsilon z_1}{c\tau+d}, \frac{\overline{\epsilon} z_2}{c \tau+d}\right)=\phi(\tau, z_1, z_2),
\end{equation}
where  $\tau \in \mathcal{H}$, $z_1, z_2 \in \mathbb{C}$ and
\[ \sigma(\epsilon)=\begin{cases} 
      1 & ~~~~\text{if}~~~~\delta=+, \\
      \epsilon^2 &~~~~ \text{if}~~~~ \delta=-,
   \end{cases}
\]
%for all 
%$
%M=\begin{pmatrix}
%a & b\\
%c& d 
%\end{pmatrix}
%$ in $SL_2(\mathbb{Z})$, where $\sigma(\epsilon)=1$ if $\delta=+$ and $%\sigma(\epsilon)=\epsilon^2$ if $\delta=-$.
\begin{equation}\label{hj2}
\phi\mid_{m}[\lambda, \mu](\tau, z_1, z_2):=e^{2\pi im (\lambda\overline{\lambda}\tau +\overline{\lambda}z_1+\lambda z_2)} \phi\left(\tau, z_1+\lambda \tau +\mu, z_2+\overline{\lambda}\tau +\overline{\mu} \right)=\phi(\tau, z_1, z_2), 
\end{equation}
%for all $\lambda, \mu $ in $\mathcal{O}_K$.
and $\phi$ has a Fourier expansion of the form
\begin{equation}\label{hj3}
\phi(\tau, z_1, z_2)=\sum_{\substack{n \in \mathbb{Z}, r\in \mathcal{O}^{\#} \\  N(r)\le mn}} c(\phi; n, r)q^n\zeta_1^r\zeta_2^{\overline{r}},
\end{equation}
%where \[ \sigma(\epsilon)=\begin{cases} 
      %1 & ~~~~\text{if}~~~~\delta=+, \\
      %\epsilon^2 &~~~~ \text{if}~~~~ \delta=-,
   %\end{cases}
%\]
 where $q=e(z)$, $\zeta_1=e(z_1)$, $\zeta_2=e(z_2)$. We say that $\phi$ is a Hermitian Jacobi cusp form if in addition to the conditions \eqref{hj1}, \eqref{hj2} and \eqref{hj3}, $\phi$ also satisfies the condition that $c(\phi; n, r)=0$ whenever $mn=N(r)$ in the Fourier expansion given in \eqref{hj3}.
\end{defn}
We denote by  $HJ_{k, m}^{\delta}(\Gamma^J(\mathcal{O}))$ the finite dimensional vector space of all Hermitian Jacobi forms of weight $k$, index $m$ and parity $\delta$. 

%The vector space of Hermitian Jacobi forms of weight $k$ and index $m$ is defined by
%$$
%HJ_{k, m}(\Gamma^J(\mathcal{O}))=
%HJ_{k, m}^+(\Gamma^J(\mathcal{O}))\oplus HJ_{k, m}^-(\Gamma^J(\mathcal{O})).
%=\{(\phi^+,\phi^-)\mid \phi^+\in HJ_{k, m}^+(\Gamma^J(\mathcal{O}))
%, \phi^-\in HJ_{k, m}^-(\Gamma^J(\mathcal{O})) \}.
%$$
%\smallskip
\subsection{Jacobi forms and their relations with Hermitian Jacobi forms}
Consider the Jacobi group $\Gamma^1(\mathcal{O})=SL_2(\mathbb{Z})\ltimes \mathcal{O}^2$. A Jacobi form of weight $k$ and index $m$ on the group $\Gamma^1(\mathcal{O})$ satisfies the transformation properties \eqref{hj1} with $\epsilon=1$ and \eqref{hj2}, and it also has a Fourier expansion of the form given in \eqref{hj3}. We refer to \cite{das, sengupta} for more details on it. We denote by $J_{k, m}^1(\Gamma^1(\mathcal{O}))$ the vector space of  all Jacobi forms of weight $k$ and index $m$ on 
$\Gamma^1(\mathcal{O})$. We observe that
\begin{equation}\label{subset}
HJ_{k, m}^{\delta}(\Gamma^J(\mathcal{O})) \subset J_{k, m}^1(\Gamma^1(\mathcal{O})) \quad 
\text{for~each} ~~\delta \in \{+, -\}.
\end{equation}
%, \quad \quad \text{for} \quad \delta=+ ~~\text{or}~~ -
Given $f\in J_{k, m}^1(\Gamma^1(\mathcal{O}))$, one constructs a Hermitian Jacobi form of weight $k$, index $m$ and parity $\delta$ by using the averaging operator
$$
A:J_{k, m}^1(\Gamma^1(\mathcal{O}))\rightarrow HJ_{k, m}^{\delta} (\Gamma^J(\mathcal{O}))
$$
defined by 
\begin{equation}\label{averaging}
f\mapsto\sum_{\epsilon \in \mathcal{O}^{\times}}f\mid_{k, m, \delta}\epsilon I,
\end{equation}
where $I$ is the identity matrix.

The theory of Jacobi forms was developed by 
Eichler and Zagier \cite{zagier} who systematically studied  Jacobi forms of integer index. Later, Ziegler \cite{ziegler} introduced Jacobi forms of matrix index. Let $M$ be a symmetric, positive definite, half-integral $l \times l$ matrix with integral diagonal entries. Let $\Gamma^l:=SL_2(\mathbb{Z})\ltimes (\mathbb{Z}^{l} \times \mathbb{Z}^l)$ and let $U[V]=V^{t}UV$ for matrices $U$, $V$ of appropriate sizes.
\begin{defn}
A holomorphic function $\phi: \mathcal{H}\times \mathbb{C}^l \longrightarrow \mathbb{C}$ is a Jacobi form of weight $k$ and index $M$ if for each
$
\begin{pmatrix}
a & b \\
c & d
\end{pmatrix}
\in SL_2(\mathbb{Z})
$
we have
\begin{equation}\label{mi1}
\phi\mid_{k, M}(\tau, z_1, \cdots, z_l):=(c\tau+d)^{-k}e^{-2\pi i\frac{cM[z]}{c\tau+d}}
\phi\left(\frac{a\tau +b}{c\tau+d}, \frac{z_1}{c\tau+d},\cdots, \frac{z_l}{c\tau+d} \right)=\phi(\tau, z_1, \cdots, z_l),
\end{equation}
where $\tau\in \mathcal{H}$, $z=(z_1, z_2, \cdots, z_l)^{t}\in \mathbb{C}^l$,
\begin{equation}\label{mi2}
\phi\mid_M(\tau, z_1, \cdots, z_l):=e^{2\pi i(\tau M[\lambda]+2\lambda^{t}Mz)}\phi(\tau, z_1+\lambda_1\tau +\mu_1, \cdots, z_l+\lambda_l\tau+\mu_l)=\phi(\tau, z_1, \cdots, z_l), 
\end{equation}
where $\lambda=(\lambda_1, \lambda_2, \cdots, \lambda_l)^{t},  \mu=(\mu_1, \mu_2, \cdots, \mu_l)^{t}\in \mathbb{C}^l$
and $\phi$ has a Fourier expansion of the form 
\begin{equation}\label{mi3}
\phi(\tau, z_1,\cdots, z_l)=\sum_{\substack{0\le n\in \mathbb{Z}, r\in \mathbb{Z}^l \\ 4{\rm det}(M)n-M^{\#}[r]\ge 0}}c(\phi; n, r)q^n \zeta^r,
\end{equation}
where $q=e(\tau)$, $\zeta^r=e^{2\pi i r^{t}z}$ and $M^{\#}$ is the adjugate of $M$.
\end{defn}
We denote by $J_{k, M}(\Gamma^l)$ the complex vector space of Jacobi forms of weight $k$, matrix index $M$ on $\Gamma^l$.
%The space $J_{*, m}^1=\bigoplus J_{k, m}^1$ and $J_{*, m}^{\delta}=\bigoplus J_{k, m}^{\delta}$ for $\delta=\pm 1$ forms a module ov
We now prove an isomorphism which is the main tool in the proof of \thmref{hjf} in Section 3.
\begin{thm}\label{iso}
For an integer $m\ge 1$, let  $B$ denote the matrix 
$
\begin{pmatrix}
 m & 0 \\ 
 0 & m 
 \end{pmatrix}
 $. 
 Then the space $J_{k, m}^1(\Gamma^1(\mathcal{O}))$ is isomorphic to the space $J_{k, B}(\Gamma^2)$ as a vector space over $\mathbb{C}$.
\end{thm}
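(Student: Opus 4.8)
The plan is to write down an explicit linear change of the two elliptic variables that diagonalises the Hermitian index form $m z_1 z_2$ into the diagonal form $B[w]=m(w_1^2+w_2^2)$ of the matrix-index side, and then to check that this substitution matches the two transformation laws and the two Fourier expansions term by term. Concretely, I would define
\[
\Phi\colon J_{k,m}^1(\Gamma^1(\mathcal{O}))\longrightarrow J_{k,B}(\Gamma^2),\qquad (\Phi\phi)(\tau,w_1,w_2):=\phi(\tau,\,w_1+iw_2,\,w_1-iw_2),
\]
that is, the substitution $z_1=w_1+iw_2$, $z_2=w_1-iw_2$, whose inverse is $w_1=(z_1+z_2)/2$, $w_2=(z_1-z_2)/(2i)$. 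Since this is an invertible $\mathbb{C}$-linear change of the holomorphic variables, $\Phi$ is a $\mathbb{C}$-linear bijection between holomorphic functions on $\mathcal{H}\times\mathbb{C}^2$, with $\Phi^{-1}$ given by the backward substitution; it then suffices to verify that $\Phi$ carries \eqref{hj1} (with $\epsilon=1$), \eqref{hj2} and \eqref{hj3} to \eqref{mi1}, \eqref{mi2} and \eqref{mi3} with $M=B$, and vice versa.

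For the $SL_2(\mathbb{Z})$-action, the decisive identity is $m z_1 z_2=m(w_1^2+w_2^2)=B[w]$, so the exponential $e^{-2\pi i m c z_1z_2/(c\tau+d)}$ of \eqref{hj1} becomes precisely the factor $e^{-2\pi i c\,B[w]/(c\tau+d)}$ of \eqref{mi1}; because the substitution is homogeneous it commutes with the scaling $w_j\mapsto w_j/(c\tau+d)$, so the whole slash action is respected. For the lattice action, write $\lambda=\lambda_1+i\lambda_2$ and $\mu=\mu_1+i\mu_2$ with $\lambda_j,\mu_j\in\mathbb{Z}$; the Hermitian shift $(z_1,z_2)\mapsto(z_1+\lambda\tau+\mu,\,z_2+\overline{\lambda}\tau+\overline{\mu})$ turns into the integral shift $(w_1,w_2)\mapsto(w_1+\lambda_1\tau+\mu_1,\,w_2+\lambda_2\tau+\mu_2)$, and a one-line computation gives $m(\lambda\overline{\lambda}\tau+\overline{\lambda}z_1+\lambda z_2)=\tau\,B[\lambda']+2(\lambda')^{t}Bw$ for $\lambda'=(\lambda_1,\lambda_2)^t$, which is exactly the automorphy factor of \eqref{mi2}. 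As $\lambda,\mu$ run through $\mathcal{O}$ the vectors $\lambda',\mu'$ run through $\mathbb{Z}^2$, giving the bijection of lattices $\mathcal{O}^2\leftrightarrow\mathbb{Z}^2\times\mathbb{Z}^2$.

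It remains to match the Fourier supports. Writing $r=\tfrac{i}{2}\rho\in\mathcal{O}^{\#}$ with $\rho=\rho_1+i\rho_2\in\mathcal{O}$, the substitution converts the monomial $\zeta_1^r\zeta_2^{\overline{r}}=e^{2\pi i(r z_1+\overline{r}z_2)}$ into $e^{-2\pi i(\rho_2 w_1+\rho_1 w_2)}=\zeta^{s}$ with $s=(-\rho_2,-\rho_1)^t\in\mathbb{Z}^2$, which sets up a bijection $\mathcal{O}^{\#}\leftrightarrow\mathbb{Z}^2$. Under it the Hermitian condition $N(r)\le mn$ reads $\rho_1^2+\rho_2^2\le 4mn$; since $\det B=m^2$ and $B^{\#}=B$ give $B^{\#}[s]=m(s_1^2+s_2^2)=m(\rho_1^2+\rho_2^2)$, this is identical to the matrix-index condition $4\det(B)\,n-B^{\#}[s]\ge 0$. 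Hence $\Phi$ identifies \eqref{hj3} with \eqref{mi3}, so both $\Phi$ and $\Phi^{-1}$ preserve the defining conditions and $\Phi$ is the desired isomorphism.

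The only genuine idea, as opposed to bookkeeping, is the choice of the diagonalising substitution $z_1=w_1+iw_2$, $z_2=w_1-iw_2$; once it is fixed, every step reduces to the elementary identities above. The single point that warrants care is the interplay between the inverse different $\mathcal{O}^{\#}=\tfrac{i}{2}\mathcal{O}$ and the integral index lattice of $B$, namely the verification that the factor $\tfrac{i}{2}$ makes the exponents $\rho_2 w_1+\rho_1 w_2$ integral and that the norm inequality transforms exactly into the adjugate condition, which is where all the constants must line up.
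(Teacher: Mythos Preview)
Your proposal is correct and follows essentially the same route as the paper: you use the identical substitution $z_1=w_1+iw_2$, $z_2=w_1-iw_2$ (with the same explicit inverse), and your verification of the $SL_2(\mathbb{Z})$-action, the lattice action, and the Fourier support agrees with the paper's, only written out in somewhat more detail. Your parametrisation $r=\tfrac{i}{2}\rho$ with $s=(-\rho_2,-\rho_1)^t$ is just a relabelling of the paper's $r=\tfrac{\alpha}{2}+i\tfrac{\beta}{2}$ with $s=(\alpha,-\beta)^t$, and your observation that $B^{\#}=B$ reconciles the adjugate condition of \eqref{mi3} with the paper's statement.
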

\begin{proof} 
For $f(\tau, z_1, z_2)\in J_{k, m}^1(\Gamma^1(\mathcal{O}))$, define
$$
\hat{f}(\tau, z_1, z_2)=f(\tau, z_1+iz_2, z_1-iz_2).
$$
Using the transformation properties of $f$, one sees that $\hat{f}$ satisfies the transformation properties
\eqref{mi1}, \eqref{mi2}. Suppose that the Fourier expansion of $f$ is given by
$$
f(\tau, z_1, z_2)=\sum_{\substack{n \in \mathbb{Z}, r\in \mathcal{O}^{\#} \\  N(r)\le mn}} c(n, r)e(n\tau+rz_1+\overline{r}z_2).
$$
Then 
$$
\hat{f}(\tau, z_1, z_2)=f(\tau, z_1+iz_2, z_1-iz_2)
=\sum_{\substack{ n \in \mathbb{Z}, r\in \mathcal{O}^{\#}\\  N(r)\le mn}}c(n, r)e((z_1+iz_2)r+(z_1-iz_2)\overline{r}).
$$
Let $r=\frac{\alpha}{2}+i\frac{\beta}{2}$, where $\alpha , \beta \in \mathbb{Z}$. Then define
$s=(\alpha, -\beta)^t\in \mathbb{Z}^2$. The correspondence $r=\frac{\alpha}{2}+i\frac{\beta}{2} \mapsto
s=(\alpha, -\beta)^t$ from $\mathcal{O}^{\#}$ to $\mathbb{Z}^2$ is bijective. Therefore we have
$$
\hat{f}(\tau, z_1, z_2)=
\sum_{\substack{n \in \mathbb{Z}, r\in \mathcal{O}^{\#}\\ 4mn-|r|^2\ge 0}}c(n, r)e(n\tau+\alpha z_1-\beta z_2)
=\sum_{\substack{n \in \mathbb{Z}, s\in \mathbb{Z}^{2}\\4\text{det}(B)n-B[s]\ge 0}}
c(n, r)e(n\tau+az_1+bz_2).
$$
Thus $\hat{f}$ has a Fourier expansion of the form given in  \eqref{mi3}. Therefore the map
$$
i: J_{k, m}^1(\Gamma^1(\mathcal{O})) \mapsto J_{k, B}(\Gamma^2)
$$
defined by
$$
f(\tau, z_1, z_2) \mapsto f(\tau, z_1+iz_2, z_1-iz_2)
$$
is a well-defined linear map. Similarly one proves that the map
$$
j: J_{k, B}(\Gamma^2) \mapsto J_{k, m}^1(\Gamma^1(\mathcal{O}))
$$
defined by
$$
g(\tau, z_1, z_2)\mapsto g\left(\tau, \frac{z_1+z_2}{2}, \frac{z_1-z_2}{2i}\right)
$$
is a well-defined linear map. 
Now it can be easily checked that $j\circ i=I_1$ and $i\circ j=I_2$, where $I_1$ and $I_2$ are the identity maps on the spaces $J_{k, m}^1(\Gamma^1(\mathcal{O}))$ and $J_{k, B}(\Gamma^2)$ respectively. This proves the theorem.
\end{proof}

Let $M_k(SL_2(\mathbb{Z}))$ denote the vector space of all modular forms of weight $k$ on $SL_2(\mathbb{Z})$ and let $M_{*}(SL_2(\mathbb{Z}))=\bigoplus_k M_k(SL_2(\mathbb{Z}))$ be the graded ring of all modular forms on $SL_2(\mathbb{Z})$. Let 
$J_{*, m}^1(\Gamma^1(\mathcal{O}))=\bigoplus_k J_{k, m}^1(\Gamma^1(\mathcal{O}))$ and $J_{*, B}(\Gamma^2)=\bigoplus_k J_{k, B}(\Gamma^2)$. The spaces $J_{*, m}^1(\Gamma^1(\mathcal{O}))$ and  $J_{*, B}(\Gamma^2)$ are modules over $M_*(SL_2(\mathbb{Z}))$. For a ring $R\subseteq \mathbb{C}$, let $M_k(SL_2(\mathbb{Z}), R)$ denote the set of all modular forms of weight $k$ having all the Fourier coefficients in $R$ and let $M_{*}(SL_2(\mathbb{Z}), R)=\bigoplus_k M_k(SL_2(\mathbb{Z}), R)$. Let $HJ_{k, m}^{\delta}(\Gamma^J(\mathcal{O}), R)$ denote the set of all Hermitian Jacobi forms of weight $k$, index $m$ and parity $\delta$ having all the Fourier coefficients in $R$. Let $J_{k, m}^1(\Gamma^1(\mathcal{O}), R)$ denote the set of all Jacobi forms in $J_{k, m}^1(\Gamma^1(\mathcal{O}))$ having all the Fourier coefficients in $R$ and let $J_{*, m}^1(\Gamma^1(\mathcal{O}), R)=\bigoplus_k J_{k, m}^1(\Gamma^1(\mathcal{O}), R)$. Similarly let  $J_{k, B}(\Gamma^2, R)$ denote the set of all Jacobi forms in $J_{k, B}(\Gamma^2)$ having all the Fourier coefficients in $R$ and let 
$J_{*, B}(\Gamma^2, R)=\bigoplus J_{k, B}(\Gamma^2, R)$.
 Let $\mathbb{Z}_{(p)}$ be the localization of $\mathbb{Z}$ at the prime $p$. The ring 
 $\mathbb{Z}_{(p)}$ is called the ring of $p$-integral rationals.
With these notations we have two important and immediate consequences of \thmref{iso}.
 \begin{cor}
$J_{*, m}^1(\Gamma^1(\mathcal{O}))$ is isomorphic to $J_{*, B}(\Gamma^2)$ as modules over $M_{*}(SL_2(\mathbb{Z}))$.
\end{cor}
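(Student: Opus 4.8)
The plan is to promote the $\mathbb{C}$-linear isomorphism of \thmref{iso} to an isomorphism of graded $M_*(SL_2(\mathbb{Z}))$-modules by assembling the weight-by-weight maps and then checking that they respect the module action. First I would recall that the map $i$ constructed in the proof of \thmref{iso}, namely $f(\tau, z_1, z_2) \mapsto f(\tau, z_1 + iz_2, z_1 - iz_2)$, is a $\mathbb{C}$-linear isomorphism $J_{k, m}^1(\Gamma^1(\mathcal{O})) \to J_{k, B}(\Gamma^2)$ in each fixed weight $k$, with inverse $j$. Since this substitution affects only the elliptic variables and does not alter the weight, taking the direct sum over all $k$ yields a graded $\mathbb{C}$-linear isomorphism $i \colon J_{*, m}^1(\Gamma^1(\mathcal{O})) \to J_{*, B}(\Gamma^2)$, still with inverse $j$.

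It then remains to verify that $i$ is $M_*(SL_2(\mathbb{Z}))$-linear. I would first record how the module structure is defined: for $g \in M_l(SL_2(\mathbb{Z}))$, regarded as a function of $\tau$ alone, and $f \in J_{k, m}^1(\Gamma^1(\mathcal{O}))$, the product $(g \cdot f)(\tau, z_1, z_2) = g(\tau)\, f(\tau, z_1, z_2)$ lies in $J_{k+l, m}^1(\Gamma^1(\mathcal{O}))$, and the analogous statement holds for $J_{*, B}(\Gamma^2)$; this is the action referred to in the statement. The key observation is that the substitution defining $i$ touches only $z_1$ and $z_2$ and leaves $\tau$ untouched, so the factor $g(\tau)$ simply passes through $i$ unchanged.

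Concretely, I would compute
$$
i(g \cdot f)(\tau, z_1, z_2) = (g \cdot f)(\tau, z_1 + iz_2, z_1 - iz_2) = g(\tau)\, f(\tau, z_1 + iz_2, z_1 - iz_2) = g(\tau)\, i(f)(\tau, z_1, z_2),
$$
which shows $i(g \cdot f) = g \cdot i(f)$ for all homogeneous $g$, hence for all $g \in M_*(SL_2(\mathbb{Z}))$ by linearity. Combined with the graded bijectivity inherited from \thmref{iso}, this establishes that $i$ is an isomorphism of $M_*(SL_2(\mathbb{Z}))$-modules, proving the corollary. I do not anticipate any genuine obstacle: the only point requiring care is to note explicitly that the module action is multiplication by a function of $\tau$ alone, after which compatibility with $i$ is immediate precisely because $i$ acts only on the $z$-variables.
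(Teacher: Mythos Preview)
Your proposal is correct and matches the paper's approach: the paper states this corollary as an immediate consequence of \thmref{iso} without further proof, and your argument is precisely the natural fleshing-out of that claim, observing that the change of variables in \thmref{iso} preserves weight and commutes with multiplication by functions of $\tau$ alone.
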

%Since the isomorphism in \thmref{iso} does not affect the $p-$ integrability of  Fourier coefficients we have the following corollary.
\begin{cor}\label{isocor}
$J_{k, m}^1(\Gamma^1(\mathcal{O}), \mathbb{Z}_{(p)})$ is isomorphic to $J_{k, B}(\Gamma^2, \mathbb{Z}_{(p)})$ as modules over $\mathbb{Z}_{(p)}$. Moreover, $J_{*, m}^1(\Gamma^1(\mathcal{O}), \mathbb{Z}_{(p)})$ is isomorphic to $J_{*, B}(\Gamma^2, \mathbb{Z}_{(p)})$ as modules over $M_*(SL_2(\mathbb{Z}), \mathbb{Z}_{(p)})$.
\end{cor}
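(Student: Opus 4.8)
The plan is to show that the two linear maps $i$ and $j$ built in the proof of \thmref{iso} are in fact defined over an arbitrary coefficient ring, and in particular restrict to mutually inverse $\mathbb{Z}_{(p)}$-linear maps on the stated submodules. The crucial observation—already contained in the computation of \thmref{iso}—is that neither $i$ nor $j$ alters the \emph{values} of the Fourier coefficients; each merely relabels the indexing set through the bijection $r=\frac{\alpha}{2}+i\frac{\beta}{2}\leftrightarrow s=(\alpha,-\beta)^t$ between $\mathcal{O}^{\#}$ and $\mathbb{Z}^2$.

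First I would record this coefficient-preservation explicitly. For $f=\sum c(n,r)\,q^n\zeta_1^{r}\zeta_2^{\overline{r}}\in J_{k,m}^1(\Gamma^1(\mathcal{O}))$, the identity $r(z_1+iz_2)+\overline{r}(z_1-iz_2)=\alpha z_1-\beta z_2=s^{t}z$ established in \thmref{iso} shows that
$$
i(f)=\sum c(n,r)\,q^n\zeta^{s},
$$
so the coefficient of $q^n\zeta^{s}$ in $i(f)$ equals $c(n,r)$. Running the same substitution through $j$, and using $\tfrac{1}{2i}=-\tfrac{i}{2}$, one finds $s_1\tfrac{z_1+z_2}{2}+s_2\tfrac{z_1-z_2}{2i}=rz_1+\overline{r}z_2$, so $j$ likewise sends $\sum c(n,s)\,q^n\zeta^{s}$ to $\sum c(n,s)\,q^n\zeta_1^{r}\zeta_2^{\overline{r}}$ without changing any coefficient value. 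Consequently, for \emph{any} subring $R\subseteq\mathbb{C}$, the map $i$ carries $J_{k,m}^1(\Gamma^1(\mathcal{O}),R)$ into $J_{k,B}(\Gamma^2,R)$ and $j$ carries $J_{k,B}(\Gamma^2,R)$ into $J_{k,m}^1(\Gamma^1(\mathcal{O}),R)$; since $j\circ i=I_1$ and $i\circ j=I_2$ by \thmref{iso}, these restrictions are mutually inverse $R$-linear isomorphisms. Taking $R=\mathbb{Z}_{(p)}$ gives the first assertion.

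For the graded statement I would exploit that $i$ and $j$ act only on the elliptic variables $z_1,z_2$ and leave $\tau$ untouched. Hence if $\phi\in M_{k'}(SL_2(\mathbb{Z}),\mathbb{Z}_{(p)})$ and $f\in J_{k,m}^1(\Gamma^1(\mathcal{O}),\mathbb{Z}_{(p)})$, then
$$
i(\phi\,f)(\tau,z_1,z_2)=\phi(\tau)\,f(\tau,z_1+iz_2,z_1-iz_2)=\phi(\tau)\,i(f)(\tau,z_1,z_2),
$$
so that $i(\phi\,f)=\phi\,i(f)$, and symmetrically for $j$. Summing the weight-by-weight isomorphisms over $k$ then yields an isomorphism $J_{*,m}^1(\Gamma^1(\mathcal{O}),\mathbb{Z}_{(p)})\cong J_{*,B}(\Gamma^2,\mathbb{Z}_{(p)})$ compatible with multiplication by $M_*(SL_2(\mathbb{Z}),\mathbb{Z}_{(p)})$, i.e.\ an isomorphism of modules over that ring.

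I expect no serious obstacle: the whole point is that the isomorphism of \thmref{iso} is coefficient-blind, so everything descends to the $\mathbb{Z}_{(p)}$-structures. The only step warranting a genuine check is confirming that $j$, which carries the apparent factor $\frac{1}{2i}$, really returns $\mathbb{Z}_{(p)}$-coefficient input to $\mathbb{Z}_{(p)}$-coefficient output; the identity $\frac{1}{2i}=-\frac{i}{2}$ settles this by recombining the denominators into the integer pair $(\alpha,-\beta)$ that governs the exponent, leaving the coefficient values themselves untouched.
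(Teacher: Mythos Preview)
Your proposal is correct and is exactly the natural elaboration of what the paper leaves implicit: the paper states this corollary without proof, labeling it an ``immediate consequence'' of \thmref{iso}, and your argument makes explicit the reason why---namely that the maps $i$ and $j$ of \thmref{iso} merely relabel the Fourier indices via the bijection $r\leftrightarrow s$ without touching the coefficient values, so they restrict to mutually inverse $R$-linear maps over any subring $R\subseteq\mathbb{C}$, and commute with multiplication by functions of $\tau$ alone. Your final remark about the factor $\tfrac{1}{2i}$ is a slight red herring (the $\tfrac{1}{2i}$ lives in the variable substitution, not in the coefficients, so there is nothing to worry about once the exponent computation is done), but the resolution you give is fine.
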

Let $\phi \in HJ_{k, m}^\delta(\Gamma^J(\mathcal{O}))$. Suppose that the Fourier expansion of $\phi$ is given by
$$
\phi(\tau, z_1, z_2)=\sum_{\substack{n \in \mathbb{Z}, r\in \mathcal{O}^{\#} \\  N(r)\le mn}} c(\phi; n, r)q^n\zeta_1^r\zeta_2^{\overline{r}}.
$$
For $\rho\in \mathcal{O}$ and $z\in \mathbb{C}$, define
$$
\phi[\rho](\tau, z)=\phi(\tau, \rho z, \overline{\rho} z).
$$
Using the transformation properties and the Fourier expansion of $\phi$, we observe that
$\phi[\rho](\tau, z)\in J_{k, N(\rho)m}(\Gamma^1)$. Moreover, the Fourier expansion of 
$\phi[\rho]$ is given by
$$
\phi[\rho](\tau, z)=\sum_{\substack{n\in \mathbb{Z}, r\in \mathcal{O}^{\#} \\(2 \Re(\rho r))^2\le 4mn}}c(\phi; n, r)q^n \zeta^{2\Re(\rho r)}
=\sum_{\substack{n\in \mathbb{Z}, a\in \mathbb{Z} \\ a^2\le 4mn}}c(\phi[\rho]; n, a)q^n 
\zeta^a,
$$
where $\Re(\rho r)$ is the real part of $\rho r$, $\zeta= e(z)$ and 
\begin{equation}\label{summation}
c(\phi[\rho]; n, a)=\sum_{\substack{r \in \mathcal{O}^{\#}, N(r)\le mn \\ 2\Re(\rho r)=a}}c(\phi; n, r).
\end{equation}
Therefore if $\phi \in HJ^{\delta}_{k, m}(\Gamma^J(\mathcal{O}), \mathbb{Z}_{(p)})$, then $\phi[\rho]\in J_{k, N(\rho)m}(\Gamma^1, \mathbb{Z}_{(p)})$.
We next prove the following result which will be crucially used in the proof of \thmref{hermitiancase} in Section 3. This result is a generalization of a result of Raum and Richter \cite[Proposition 2.5]{richter} to the case of Hermitian Jacobi forms.
\begin{prop}\label{non-zero}
Let $\phi\in HJ^{\delta}_{k, m}(\Gamma^J(\mathcal{O}))$. If $0\le n_0 \in \mathbb{Z}$ is fixed, then there exists an element $\rho \in \mathcal{O}$ such that for all $n\le n_0$ and $r\in \mathcal{O}^{\#}$ with $N(r)\le mn$, we have
\begin{equation}\label{cphi}
c(\phi[\rho]; n, 2\Re(\rho r))=c(\phi; n, r).
\end{equation}
Moreover, if $(\phi_k)_k$ is a finite family of Hermitian Jacobi forms with $\phi_k\in HJ^{\delta_k}_{k, m}(\Gamma^J(\mathcal{O}), \mathbb{Z}_{(p)})$ and $\phi_k \not \equiv 0 \pmod p$ for all $k$, then there exists an element $\rho \in \mathcal{O}$ such that $\phi_k[\rho] \not \equiv 0 \pmod p$ for all $k$.
\end{prop}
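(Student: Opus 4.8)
The plan is to reduce the first assertion to an elementary statement about separating a finite set of lattice points by a linear functional, and then to deduce the ``moreover'' part by observing that the element $\rho$ produced can be chosen independently of the particular form.

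First I would unwind what \eqref{cphi} is asking. By the formula \eqref{summation},
$$
c(\phi[\rho]; n, 2\Re(\rho r_0)) = \sum_{\substack{r \in \mathcal{O}^{\#}, N(r) \le mn \\ 2\Re(\rho r) = 2\Re(\rho r_0)}} c(\phi; n, r),
$$
so \eqref{cphi} holds as soon as the map $r \mapsto 2\Re(\rho r)$ is injective on the finite set $S_{n_0} := \{ r \in \mathcal{O}^{\#} : N(r) \le mn_0 \}$ (which contains all $r$ relevant for $n \le n_0$). Thus it suffices to choose $\rho$ making this restriction injective.

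Next I would make this concrete using the parametrisation already used in the proof of \thmref{iso}. Writing $r = \frac{\alpha}{2} + i\frac{\beta}{2}$ with $\alpha, \beta \in \mathbb{Z}$ and $\rho = a + bi$ with $a, b \in \mathbb{Z}$, a direct computation gives $2\Re(\rho r) = a\alpha - b\beta$, while $N(r) = \frac{\alpha^2 + \beta^2}{4}$. Hence $S_{n_0}$ corresponds to the finite set of pairs $(\alpha, \beta) \in \mathbb{Z}^2$ with $\alpha^2 + \beta^2 \le 4mn_0$, and I need integers $a, b$ for which the linear functional $(\alpha, \beta) \mapsto a\alpha - b\beta$ is injective on this set. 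This is routine: if $T$ is an integer with $|\alpha|, |\beta| \le T$ on $S_{n_0}$ (for instance $T = \lfloor 2\sqrt{mn_0} \rfloor$), then taking $a = 1$, $b = 2T+1$, i.e. $\rho = 1 + (2T+1)i$, forces any coincidence $a\alpha_1 - b\beta_1 = a\alpha_2 - b\beta_2$ to give $\alpha_1 - \alpha_2 = (2T+1)(\beta_1 - \beta_2)$; since the left side has absolute value at most $2T$ and the right side is a multiple of $2T+1$, both vanish and $(\alpha_1, \beta_1) = (\alpha_2, \beta_2)$. (Equivalently, one avoids the finitely many ``bad'' rank-one sublattices on which some nonzero difference is killed.) This proves the first assertion.

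Finally, for the ``moreover'' part the key observation is that the $\rho$ just constructed depends only on $m$ and $n_0$, not on $\phi$. For each $k$, since $\phi_k \nequiv 0 \pmod p$ there is a pair $(n_k, r_k)$ with $c(\phi_k; n_k, r_k) \nequiv 0 \pmod p$. Setting $n_0 = \max_k n_k$ and taking the corresponding $\rho$, the first assertion applies simultaneously to every member of the (finite) family, since they all have index $m$; in particular
$$
c(\phi_k[\rho]; n_k, 2\Re(\rho r_k)) = c(\phi_k; n_k, r_k) \nequiv 0 \pmod p,
$$
so $\phi_k[\rho] \nequiv 0 \pmod p$ for all $k$. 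The only point to watch is this uniformity in $\phi$; there is no genuine analytic obstacle here, the content being the elementary separation argument together with the fact that injectivity of $r \mapsto 2\Re(\rho r)$ is a property of $\rho$, $m$ and $n_0$ alone.
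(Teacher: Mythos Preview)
Your proof is correct and follows essentially the same approach as the paper: both reduce \eqref{cphi} via \eqref{summation} to the injectivity of $r\mapsto 2\Re(\rho r)$ on the finite set $\{r\in\mathcal{O}^{\#}:N(r)\le mn_0\}$, and both achieve this by choosing $\rho=1+ci$ with $c$ large compared to the coordinate bound (the paper takes $c=4b$ with $b>\max|a_i|$, you take $c=2T+1$). The ``moreover'' part is likewise handled identically, by choosing $n_0$ to dominate witnesses $n_k$ for each $\phi_k$ and exploiting that $\rho$ depends only on $m$ and $n_0$.
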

\begin{proof}
Choose an integer $b$ such that
$$
b > \mbox{max} \Big\{ |a_i|~\mid r=\frac{a_1}{2}+\frac{a_2}{2}i \in \mathcal{O}^{\#},~ N(r)\le mn_0 \Big\}.
$$
Let $\rho=1+4bi$. Assume that $r_1, r_2\in \mathcal{O}^{\#}$ and $n>0$ is an integer such that
$n\le n_0$ and $N(r_i)\le mn$ for $i=1,2$.
We first prove that
$2\Re(\rho r_1 )=2\Re(\rho r_2)$ if and only if $r_1=r_2$. Then by \eqref{summation}, \eqref{cphi} follows. It is trivial to see that if $r_1=r_2$ then $2\Re(\rho r_1 )=2\Re(\rho r_2)$. Conversely assume that
$2\Re(\rho r_1 )=2\Re(\rho r_2)$. Let
$$
r_1=\frac{a_1}{2}+\frac{a_2}{2}i~~~\mbox{and}~~~r_2=\frac{b_1}{2}+\frac{b_2}{2}i,
$$
where $a_1, a_2, b_1, b_2$ are integers. Then the statement $2\Re(\rho r_1 )=2\Re(\rho r_2)$ implies $a_1-b_1=4b(a_2-b_2)$. Since $N(r_i)\le mn_0$ for $i=1,2$, we then obtain
$$
|a_2-b_2|=\frac{1}{4b}|a_1-b_1| \le \frac{1}{2}.
$$
Therefore we deduce that $r_1=r_2$. To prove the second assertion of the proposition, assume that
$\phi_k \not\equiv 0 \pmod{p}$ for all $k$. For each $k$, let $n_k$ be the smallest integer such that there exists $r_k\in \mathcal{O}^{\#}$ with $c(\phi_k; n_k, r_k) \not \equiv 0 \pmod p$. Choose an integer $n_0$ such that $n_0>\mbox{max}\{n_k\}$. Then by the first assertion of this proposition, there exists $\rho\in \mathcal{O}$ such that for all $n \le n_0$ and $r\in \mathcal{O}^{\#}$ satisfying $N(r)\le mn$ we have 
$$
c(\phi_k[\rho]; n, 2\Re(\rho r))=c(\phi_k; n, r)
$$
for each $k$. In particular,  we have $c(\phi_k[\rho]; n_k , 2\Re(\rho r_k) )\not \equiv 0 \pmod p$ for each $k$. Hence $\phi_k[\rho]\not \equiv 0 \pmod p$ for all $k$.
\end{proof}

\subsection{Heat operator}
For any holomorphic function $\phi : \mathcal{H}\times  \mathbb{C}^2 \longrightarrow \mathbb{C}$,
the heat operator 
$$
L_m:=-\frac{1}{\pi^2}\left(2\pi i m \frac{\partial}{\partial \tau}-\frac{\partial^2}{\partial z_1 \partial z_2} \right)
$$
acts on $\phi$.
The following lemma gives the actions of $L_m$ on the spaces $J_{k, m}^1(\Gamma^1(\mathcal{O}))$ and  $HJ_{k, m}^{\delta}(\Gamma^J(\mathcal{O}))$. For a proof of the lemma we refer to 
\cite[Lemma 5.1]{senathesis}.
\begin{lem}\label{heatop}
Let $\phi: \mathcal{H} \times \mathbb{C}^2 \longrightarrow \mathbb{C}$ be a holomorphic function. 
Define
\begin{equation}\label{heatoperator}
\hat{\phi}=L_m(\phi)-\frac{(k-1)m}{3}E_2\phi,
\end{equation}
where $E_2$ is the Eisenstein series of weight $k$ on $SL_2(\mathbb{Z})$.
Then
\begin{itemize}
\item if $\phi \in J_{k, m}^1(\Gamma^1(\mathcal{O}))$ then $\hat{\phi}\in J_{k+2, m}^1(\Gamma^1(\mathcal{O}))$;
\item if $\phi \in HJ_{k, m}^{\delta}(\Gamma^J(\mathcal{O}))$ then $\hat{\phi}\in HJ_{k+2, m}^{-\delta}(\Gamma^J(\mathcal{O}))$.
\end{itemize}
\end{lem}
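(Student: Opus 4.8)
The plan is to verify directly that $\hat{\phi}$ satisfies each of the defining conditions \eqref{hj1}, \eqref{hj2}, \eqref{hj3} of a (Hermitian) Jacobi form of weight $k+2$ and index $m$, the only genuine interaction being that between $L_m$ and the automorphy factors. First I would record how $L_m$ acts on Fourier expansions: applied to a single exponential $e(n\tau+rz_1+\overline{r}z_2)$ it multiplies it by $4(mn-N(r))$, so that $L_m\phi=\sum 4(mn-N(r))\,c(\phi;n,r)\,q^n\zeta_1^r\zeta_2^{\overline r}$. Since $E_2=1+\sum_{\ell\ge 1}b_\ell q^\ell$ carries only non-negative powers of $q$, both $L_m\phi$ and $E_2\phi$ remain supported on the region $N(r)\le mn$, so $\hat{\phi}$ has a Fourier expansion of the shape \eqref{hj3}; holomorphy is immediate because $L_m$ uses only holomorphic derivatives and $E_2$ is holomorphic.

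The elliptic (index) condition \eqref{hj2} is handled by the commutation relation $L_m(\phi\mid_m[\lambda,\mu])=(L_m\phi)\mid_m[\lambda,\mu]$, valid for all $\lambda,\mu$. This is a direct chain-rule computation, and it is precisely what forces the balance between the coefficients of $\partial_\tau$ and $\partial_{z_1}\partial_{z_2}$ in $L_m$. Since $E_2$ depends only on $\tau$, one also has $(E_2\phi)\mid_m[\lambda,\mu]=E_2\,(\phi\mid_m[\lambda,\mu])$, so if $\phi$ obeys \eqref{hj2} then so does $\hat{\phi}$.

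The main work is the modular transformation. For $M=\begin{pmatrix}a&b\\ c&d\end{pmatrix}\in SL_2(\mathbb{Z})$ I would compute, for an arbitrary holomorphic $\phi$ obeying \eqref{hj2}, the commutator of $L_m$ with the weight-$k$ slash. Differentiating the automorphy factor $(c\tau+d)^{-k}e^{-2\pi imcz_1z_2/(c\tau+d)}$ and collecting the terms carrying a factor $c/(c\tau+d)$ yields
$$
(L_m\phi)\mid_{k+2,m}M=L_m(\phi\mid_{k,m}M)+\frac{2(k-1)m}{\pi i}\cdot\frac{c}{c\tau+d}\,(\phi\mid_{k,m}M).
$$
This identity, with its careful tracking of the $c/(c\tau+d)$ terms, is the main obstacle; everything else is bookkeeping. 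Combining it with the quasimodularity $(c\tau+d)^{-2}E_2(M\tau)=E_2(\tau)+\tfrac{12}{2\pi i}\,c/(c\tau+d)$ and specializing to a genuine Jacobi form, where $\phi\mid_{k,m}M=\phi$, one checks that the correction $-\tfrac{(k-1)m}{3}E_2\phi$ in \eqref{heatoperator} contributes exactly $-\tfrac{2(k-1)m}{\pi i}\,c/(c\tau+d)\,\phi$, cancelling the defect and giving $\hat{\phi}\mid_{k+2,m}M=\hat{\phi}$. This settles the first bullet. (An alternative would be to transport the statement to the matrix-index forms $J_{k,B}(\Gamma^2)$ via \thmref{iso} and appeal to the heat-operator theory there, but I prefer the direct computation, which keeps the Hermitian variables explicit and makes the parity change transparent.)

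For the Hermitian case it remains to treat the units $\epsilon\in\mathcal{O}^\times$ and to identify the new parity. Since $\Gamma(\mathcal{O})$ is generated by $SL_2(\mathbb{Z})$ together with the scalars $\epsilon I$, and the modular part was just handled, I only need \eqref{hj1} with $M=I$. Writing $F(\tau,z_1,z_2)=\phi(\tau,\epsilon z_1,\overline\epsilon z_2)$ and differentiating, the key point is that $\partial_{z_1}\partial_{z_2}$ is invariant under the scaling $z_1\mapsto\epsilon z_1,\ z_2\mapsto\overline\epsilon z_2$, because it picks up the factor $\epsilon\overline\epsilon=N(\epsilon)=1$. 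Hence $(L_m\phi)(\tau,\epsilon z_1,\overline\epsilon z_2)=\sigma(\epsilon)^{-1}\epsilon^{k}(L_m\phi)(\tau,z_1,z_2)$, and the same factor governs $E_2\phi$, so $\hat{\phi}\mid_{k+2,m,\delta'}\epsilon I=\sigma'(\epsilon)\sigma(\epsilon)^{-1}\epsilon^{-2}\hat{\phi}$. Invariance forces $\sigma'(\epsilon)=\sigma(\epsilon)\epsilon^2$, which is exactly the relation $\sigma_{-\delta}=\sigma_\delta\cdot\epsilon^2$ (using $\epsilon^4=1$). Thus the parity flips to $-\delta$, giving $\hat{\phi}\in HJ^{-\delta}_{k+2,m}(\Gamma^J(\mathcal{O}))$ and completing the second bullet.
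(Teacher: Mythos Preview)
The paper does not supply its own proof of this lemma; it simply refers the reader to \cite[Lemma~5.1]{senathesis}. Your direct verification is correct and is exactly the standard route one expects such a reference to contain: compute how $L_m$ interacts with each of the three defining conditions, isolate the $c/(c\tau+d)$ defect in the modular law, and cancel it with the quasimodular correction from $E_2$. Your commutator identity
\[
(L_m\phi)\mid_{k+2,m}M \;=\; L_m(\phi\mid_{k,m}M)+\frac{2(k-1)m}{\pi i}\,\frac{c}{c\tau+d}\,(\phi\mid_{k,m}M)
\]
is correct (a chain-rule computation on $j^{-k}e^{-2\pi i m c z_1z_2/j}\,\phi(M\tau,z_1/j,z_2/j)$ with $j=c\tau+d$ confirms the coefficient $2(k-1)m/\pi i$), and together with $(c\tau+d)^{-2}E_2(M\tau)=E_2(\tau)+\tfrac{12}{2\pi i}\,c/(c\tau+d)$ it gives the asserted cancellation for the choice of constant $\tfrac{(k-1)m}{3}$ in \eqref{heatoperator}. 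The parity argument is also correct: since $\partial_{z_1}\partial_{z_2}$ is invariant under $(z_1,z_2)\mapsto(\epsilon z_1,\overline{\epsilon}z_2)$, the extra $\epsilon^{-2}$ in weight $k+2$ forces $\sigma'(\epsilon)=\sigma(\epsilon)\epsilon^{2}$, which is precisely the flip $\delta\mapsto -\delta$. One small caveat: the support argument for \eqref{hj3} should also note that $4(mn-N(r))\ge 0$ keeps the coefficient of $L_m\phi$ finite and that multiplying by $E_2$ can only increase $n$, so the condition $N(r)\le mn$ is preserved termwise; you state this, but it is worth making explicit that no negative-$n$ terms can appear.
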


\section{Hermitian Jacobi forms modulo $p$}
Throughout this paper we assume that $p\ge 5$ is a prime and $\mathbb{F}_p$ is the finite field with $p$ elements. Suppose that $\phi \in HJ^{\delta}_{k, m}(\Gamma^J(\mathcal{O}), \mathbb{Z}_{(p)})$ and its Fourier expansion is given by
$$
\phi(\tau, z_1, z_2)=\sum_{\substack{n \in \mathbb{Z}, r\in \mathcal{O}^{\#} \\  N(r)\le mn}} c(\phi; n, r)q^n\zeta_1^r\zeta_2^{\overline{r}}. 
$$ 
The reduction $\overline{\phi}$ of $\phi$ modulo a prime $p$ is defined by
$$
\overline{\phi}(\tau, z_1, z_2)=\sum_{\substack{n \in \mathbb{Z}, r\in \mathcal{O}^{\#} \\  N(r)\le mn}} \overline{c}(\phi; n, r)q^n\zeta_1^r\zeta_2^{\overline{r}},
$$
where $\overline{c}(\phi; n, r)$ is the reduction of $c(\phi; n, r)$ modulo $p\mathbb{Z}_{(p)}$ (also written as $c(\phi; n, r)$ modulo $p$). We define
$$
HJ^{\delta}_{k, m}(\Gamma^J(\mathcal{O}), \mathbb{F}_{p})=\{ \overline{\phi}\mid \phi \in HJ^{\delta}_{k, m}(\Gamma^J(\mathcal{O}), \mathbb{Z}_{(p)})\}.
$$
The filtration of $\phi$ modulo $p$ is defined by
$$
\Omega(\phi)=\text{inf}\{k \mid \overline{\phi}  \in HJ^{\delta}_{k, m}(\Gamma^J(\mathcal{O}), \mathbb{F}_p)~\mbox{for}~\mbox{some}~ \delta \}.
$$
Similarly we  define
$$
J_{k, B}(\Gamma^2, \mathbb{F}_p)=\{\overline{\phi}\mid\phi \in J_{k, B}(\Gamma^2, \mathbb{Z}_{(p)})\}
$$
and
$$
J_{k, m}^1(\Gamma^1(\mathcal{O}), \mathbb{F}_p)=\{\overline{\phi}\mid \phi \in 
J_{k, m}^1(\Gamma^1(\mathcal{O}), \mathbb{Z}_{(p)}) \}.
$$
For $\phi \in J_{k, m}^1(\Gamma^1(\mathcal{O}), \mathbb{Z}_{(p)})$ we define its  filtration modulo $p$ by
$$
\omega(\phi)=\text{inf}\{k\mid \overline{\phi} \in J_{k, m}^1(\Gamma^1(\mathcal{O}), \mathbb{F}_p)\}.
$$
The next result is an extension of a result of Sofer \cite{sofer} on Jacobi forms to Hermitian Jacobi forms.

\begin{thm}\label{hermitiancase}
Suppose that $\phi \in HJ^{\delta_k}_{k, m}(\Gamma^J(\mathcal{O}), \mathbb{Z}_{(p)})$ and $\psi \in HJ^{\delta_{k'}}_{k', m'}(\Gamma^J(\mathcal{O}), \mathbb{Z}_{(p)})$ such that $0\not\equiv \phi \equiv \psi \pmod p$. Then $m=m'$ and $k\equiv k' \pmod {(p-1)}$. Moreover, if $m$ is fixed and $(\phi_k)_k$ is a finite family of Hermitian Jacobi forms with 
$\phi_k\in HJ^{\delta_k}_{k, m}(\Gamma^J(\mathcal{O}), \mathbb{Z}_{(p)})$ and $\sum_{k}\phi_k \equiv 0 \pmod p$, then for each $a\in \mathbb{Z}/(p-1)\mathbb{Z}$ we have
$$
\sum_{k\in a+(p-1)\mathbb{Z}} \phi_k \equiv 0 \pmod p.
$$
\end{thm}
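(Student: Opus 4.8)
The plan is to reduce the entire statement to the corresponding result of Sofer \cite{sofer} for classical (one variable, scalar index) Jacobi forms, using the slicing operator $\phi\mapsto\phi[\rho]$ introduced just before \propref{non-zero}. Recall that for $\rho\in\mathcal{O}$ this operator sends $HJ^{\delta}_{k,m}(\Gamma^J(\mathcal{O}),\mathbb{Z}_{(p)})$ into $J_{k,N(\rho)m}(\Gamma^1,\mathbb{Z}_{(p)})$, is $\mathbb{Z}_{(p)}$-linear, and acts on Fourier coefficients by the summation formula \eqref{summation}; in particular it commutes with reduction modulo $p$ and carries congruences to congruences. The essential feature supplied by \propref{non-zero} is that, for any prescribed bound $n_0$, one can choose $\rho$ so that the map $r\mapsto 2\Re(\rho r)$ is injective on $\{r\in\mathcal{O}^{\#}:N(r)\le mn_0\}$, whence $[\rho]$ preserves each individual Fourier coefficient up to order $n_0$. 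Since this injectivity depends only on $m$ and $n_0$ (not on the particular form), the same $\rho$ serves simultaneously for any finite collection of index $m$ forms and for their finite linear combinations.

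For the first assertion, suppose $0\nequiv\phi\equiv\psi\pmod p$. Applying the second part of \propref{non-zero} to the one element family $\{\phi\}$, choose $\rho$ with $\phi[\rho]\nequiv 0\pmod p$. Because $[\rho]$ is coefficient-wise and $\phi\equiv\psi\pmod p$, we get $\phi[\rho]\equiv\psi[\rho]\pmod p$, so both are nonzero classical Jacobi forms, of weights $k,k'$ and indices $N(\rho)m,N(\rho)m'$ respectively, that are congruent modulo $p$. Sofer's congruence theorem for classical Jacobi forms then yields $N(\rho)m=N(\rho)m'$, hence $m=m'$, together with $k\equiv k'\pmod{(p-1)}$.

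For the \emph{moreover} part, fix $m$ and a finite family $(\phi_k)_k$ with $\sum_k\phi_k\equiv 0\pmod p$; discarding the terms already $\equiv 0$, we may assume every $\phi_k\nequiv 0$. Fix an arbitrary $n_0\ge 0$ and, as above, choose $\rho$ so that $[\rho]$ preserves all Fourier coefficients of index $m$ forms up to order $n_0$. Each $\phi_k[\rho]$ lies in the single space $J_{k,N(\rho)m}(\Gamma^1,\mathbb{Z}_{(p)})$, and by linearity $\sum_k\phi_k[\rho]=\bigl(\sum_k\phi_k\bigr)[\rho]\equiv 0\pmod p$. Applying the weight decomposition half of Sofer's theorem at the fixed index $N(\rho)m$, we obtain $\sum_{k\in a+(p-1)\mathbb{Z}}\phi_k[\rho]\equiv 0\pmod p$ for every residue class $a$. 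Transporting this back through the coefficient preservation, every Fourier coefficient of $\sum_{k\in a+(p-1)\mathbb{Z}}\phi_k$ of order at most $n_0$ vanishes modulo $p$; as $n_0$ was arbitrary, all its Fourier coefficients vanish, giving $\sum_{k\in a+(p-1)\mathbb{Z}}\phi_k\equiv 0\pmod p$.

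The main obstacle, and the only place requiring real care, is that no single $\rho$ can make $r\mapsto 2\Re(\rho r)$ globally injective on $\mathcal{O}^{\#}$ — the source is two dimensional while the target is one dimensional — so $[\rho]$ preserves coefficients only up to the finite order $n_0$ built into the choice of $\rho$. Consequently the reduction to Sofer must be carried out order by order, with $\rho$ depending on $n_0$, and the conclusion is obtained only in the limit $n_0\to\infty$. The remaining points — that $[\rho]$ is linear and coefficient-wise, that the injectivity furnished by \propref{non-zero} is uniform over forms of a given index and passes to finite sums, and that the sliced forms all share the common index $N(\rho)m$ so that Sofer's fixed index statement applies — are routine once this limiting scheme is in place.
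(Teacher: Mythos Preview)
Your argument is correct and follows essentially the same strategy as the paper: reduce to the known result for classical one-variable Jacobi forms via the slicing operator $\phi\mapsto\phi[\rho]$, and use \propref{non-zero} to transport the conclusion back coefficient by coefficient. Two small points of comparison are worth noting. First, for the equality $m=m'$ the paper does not slice at all: it applies the lattice-translation invariance \eqref{hj2} directly to the congruence $\phi\equiv\psi$, obtaining $(q^{|\lambda|^2}\zeta_1^{\overline\lambda}\zeta_2^{\lambda})^{-m}\phi\equiv(q^{|\lambda|^2}\zeta_1^{\overline\lambda}\zeta_2^{\lambda})^{-m'}\psi\pmod p$, which forces $m=m'$ immediately; your route through $\phi[\rho]$ and the classical index-rigidity also works but adds a layer. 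Second, the ``weight decomposition half'' you invoke for classical Jacobi forms---that $\sum_k f_k\equiv 0$ forces each residue-class sub-sum to vanish---is not stated in Sofer \cite{sofer} (she proves only the two-form version $k\equiv k'$); the paper cites Raum--Richter \cite[Proposition 2.6]{richter} for this, and you should do the same or derive it yourself from the structure theory.
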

 \begin{proof}
 We use the idea of the proof of \cite[Lemma 2.1]{sofer} to prove that $m=m'$.
Suppose that $\lambda, \mu \in \mathcal{O}^{\#}$ with $\lambda \not = 0$. Replacing $z_1$ by $z_1+\lambda \tau +\mu$, $z_2$ by $z_2+\overline{\lambda}\tau +\overline{\mu}$ and using transformation property \eqref{hj2} of Hermitian Jacobi forms on the  congruence $\phi \equiv \psi \pmod p$, we have 
\begin{equation}\label{psi}
(q^{|\lambda|^2}\zeta_1^{\overline{\lambda}}\zeta_2^{\lambda})^{-m}\phi \equiv (q^{|\lambda|^2}\zeta_1^{\overline{\lambda}}\zeta_2^{\lambda})^{-m'}\psi \pmod p.
\end{equation}
Therefore we have 
$$
(q^{|\lambda|^2}\zeta_1^{\overline{\lambda}}\zeta_2^{\lambda})^{-m}\phi \equiv (q^{|\lambda|^2}\zeta_1^{\overline{\lambda}}\zeta_2^{\lambda})^{-m'}\phi \pmod p,
$$ 
for every $\lambda \in \mathcal{O}^{\#}$ and hence $m=m'$. We observe that the statement
$k\equiv k' \pmod {(p-1)}$ follows from the second assertion of the theorem. Therefore we need only prove the second assertion of the theorem. We follow the idea of Raum and Richter 
\cite[Proposition 2.6]{richter} to prove the second assertion.
Let $m$ be fixed and let $\phi_k \in HJ^{\delta_k}_{k, m}(\Gamma^J(\mathcal{O}), \mathbb{Z}_{(p)})$ be such that $\sum_{k}\phi_k \equiv 0 \pmod p$. 
Then for any $s \in \mathcal{O}$ we have $ \phi_k[s] \in J_{k, N(s)m }(\Gamma^1, \mathbb{Z}_{(p)})$ and
$$
\sum_{k}\phi_k[s] \equiv 0 \pmod p.
$$
Then by \cite[Proposition 2.6]{richter} we have 
\begin{equation}\label{s}
\sum_{k\in a+(p-1)\mathbb{Z}}\phi_k[s]\equiv 0 \pmod p.
\end{equation}
If $0\le n_0 \in \mathbb{Z}$ is fixed, then by \propref{non-zero}, there exists an $\rho \in \mathcal{O}$ such that for all $n\le n_0$ and $r\in \mathcal{O}^{\#}$ with 
$N(r)\le mn$, we have
%\begin{equation}\label{cphi}
$c(\phi[\rho]; n, 2\Re(\rho r))=c(\phi; n, r).$
%\end{equation}
Therefore by \eqref{s}, for arbitrary $n$ and $r$ with $r\in \mathcal{O}^{\#}$ and $N(r)\le mn$, we have
$$
\sum_{k\in a+(p-1)\mathbb{Z}}c(\phi_k; n, r)\equiv 0\pmod p
$$
and hence we have 
$$
\sum_{k\in a+(p-1)\mathbb{Z}}\phi_k \equiv 0 \pmod p.
$$
\end{proof}

\begin{rmk}\label{jr}
We observe that an analogous result as \thmref{hermitiancase} for Jacobi forms on 
$\Gamma^1(\mathcal{O})$ can be proved similarly. One may either prove in a similar way as \thmref{hermitiancase} or use the isomorphism of \thmref{iso} and \cite[Proposition 2.6]{richter} to prove an analogous result for Jacobi forms on $\Gamma^1(\mathcal{O})$. In particular, if $f\in J^1_{k, m}(\Gamma^1(\mathcal{O}),\mathbb{Z}_{(p)})$ and 
$g \in J^1_{k', m}(\Gamma^1(\mathcal{O}), \mathbb{Z}_{(p)})$ are such that 
$0 \not \equiv f \equiv g \pmod p$, then $k \equiv k' \pmod {(p-1)}$
\end{rmk}
 Our next result is a crucial ingredient in the proofs of certain results on congruences in Hermitian Jacobi forms. Tate's theory of theta cycle of  a modular form (see \cite[Section 7]{jochnowitz}) relies on a similar result due to Swinnerton-Dyer  \cite[Lemma 5]{swinnerton} in the case of modular forms. Richter \cite[Proposition 2]{ olav2} has generalized the above mentioned result of Swinnerton-Dyer to the case of classical Jacobi forms. In the next result, we prove an analogous result in the case of Hermitian Jacobi forms.
\begin{thm}\label{hjf}
If $\phi \in HJ^{\delta}_{k, m}(\Gamma^J(\mathcal{O}), \mathbb{Z}_{(p)})$, then there exists 
$\psi \in HJ^{\delta '}_{k', m}(\Gamma^J(\mathcal{O}),  \mathbb{Z}_{(p)})$ 
for some integer $k'$ and $\delta'\in \{+, -\}$ such that $\overline{L_m(\phi)}=\overline{\psi}$.
Moreover, if $\phi \not \equiv 0 \pmod p$, then  
$$
\Omega(L_m(\phi)) \le \Omega(\phi) + p+1,
$$
with equality if and only if $p\nmid (\Omega(\phi)-1)m$.
\end{thm}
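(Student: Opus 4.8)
The plan is to split off the non‑holomorphic part of $L_m$ with \lemref{heatop} and then replace the quasimodular $E_2$ by a genuine modular form modulo $p$. Writing $\hat{\phi}$ as in \eqref{heatoperator}, we have $L_m(\phi)=\hat{\phi}+\tfrac{(k-1)m}{3}E_2\phi$ with $\hat{\phi}\in HJ^{-\delta}_{k+2,m}(\Gamma^J(\mathcal{O}),\mathbb{Z}_{(p)})$. Invoking the classical congruences $E_{p-1}\equiv 1$ and $E_{p+1}\equiv E_2\pmod p$, I would set
$$
\psi:=E_{p-1}\hat{\phi}+\tfrac{(k-1)m}{3}E_{p+1}\phi ,
$$
which has weight $k+p+1$ and index $m$ and satisfies $\overline{L_m(\phi)}=\overline{\psi}$, since $\overline{E_{p-1}\hat\phi}=\overline{\hat\phi}$ and $\overline{E_{p+1}\phi}=\overline{E_2\phi}$. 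The one genuinely Hermitian point is that $\psi$ lies in a \emph{single} parity space: $\hat\phi$ has parity $-\delta$ while $\phi$ has parity $\delta$, and a direct check of the $\epsilon=i$ transformation in \eqref{hj1} shows that multiplication by a modular form of weight $l$ preserves parity when $l\equiv0\pmod4$ and reverses it when $l\equiv2\pmod4$; since the weights $p\pm1$ differ by $2$, both summands acquire the same parity $\delta'$ (namely $\delta'=-\delta$ if $p\equiv1\pmod4$ and $\delta'=\delta$ if $p\equiv3\pmod4$). Thus $\psi\in HJ^{\delta'}_{k+p+1,m}(\Gamma^J(\mathcal{O}),\mathbb{Z}_{(p)})$, proving the first assertion.

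For the inequality, note that $\overline{L_m(\phi)}$ depends only on $\overline{\phi}$ (the operator multiplies the $(n,r)$‑th coefficient by a constant times $mn-N(r)$), so I may assume from the start that $k=\Omega(\phi)$; then $\Omega(L_m(\phi))=\Omega(\psi)\le k+p+1=\Omega(\phi)+p+1$. The easy half of the equality is then immediate: if $p\mid(\Omega(\phi)-1)m$, the coefficient $\tfrac{(k-1)m}{3}$ vanishes modulo $p$, so $\overline{\psi}=\overline{E_{p-1}\hat\phi}=\overline{\hat\phi}$ and hence $\Omega(L_m(\phi))\le k+2<k+p+1$, a strict drop.

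The substance is the converse: assuming $p\nmid(k-1)m$, I must show $\Omega(\psi)=k+p+1$. First I would reduce the Hermitian filtration to the Jacobi filtration, proving $\Omega(\phi)=\omega(\phi)$ for every $\phi$: if $\overline{\phi}$ is realized by a weight‑$k_1$ form on $\Gamma^1(\mathcal{O})$, applying the averaging operator \eqref{averaging} of the appropriate parity reproduces $\overline{\phi}$ exactly whenever $k\equiv k_1\pmod 2$, and this parity congruence is forced by \rmkref{jr}, which gives $k\equiv k_1\pmod{(p-1)}$ and hence mod $2$. Granting $\Omega=\omega$, I would transport everything through the isomorphism of \thmref{iso} and \corref{isocor} into the matrix‑index space $J_{*,B}(\Gamma^2)$, which by Ziegler's structure theory is a free graded module over $M_*(SL_2(\mathbb{Z}))$ and so reduces modulo $p$ to a free module over $\overline{M_*(SL_2(\mathbb{Z}))}\cong\mathbb{F}_p[\overline{E_4},\overline{E_6}]$. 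In such a free module the $q$‑expansion map is injective on each homogeneous component, so a filtration drop is detected by divisibility of the weight‑$(k+p+1)$ representative by the Hasse invariant $\overline{E_{p-1}}$. By \rmkref{jr} the filtration can only fall in steps of $p-1$, so $\Omega(\psi)<k+p+1$ forces $\Omega(\psi)\le k+2$ and hence $\overline{E_{p-1}}\mid\psi$. Subtracting the summand $E_{p-1}\hat\phi$, which is visibly divisible by $\overline{E_{p-1}}$, leaves $\tfrac{(k-1)m}{3}\overline{E_{p+1}}\,\overline{\phi}$ divisible by $\overline{E_{p-1}}$; since $\tfrac{(k-1)m}{3}$ is a unit modulo $p$ and $\overline{E_{p-1}},\overline{E_{p+1}}$ are coprime in $\mathbb{F}_p[\overline{E_4},\overline{E_6}]$ (Serre--Swinnerton-Dyer), freeness yields $\overline{E_{p-1}}\mid\overline{\phi}$, i.e.\ $\omega(\phi)\le k-(p-1)<k$, contradicting $\Omega(\phi)=\omega(\phi)=k$.

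The hard part will be exactly this last paragraph, and precisely two of its ingredients: the identification $\Omega=\omega$ through the averaging operator, and the freeness of the reduced module of matrix‑index Jacobi forms over $\mathbb{F}_p[\overline{E_4},\overline{E_6}]$, which is what makes the divisibility/coprimality argument legitimate and is exactly the role played by \thmref{iso}. Everything else—the parity bookkeeping, the Eisenstein congruences, and the coprimality of $\overline{E_{p-1}}$ and $\overline{E_{p+1}}$—is either classical or a short computation. As an alternative to the module‑theoretic step one might try to import Richter's \cite[Proposition~2]{olav2} through the forms $\phi[\rho]$ of \propref{non-zero}, but I expect this to be awkward, since $L_m$ does not intertwine with the scalar heat operator under $\phi\mapsto\phi[\rho]$; the route through \thmref{iso} therefore seems cleaner.
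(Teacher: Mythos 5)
Your proposal is correct and follows essentially the same route as the paper: the same decomposition via \lemref{heatop} together with $E_{p-1}\equiv 1$, $E_{p+1}\equiv E_2 \pmod p$ and the same parity bookkeeping, the same reduction $\Omega=\omega$ by averaging over $\mathcal{O}^{\times}$ (the paper's \propref{relation}), and the same passage through \thmref{iso} to the free module structure of $J_{*, B}(\Gamma^2, \mathbb{Z}_{(p)})$ over $M_*(SL_2(\mathbb{Z}), \mathbb{Z}_{(p)})$ (the paper's \corref{present}, used in \propref{jump}). Your explicit coordinate-wise divisibility-by-$\overline{E_{p-1}}$ and coprimality argument is simply an unpacking of the Swinnerton-Dyer citation by which the paper asserts that $f_iE_2$ has maximal filtration, so the two proofs have the same mathematical content.
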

The method of proof of Richter \cite[Proposition 2]{ olav2} in the case of Jacobi forms can not be adopted directly to prove \thmref{hjf}. The main reason for this is the lack of certain structure of the space of Hermitian Jacobi forms. In the case of Jacobi forms, we have some structure available which was crucially used in the proof of \cite[Proposition 2]{ olav2}. However, we use the isomorphism between certain spaces of Jacobi forms proved in the last section to prove \thmref{hjf}. The remaining part of this section is devoted to the proof of \thmref{hjf}. We first state the following two results which are particular cases of three results of Raum and Richter \cite[Theorem 2.8, Proposition 2.11, Theorem 2.14]{richter}.
To state these results, we denote by $B$ the $2\times 2$ matrix 
$
\begin{pmatrix}
 m & 0 \\ 
 0 & m 
 \end{pmatrix}
 $
for an integer $m\ge 1$. 
\begin{lem}\label{previous}
The space $J_{*, B}(\Gamma^2, \mathbb{Z}_{(p)})$ is  a free module over $M_*(SL_2(\mathbb{Z}), \mathbb{Z}_{(p)})$ of rank $4m^2$  and it has a basis $\{\phi_1, \phi_2, \cdots, \phi_{4m^2}\}$ such that $\phi_i \in J_{k_i, B}(\Gamma^2, \mathbb{Z})$ for some integer $k_i$ for $1\le i\le 4m^2$. 
%Moreover, if $\phi=\sum_{i=1}^{4m^2}f_i\phi_i \in J_{k, B}(\Gamma^2, \mathbb{Z}_{(p)})$ and $\psi=\sum_{i=1}^{4m^2}g_i\phi_i \in J_{k', B}(\Gamma^2, \mathbb{Z}_{(p)})$ such that $0\not\equiv\phi \equiv \psi \pmod p$, then $f_i\equiv g_i \pmod p$.
\end{lem}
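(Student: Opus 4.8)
The plan is to obtain the lemma as the $l=2$, $M=B$ specialization of the three structure results of Raum and Richter \cite{richter}, one of which supplies freeness, one the rank, and one the integral basis. Before invoking them I would check that $B=mI_2$ lies in the admissible class of index matrices: it is symmetric, it is positive definite (its only eigenvalue is $m\ge 1$), and it is half-integral with integral diagonal since all its entries are integers. Thus $B$ is a legitimate index matrix of size $l=2$ in the sense of \eqref{mi1}--\eqref{mi3}, and \cite[Theorem 2.8, Proposition 2.11, Theorem 2.14]{richter} apply to it.

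Next I would assemble the three ingredients. By \cite[Theorem 2.8]{richter} the module $J_{*,B}(\Gamma^2,\mathbb{Z}_{(p)})$ is free over $M_*(SL_2(\mathbb{Z}),\mathbb{Z}_{(p)})$; the standing hypothesis $p\ge 5$ is exactly what keeps the base ring equal to the polynomial ring $\mathbb{Z}_{(p)}[E_4,E_6]$ and the bad primes $2,3$ out of the discussion. To pin the rank down I would use \cite[Proposition 2.11]{richter}, which identifies it with the number of theta components attached to the index, i.e.\ with the order of the finite group $\mathbb{Z}^2/2B\mathbb{Z}^2$. Since $2B=2mI_2$, this group is $(\mathbb{Z}/2m\mathbb{Z})^2$, of order $(2m)^2=\det(2B)=4m^2$, which is the asserted rank. (As a consistency check, the scalar index $M=(m)$ gives $\mathbb{Z}/2m\mathbb{Z}$, recovering the classical Eichler--Zagier rank $2m$ \cite{zagier}.) Finally, to guarantee that the basis members can be taken in $J_{k_i,B}(\Gamma^2,\mathbb{Z})$ and not merely in $J_{k_i,B}(\Gamma^2,\mathbb{Z}_{(p)})$, I would invoke the integral-structure statement \cite[Theorem 2.14]{richter}, which provides generators defined over $\mathbb{Z}$ that remain a basis after localization at $p$.

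I expect the substance of the argument to be entirely in matching hypotheses rather than in any new computation. Concretely, the points to verify are: that the Fourier-expansion and integrality normalizations of \cite{richter} agree with those fixed in \eqref{mi3} for the diagonal index $B$; that the rank count in \cite[Proposition 2.11]{richter} specializes to $4m^2$ with no discriminant-group correction peculiar to non-diagonal indices; and that the $\mathbb{Z}_{(p)}$-freeness of \cite[Theorem 2.8]{richter} and the $\mathbb{Z}$-rational basis of \cite[Theorem 2.14]{richter} are simultaneously available for every prime $p\ge 5$, including those dividing $m$. Once these conventions are reconciled, the three cited results combine immediately, with no further work; this matching of normalizations is the only place where care is required.

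It is worth recording why this purely matrix-index statement is the right thing to prove here: by \thmref{iso} and \corref{isocor} the module $J_{*,B}(\Gamma^2,\mathbb{Z}_{(p)})$ is isomorphic, over $M_*(SL_2(\mathbb{Z}),\mathbb{Z}_{(p)})$, to the module $J^1_{*,m}(\Gamma^1(\mathcal{O}),\mathbb{Z}_{(p)})$ of Jacobi forms over $\mathbb{Q}(i)$. Hence \lemref{previous} transports the Raum--Richter structure theory, which is formulated for matrix indices, to the setting over $\mathbb{Q}(i)$, and that is precisely the input needed for \thmref{hjf}.
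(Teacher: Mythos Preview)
Your proposal is correct and matches the paper's approach exactly: the paper gives no proof at all, merely introducing \lemref{previous} as a ``particular case'' of \cite[Theorem 2.8, Proposition 2.11, Theorem 2.14]{richter}, and your write-up supplies precisely the details of that specialization (checking $B=mI_2$ is an admissible index and computing $\lvert \mathbb{Z}^2/2B\mathbb{Z}^2\rvert=4m^2$). There is nothing to add.
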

\begin{lem}\label{previous1}
Let $\phi_i$ be as in the previous lemma. If
$\phi=\sum_{i=1}^{4m^2}f_i\phi_i \in J_{k, B}(\Gamma^2, \mathbb{Z}_{(p)})$ with  
$f_i\in M_{k-k_i}(SL_2(\mathbb{Z}), \mathbb{Z}_{(p)})$ and 
$\psi=\sum_{i=1}^{4m^2}g_i\phi_i \in J_{k', B}(\Gamma^2, \mathbb{Z}_{(p)})$ with
$g_i\in M_{k'-k_i}(SL_2(\mathbb{Z}), \mathbb{Z}_{(p)})$ are such that 
$0\not\equiv\phi \equiv \psi \pmod p$, then $f_i\equiv g_i \pmod p$.
\end{lem}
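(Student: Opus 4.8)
The plan is to turn the cross-weight congruence $\phi\equiv\psi\pmod p$ into an honest congruence in a single weight, where the freeness supplied by \lemref{previous} forces the module coefficients to agree. The two ingredients I need are the weight relation $k\equiv k'\pmod{(p-1)}$, coming from the mod $p$ theory already set up, and the elementary fact that $E_{p-1}\equiv 1\pmod p$.

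First I would establish $k\equiv k'\pmod{(p-1)}$. The isomorphism of \thmref{iso} (in the form of \corref{isocor}) merely relabels the Fourier exponents through the bijection $\mathcal{O}^{\#}\leftrightarrow\mathbb{Z}^2$ without altering the Fourier coefficients; it is thus $\mathbb{Z}_{(p)}$-linear, preserves $p$-integrality, and commutes with reduction modulo $p$. Applying the inverse map $j$ to $\phi$ and $\psi$ produces $f:=j(\phi)\in J^1_{k,m}(\Gamma^1(\mathcal{O}),\mathbb{Z}_{(p)})$ and $g:=j(\psi)\in J^1_{k',m}(\Gamma^1(\mathcal{O}),\mathbb{Z}_{(p)})$ with $0\not\equiv f\equiv g\pmod p$, and \rmkref{jr} then yields $k\equiv k'\pmod{(p-1)}$. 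After possibly interchanging $\phi$ and $\psi$, I write $k'=k+t(p-1)$ with $t\ge 0$.

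Next I would equalize the weights. Since $E_{p-1}^{\,t}\equiv 1\pmod p$, the form $E_{p-1}^{\,t}\phi=\sum_i\bigl(E_{p-1}^{\,t}f_i\bigr)\phi_i$ lies in $J_{k',B}(\Gamma^2,\mathbb{Z}_{(p)})$, is congruent to $\phi\equiv\psi$ modulo $p$, and has $i$-th coefficient $E_{p-1}^{\,t}f_i\in M_{k'-k_i}(SL_2(\mathbb{Z}),\mathbb{Z}_{(p)})$ congruent to $f_i$ modulo $p$. Hence it suffices to prove the lemma with each $f_i$ replaced by $E_{p-1}^{\,t}f_i$; that is, I may assume $k=k'$. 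Putting $h_i:=f_i-g_i\in M_{k-k_i}(SL_2(\mathbb{Z}),\mathbb{Z}_{(p)})$, the form $\phi-\psi=\sum_i h_i\phi_i\in J_{k,B}(\Gamma^2,\mathbb{Z}_{(p)})$ has every Fourier coefficient divisible by $p$, so $\eta:=\tfrac1p(\phi-\psi)$ is a holomorphic Jacobi form of weight $k$ and index $B$ with $p$-integral coefficients, i.e. $\eta\in J_{k,B}(\Gamma^2,\mathbb{Z}_{(p)})$.

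Finally I would invoke freeness. Expanding $\eta=\sum_i e_i\phi_i$ with $e_i\in M_{k-k_i}(SL_2(\mathbb{Z}),\mathbb{Z}_{(p)})$ by \lemref{previous}, the identity $\sum_i h_i\phi_i=p\eta=\sum_i(pe_i)\phi_i$ displays two expansions of one homogeneous weight-$k$ element over the basis $\{\phi_i\}$; uniqueness of such an expansion in a free module forces $h_i=pe_i$, hence $f_i\equiv g_i\pmod p$ for every $i$. I expect the only substantive point to be the weight congruence of the second paragraph, since everything afterwards is a formal consequence of $E_{p-1}\equiv 1$ and of the uniqueness of expansions over the basis $\{\phi_i\}$; alternatively, the whole statement may be quoted directly as the pertinent special case of \cite[Proposition 2.11, Theorem 2.14]{richter}.
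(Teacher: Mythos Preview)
Your proposal is correct, and in fact your closing sentence already matches the paper exactly: the paper does not prove \lemref{previous1} at all but simply introduces it (together with \lemref{previous}) as a particular case of \cite[Theorem 2.8, Proposition 2.11, Theorem 2.14]{richter}. So the ``paper's own proof'' is nothing more than the citation you mention as an alternative.

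What you have supplied beyond that is a self-contained argument the paper does not give. Two minor remarks on it. First, the detour through the isomorphism $j$ and \rmkref{jr} to obtain $k\equiv k'\pmod{p-1}$ is unnecessary: \cite[Proposition 2.6]{richter} already applies directly to Jacobi forms of matrix index, hence to $\phi,\psi\in J_{*,B}(\Gamma^2,\mathbb{Z}_{(p)})$, and gives the weight congruence immediately. Second, your use of freeness in the last step is exactly the right point: once $k=k'$, the element $\phi-\psi$ has two expansions $\sum h_i\phi_i=\sum(pe_i)\phi_i$ with all coefficients in $M_*(SL_2(\mathbb{Z}),\mathbb{Z}_{(p)})$, and uniqueness in the free module of \lemref{previous} forces $h_i=pe_i$. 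This is precisely the mechanism underlying the Raum--Richter results being cited, so your direct proof is in the same spirit as theirs rather than a genuinely new route.
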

Using the isomorphism stated in \corref{isocor} we get the following immediate consequence of \lemref{previous} and \lemref{previous1}.
\begin{cor}\label{present}
The space
$J_{*, m}^1(\Gamma^1(\mathcal{O}), \mathbb{Z}_{(p)})$ is a free module of rank $4m^2$ over $M_*(SL_2(\mathbb{Z}), \mathbb{Z}_{(p)})$. This space has a basis $\{\psi_1, \psi_2, \cdots, \psi_{4m^2}\}$ such that $\psi_i\in J^1_{k_i, m}(\Gamma^1(\mathcal{O}), \mathbb{Z})$ for some integer $k_i$ for $1\le i\le 4m^2$. Moreover, if $\phi=\sum_{i=1}^{4m^2}f_i\psi_i \in J^1_{k, m}(\Gamma^1(\mathcal{O}), \mathbb{Z}_{(p)})$ with $f_i\in M_{k-k_i}(SL_2(\mathbb{Z}), \mathbb{Z}_{(p)})$
and $\psi=\sum_{i=1}^{4m^2}g_i\psi_i \in J^1_{k', m}( \Gamma^1(\mathcal{O}), \mathbb{Z}_{(p)})$ 
with $g_i\in M_{k'-k_i}(SL_2(\mathbb{Z}), \mathbb{Z}_{(p)})$ are
such that $0\not\equiv\phi \equiv \psi \pmod p$, then $f_i\equiv g_i \pmod p$.
\end{cor}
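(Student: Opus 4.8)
The plan is to deduce \corref{present} from the module structure theorems \lemref{previous} and \lemref{previous1} for $J_{*, B}(\Gamma^2, \mathbb{Z}_{(p)})$ by transporting them across the isomorphism of \corref{isocor}. Before doing so I would stress the feature of \thmref{iso} that makes the transport work at the arithmetic level: the bijection $r = \frac{\alpha}{2} + i\frac{\beta}{2} \mapsto s = (\alpha, -\beta)^t$ appearing in its proof identifies the Fourier coefficients of $f$ and $i(f)$ index by index, so that $c(i(f); n, s) = c(f; n, r)$, and symmetrically for $j$. Hence $i$ and $j$ carry forms with coefficients in $\mathbb{Z}$ (resp. $\mathbb{Z}_{(p)}$) to forms with coefficients in $\mathbb{Z}$ (resp. $\mathbb{Z}_{(p)}$), and, a congruence modulo $p$ being merely a condition on individual coefficients, they commute with reduction modulo $p$. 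Finally, since $i$ substitutes only in the elliptic variables and fixes $\tau$, multiplication by a modular form $f_i(\tau)$ commutes with $i$ and $j$; this is the $M_*(SL_2(\mathbb{Z}), \mathbb{Z}_{(p)})$-linearity already recorded in \corref{isocor}.

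Granting this, the free-module assertion would follow at once. I would take the basis $\{\phi_1, \dots, \phi_{4m^2}\}$ of $J_{*, B}(\Gamma^2, \mathbb{Z}_{(p)})$ from \lemref{previous}, with $\phi_i \in J_{k_i, B}(\Gamma^2, \mathbb{Z})$, and set $\psi_i := j(\phi_i)$. As $j$ is an isomorphism of $M_*(SL_2(\mathbb{Z}), \mathbb{Z}_{(p)})$-modules, the images $\{\psi_1, \dots, \psi_{4m^2}\}$ form a free basis of $J_{*, m}^1(\Gamma^1(\mathcal{O}), \mathbb{Z}_{(p)})$, establishing freeness of rank $4m^2$. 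The coefficient-preservation noted above would then give $\psi_i \in J^1_{k_i, m}(\Gamma^1(\mathcal{O}), \mathbb{Z})$, since $\psi_i = j(\phi_i)$ inherits the integral coefficients of $\phi_i$.

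For the congruence assertion, I would start from $\phi = \sum_{i=1}^{4m^2} f_i \psi_i$ and $\psi = \sum_{i=1}^{4m^2} g_i \psi_i$ with the stated weights and $0 \not\equiv \phi \equiv \psi \pmod p$, and apply $i = j^{-1}$. By $M_*$-linearity and $i(\psi_i) = \phi_i$ this yields $i(\phi) = \sum_i f_i \phi_i$ and $i(\psi) = \sum_i g_i \phi_i$, and since $i$ commutes with reduction modulo $p$ we retain $0 \not\equiv i(\phi) \equiv i(\psi) \pmod p$. \lemref{previous1} then applies verbatim and delivers $f_i \equiv g_i \pmod p$ for every $i$.

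I do not expect a genuine obstacle: the whole argument is transport of structure, and once the entry-by-entry identification of Fourier coefficients in \thmref{iso} is in place, compatibility with the integral structures and with reduction modulo $p$ is automatic. The one point I would treat with a little care is upgrading the $\mathbb{Z}_{(p)}$-statement of \corref{isocor} to the $\mathbb{Z}$-integrality of the $\psi_i$; but this is furnished directly by that exact identification of coefficients, and needs no new estimate.
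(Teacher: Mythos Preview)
Your proposal is correct and follows precisely the approach the paper intends: the paper states only that \corref{present} is an ``immediate consequence of \lemref{previous} and \lemref{previous1}'' via the isomorphism of \corref{isocor}, and you have simply unpacked that transport of structure in detail, correctly isolating the index-by-index identification of Fourier coefficients in the proof of \thmref{iso} as the reason the $\mathbb{Z}$- and $\mathbb{Z}_{(p)}$-structures and the mod $p$ congruences are preserved.
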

Now we are ready to prove a result analogous to \thmref{hjf} for Jacobi forms on 
$\Gamma^1(\mathcal{O})$.
\begin{prop}\label{jump}
Let $p\ge 5$ be a prime.
If $\phi \in J_{k, m}^1(\Gamma^1(\mathcal{O}), \mathbb{Z}_{(p)})$, then there exists 
$\psi \in J^1_{k', m}(\Gamma^1(\mathcal{O}), \mathbb{Z}_{(p)})$ for some integer $k'$ such that
$\overline{L_m(\phi)}=\overline{\psi}$. Moreover, if $\phi \not \equiv 0 \pmod p$, then
$$
\omega(L_m(\phi)) \le \omega(\phi) + p+1,
$$
with equality if and only if $p\nmid (\omega(\phi)-1)m$.
\end{prop}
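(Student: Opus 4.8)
The plan is to transport Swinnerton-Dyer's filtration argument for the theta operator to Jacobi forms on $\Gamma^1(\mathcal{O})$ through the free-module description in \corref{present}. The first observation is that $L_m$ acts on the Fourier expansion \eqref{hj3} by multiplying the coefficient of $q^n\zeta_1^r\zeta_2^{\overline r}$ by the integer $4(mn-N(r))$; hence $L_m$ preserves $\mathbb{Z}_{(p)}$-integrality and commutes with reduction modulo $p$, so $\overline{L_m(\phi)}$ depends only on $\overline\phi$. To produce $\psi$, I would use the Serre-type derivative of \lemref{heatop}: for $\phi\in J^1_{k,m}(\Gamma^1(\mathcal{O}),\mathbb{Z}_{(p)})$ we have $\hat\phi=L_m(\phi)-\tfrac{(k-1)m}{3}E_2\phi\in J^1_{k+2,m}(\Gamma^1(\mathcal{O}),\mathbb{Z}_{(p)})$, integrality being clear since $p\ge 5$. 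Invoking the classical congruences $E_{p-1}\equiv 1$ and $E_2\equiv E_{p+1}\pmod p$, I replace $1$ by $E_{p-1}$ and $E_2$ by $E_{p+1}$ to obtain
$$
\overline{L_m(\phi)}=\overline{E_{p-1}\hat\phi+\tfrac{(k-1)m}{3}E_{p+1}\phi},
$$
and the right-hand form lies in $J^1_{k+p+1,m}(\Gamma^1(\mathcal{O}),\mathbb{Z}_{(p)})$; this is the desired $\psi$ (with $k'=k+p+1$), and the case $\phi\equiv 0$ is trivial.

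For the inequality I set $w=\omega(\phi)$ and replace $\phi$ by a representative $\phi_0\in J^1_{w,m}(\Gamma^1(\mathcal{O}),\mathbb{Z}_{(p)})$ with $\overline{\phi_0}=\overline\phi$; since $\overline{L_m(\phi)}=\overline{L_m(\phi_0)}$, the construction above with $k=w$ already gives $\omega(L_m(\phi))\le w+p+1$. The key structural point is that equality is a binary alternative: by \rmkref{jr}, $\omega(L_m(\phi))\equiv w+p+1\equiv w+2\pmod{p-1}$, and combined with $\omega(L_m(\phi))\le w+p+1$ this forces $\omega(L_m(\phi))$ to equal either $w+p+1$ or at most $w+2$. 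The easy half of the equivalence is then immediate: if $p\mid(w-1)m$ then $\tfrac{(w-1)m}{3}\equiv 0\pmod p$, so $\overline{L_m(\phi)}=\overline{\hat{\phi_0}}\in J^1_{w+2,m}(\Gamma^1(\mathcal{O}),\mathbb{F}_p)$ and the filtration drops to at most $w+2<w+p+1$.

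For the converse I would assume $p\nmid(w-1)m$ and argue by contradiction via \corref{present}. Writing $\phi_0=\sum_i f_i\psi_i$ with $f_i\in M_{w-k_i}(SL_2(\mathbb{Z}),\mathbb{Z}_{(p)})$, the uniqueness clause of \corref{present} yields $\omega(\phi_0)=\max_i\{k_i+\omega(f_i)\}=w$, so there is a dominant index $j$ with $\omega(f_j)=w-k_j$. If $\omega(L_m(\phi))\le w+2$, then $\overline{L_m(\phi)}=\overline{E_{p-1}\chi}$ for some $\chi\in J^1_{w+2,m}(\Gamma^1(\mathcal{O}),\mathbb{Z}_{(p)})$, and comparing module-coefficients in the weight-$(w+p+1)$ identity above (again using the uniqueness in \corref{present}) forces, for every $i$, a congruence $\tfrac{(w-1)m}{3}E_{p+1}f_i\equiv E_{p-1}h_i\pmod p$ with $h_i\in M_{w+2-k_i}(SL_2(\mathbb{Z}),\mathbb{Z}_{(p)})$. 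Since $\tfrac{(w-1)m}{3}$ is a $p$-adic unit, this gives $\omega(E_{p+1}f_j)\le (w-k_j)+2$, whereas $f_j$ has filtration equal to its weight $w-k_j$; a contradiction will follow once one knows that multiplication by $E_{p+1}$ raises the filtration of such a form by exactly $p+1$, i.e. $\omega(E_{p+1}f_j)=(w-k_j)+p+1$.

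The hard part is precisely this last modular-forms claim, which is the genuinely new technical input transported across \corref{present}. It is equivalent to the classical coprimality of $\overline{E_{p-1}}$ and $\overline{E_{p+1}}$ in $\mathbb{F}_p[\overline{E_4},\overline{E_6}]$: a form whose filtration equals its weight is exactly one whose polynomial representative is not divisible by $\overline{E_{p-1}}$, and by coprimality $\overline{E_{p-1}}$ divides the product $\overline{E_{p+1}f_j}$ only if it already divides the representative of $f_j$. I would cite this coprimality from the theory of Swinnerton-Dyer \cite{swinnerton} (see also \cite{jochnowitz}). The remaining bookkeeping—the coefficientwise action of $L_m$, the $\mathbb{Z}_{(p)}$-integrality of $\hat\phi$, the congruences for $E_{p-1}$ and $E_{p+1}$, and the translation of the modular-forms filtration statement through the free decomposition of \corref{present}—is routine, so I expect the coprimality input to be the only real obstacle.
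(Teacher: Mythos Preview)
Your proof is correct and follows essentially the same route as the paper's: both use \lemref{heatop} together with $E_{p-1}\equiv 1$ and $E_2\equiv E_{p+1}\pmod p$ to place $\overline{L_m(\phi)}$ in weight $w+p+1$, and both reduce the converse direction to the statement that multiplication by $E_2$ (equivalently $E_{p+1}$) raises the filtration of $\phi$ by exactly $p+1$, proved via the free-module decomposition of \corref{present} and Swinnerton-Dyer's classical result \cite[Theorem~2, Lemma~5]{swinnerton}. Your coprimality formulation of the last step is exactly what underlies the paper's citation of Swinnerton-Dyer, so the arguments coincide.
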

\begin{proof}
We broadly follow the idea of Richter \cite[Proposition 2]{olav2} to prove this proposition. Suppose that $w(\phi)=k$. It is well known that $E_{p-1}\equiv 1 \pmod p$ and $E_{p+1} \equiv E_2 \pmod p$, 
where $E_{p-1}$, $E_{p+1}$ and $E_2$ are the Eisenstein series on $SL_2(\mathbb{Z})$ of weights $p-1$, $p+1$ and $2$ respectively and $p\ge 5$. Therefore by \lemref{heatop} we have
$$
L_m(\phi)\equiv \hat{\phi}E_{p-1}+\frac{(k-1)m}{3}E_{p+1}\phi \pmod{p},
$$
and $\hat{\phi}E_{p-1}+\frac{(k-1)m}{3}E_{p+1}\phi \in 
J^1_{k+p+1, m}(\Gamma^1(\mathcal{O}), \mathbb{Z}_{(p)})$. This proves the first assertion of the proposition. Now let us assume that $\phi \not \equiv 0 \pmod p$.
Then from the above discussion we have $\omega(L_m(\phi))\le k+p+1$. If
 $p\mid (k-1)m$ then by \eqref{heatoperator} we obtain $\omega(L_m(\phi)) \le k+2 < k+p+1$. Conversely assume that $\omega(L_m(\phi)) < k+p+1$. Assume on the contrary that 
 $p\nmid (k-1)m$. Then by \eqref{heatoperator} we have $\omega\left(\frac{(k-1)m}{3}E_2 \phi \right) <k+p+1$. We shall prove that $\omega(E_2 \phi) =k+p+1$ which leads to a contradiction. By \corref{present} we can write $\phi=\sum_{i=1}^{4m^2} f_i \psi_i$, where 
 $\psi_i\in J^1_{k_i, m}(\Gamma^1(\mathcal{O}), \mathbb{Z})$ and
 $f_i\in M_{k-k_i}(SL_2(\mathbb{Z}), \mathbb{Z}_{(p)})$ for $1\le i\le 4m^2$. Since $w(\phi)=k$, there exists $i$ such that $w(f_i\phi_i)=k$. Also by \cite[Theorem 2, Lemma 5]{swinnerton}, $f_iE_2$ has the maximal filtration and therefore we find that $\omega (\phi E_2)=k+p+1$.
\end{proof}

If $f\in HJ^{\delta}_{k, m}(\Gamma^J(\mathcal{O}), \mathbb{Z}_{(p)})$, then since 
$HJ^{\delta}_{k, m}(\Gamma^J(\mathcal{O}), \mathbb{Z}_{(p)}) \subset J^1_{k, m}(\Gamma^1(\mathcal{O}), \mathbb{Z}_{(p)})$, %for each $\delta \in \{+, -\}$ 
both $\Omega(f)$ and $\omega(f)$
are defined. The following proposition shows that in fact, both are same.
\begin{prop}\label{relation}
Let $p\ge 5$ be a prime.
If $f\in HJ^{\delta}_{k, m}(\Gamma^J(\mathcal{O}), \mathbb{Z}_{(p)})$, then 
$
\Omega(f)=\omega(f).
$
\end{prop}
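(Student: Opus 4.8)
The plan is to prove the two inequalities $\omega(f)\le\Omega(f)$ and $\Omega(f)\le\omega(f)$ separately. The first is immediate: reducing the inclusion \eqref{subset} modulo $p$ gives $HJ^{\delta}_{k'',m}(\Gamma^J(\mathcal{O}),\mathbb{F}_p)\subseteq J^1_{k'',m}(\Gamma^1(\mathcal{O}),\mathbb{F}_p)$ for every weight $k''$, so every weight witnessing $\Omega(f)$ also witnesses $\omega(f)$. The substantive direction is $\Omega(f)\le\omega(f)$, which rests on the unit action of $\mathcal{O}^{\times}$ on Fourier coefficients. Throughout I assume $f\not\equiv 0\pmod p$, the case $\overline f=0$ being trivial.

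For the reverse inequality, write $k'=\omega(f)$ and choose $g\in J^1_{k',m}(\Gamma^1(\mathcal{O}),\mathbb{Z}_{(p)})$ with $\overline g=\overline f$. For $\epsilon\in\mathcal{O}^{\times}$ let $T_\epsilon$ be the operator $(T_\epsilon h)(\tau,z_1,z_2)=\epsilon^{-k'}h(\tau,\epsilon z_1,\overline\epsilon z_2)$, which is the slash in \eqref{hj1} stripped of the factor $\sigma(\epsilon)$. One checks, using that $\mathcal{O}^{\times}$ preserves the lattice $\mathcal{O}$ (this is implicit in the well-definedness of the averaging operator \eqref{averaging}), that each $T_\epsilon$ preserves $J^1_{k',m}(\Gamma^1(\mathcal{O}))$, that $\epsilon\mapsto T_\epsilon$ is a representation of $\mathcal{O}^{\times}$, and that within $J^1_{k',m}$ the spaces $HJ^{+}_{k',m}$ and $HJ^{-}_{k',m}$ are exactly the $+1$- and $-1$-eigenspaces of $T_i$. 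Invariance of $g$ under $-I\in SL_2(\mathbb{Z})$ (the relation \eqref{hj1} with $\epsilon=1$, $M=-I$) gives $g(\tau,-z_1,-z_2)=(-1)^{k'}g$, hence $T_{-1}=\mathrm{id}$, hence $T_i^2=\mathrm{id}$, and therefore
$$
J^1_{k',m}(\Gamma^1(\mathcal{O}))=HJ^{+}_{k',m}(\Gamma^J(\mathcal{O}))\oplus HJ^{-}_{k',m}(\Gamma^J(\mathcal{O})),
$$
with projections $P^{\pm}=\tfrac12(\mathrm{id}\pm T_i)$.

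The key observation is that a nonzero form in $HJ^{\delta}_{k,m}(\Gamma^J(\mathcal{O}),\mathbb{Z}_{(p)})$ has even weight. Indeed, applying \eqref{hj1} with $\epsilon=i$ and comparing Fourier coefficients yields $c(f;n,ir)=\sigma(i)i^{-k}c(f;n,r)$ for all $n$ and all $r\in\mathcal{O}^{\#}$; when $k$ is odd the scalar $\sigma(i)i^{-k}$ is purely imaginary, so this identity between real numbers forces $c(f;n,r)=0$ for every $n,r$, i.e.\ $f=0$, contrary to $f\not\equiv 0\pmod p$. Hence $k$ is even, and since $\overline f\ne 0$ we have $k'\equiv k\pmod{p-1}$ by \rmkref{jr}; as $p-1$ is even, $k'$ is even as well. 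This evenness is precisely what keeps the projections rational: since $i^{-k'}=\pm1$, the coefficient identity $c(T_i h;n,r)=i^{-k'}c(h;n,-ir)$ shows that $T_i$, and hence each $P^{\pm}$ (because $\tfrac12\in\mathbb{Z}_{(p)}$), preserves $\mathbb{Z}_{(p)}$-rationality.

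Finally I project. Writing $T_i f=i^{-k'}f(\tau,iz_1,-iz_2)$ and using that $f\in HJ^{\delta}_{k,m}$ satisfies $f(\tau,iz_1,-iz_2)=\lambda_\delta\,i^{k}f$ with $\lambda_\delta\in\{+1,-1\}$, I get $T_i f=\lambda_\delta i^{k-k'}f$; since $k-k'$ is even this eigenvalue $\mu:=\lambda_\delta i^{k-k'}$ equals $\pm1$, and I let $\delta'$ be the parity for which $P^{\delta'}f=f$. Then $h:=P^{\delta'}g$ lies in $HJ^{\delta'}_{k',m}(\Gamma^J(\mathcal{O}),\mathbb{Z}_{(p)})$ by the previous paragraph, and because $P^{\delta'}$ is defined over $\mathbb{Z}_{(p)}$ and $g\equiv f\pmod p$ we obtain $h\equiv P^{\delta'}f=f\pmod p$. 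Thus $\overline f=\overline h\in HJ^{\delta'}_{k',m}(\Gamma^J(\mathcal{O}),\mathbb{F}_p)$, giving $\Omega(f)\le k'=\omega(f)$ and finishing the proof. The main obstacle is exactly the rationality of this projection: over $\mathbb{C}$ the eigenspace splitting is automatic, but projecting a $\mathbb{Z}_{(p)}$-rational $g$ onto a parity eigenspace a priori introduces factors of $i$ and only controls $\overline f$ modulo a prime of $\mathbb{Z}[i]$ lying over $p$; the even-weight phenomenon is what collapses those factors to $\pm1$ and keeps the whole construction defined over $\mathbb{Z}_{(p)}$.
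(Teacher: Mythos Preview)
Your proof is correct and follows essentially the same route as the paper: pick $g\in J^1_{k',m}(\Gamma^1(\mathcal{O}),\mathbb{Z}_{(p)})$ with $\overline g=\overline f$, then average over the unit group $\mathcal{O}^{\times}$ to land back in a Hermitian Jacobi space of weight $k'$. The paper writes the averaging as $h=\tfrac14\sum_{\epsilon}g\mid_{l,m,\delta}\epsilon I$ and splits into the cases $k-l\equiv 0$ or $2\pmod 4$, while you phrase the same operator as the eigenprojection $P^{\pm}=\tfrac12(\mathrm{id}\pm T_i)$; since $T_{-1}=\mathrm{id}$ and $T_{-i}=T_i$ on $J^1_{k',m}$, the two formulas coincide. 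Your explicit verification that a nonzero $f\in HJ^{\delta}_{k,m}(\Gamma^J(\mathcal{O}),\mathbb{Z}_{(p)})$ forces $k$ (hence $k'$) to be even is a nice addition: it is exactly what guarantees $i^{-k'}=\pm1$ and hence that the projection stays $\mathbb{Z}_{(p)}$-rational, a point the paper handles only implicitly through its $k-l\pmod 4$ case split.
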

\begin{proof}
Since $HJ^{\delta}_{k, m}(\Gamma^J(\mathcal{O}), \mathbb{Z}_{(p)}) \subset J^1_{k, m}(\Gamma^1(\mathcal{O}), \mathbb{Z}_{(p)})$, we always have 
$$
\omega(f) \le \Omega(f).
$$
Suppose that $w(f)=l$. To prove $\omega(f) = \Omega(f)$, it is sufficient to prove that there exists a Hermitian Jacobi form $h\in HJ^{\delta'}_{l, m}(\Gamma^J(\mathcal{O}), \mathbb{Z}_{(p)})$ for some 
$\delta' \in \{+, -\}$ such that $f\equiv h \pmod{p}$. Since $w(f)=l$, there exists a Jacobi form 
$g\in J_{l, m }^1(\Gamma^1(\mathcal{O}), \mathbb{Z}_{(p)})$ such that 
\begin{equation}\label{imp}
f(\tau, z_1, z_2) \equiv g(\tau, z_1, z_2) \pmod p.
\end{equation}
By  \rmkref{jr}, we have $k-l=a(p-1)$ for some integer $a$.  Let $k-l \equiv 0 \pmod 4$ and $\epsilon \in \mathcal{O}^{\times}$. Replacing $z_1$ by $\epsilon z_1$ and $z_2$ by $\overline{\epsilon}z_2$, we deduce from \eqref{imp} that
$$
f(\tau, \epsilon z_1, \overline{\epsilon}z_2) \equiv g(\tau, \epsilon z_1, \overline{\epsilon}z_2) \pmod{p}.
$$
Using the transformation property \eqref{hj1} for $f$ in the above congruence, we obtain
$$
f(\tau, z_1, z_2)\equiv  \sigma(\epsilon) \epsilon^{-k}g(\tau, \epsilon z_1, \overline{\epsilon}z_2)  
\pmod p,
$$
which implies that
$$
f(\tau, z_1, z_2) \equiv g\mid_{l, m, \delta}\epsilon I \pmod p.
$$
Let us define 
$$
h(\tau, z_1, z_2)=\frac{1}{4}\sum_{\epsilon \in \mathcal{O}^{\times}} g\mid_{l, m , \delta}\epsilon I.
$$
Then from \eqref{averaging} we have 
$h(\tau, z_1, z_2) \in HJ_{l, m}^{\delta}(\Gamma^J(\mathcal{O}), \mathbb{Z}_{(p)})$. Also it is clear that $f(\tau, z_1, z_2) \equiv h(\tau, z_1, z_2) \pmod{p}$. This proves that 
$\Omega(f)=\omega(f)$ if $k-l\equiv 0\pmod{4}$. If $k-l\equiv 0\pmod{2}$, then 
$h(\tau, z_1, z_2) \in HJ_{l, m}^{-\delta}(\Gamma^J(\mathcal{O}), \mathbb{Z}_{(p)})$. Then
one proves similarly that 
$\Omega(f)=\omega(f)$.
\end{proof}
\noindent{\bf Proof of \thmref{hjf}:}
Let $\phi \in HJ_{k, m}^{\delta}(\Gamma^J(\mathcal{O}), \mathbb{Z}_{(p)})$. We shall first prove that
\begin{equation}\label{cas}
%\[ 
\overline{L_m(\phi)} \in \begin{cases} 
       HJ_{k+p+1}^{\delta}(\Gamma^J(\mathcal{O}), \mathbb{F}_p) & ~~~~\text{if}~~~~p\equiv 3 \pmod p, \\
       HJ_{k+p+1}^{-\delta}(\Gamma^J(\mathcal{O}), \mathbb{F}_p) &~~~~ \text{if}~~~~ p\equiv 1 \pmod p.
   \end{cases}
%\]
\end{equation}
By \lemref{heatop}, we have
$$
L_m(\phi)=\hat{\phi}+\frac{(k-1)m}{3}E_2\phi,
$$
where $\hat{\phi}\in HJ_{k+2, m}^{-\delta}(\Gamma^J(\mathcal{O}), \mathbb{Z}_{(p)})$. Since
$$
E_{p-1} \equiv 1{\pmod p}~~~\mbox{and}~~~E_{p+1} \equiv E_2{\pmod p},
$$
we have
$$
L_m(\phi)\equiv \hat{\phi}E_{p-1}+\frac{(k-1)m}{3}E_{p+1}\phi \pmod{p}.
$$
Let $g=\hat{\phi}E_{p-1}+\frac{(k-1)m}{3}E_{p+1}\phi \pmod{p}$. Then 
$g\in J^1_{k+p+1, m}(\Gamma^1(\mathcal{O}), \mathbb{Z}_{(p)})$. Let $p\equiv 3 \pmod{4}$.
We will prove that $g\in HJ^{\delta}_{k+p+1, m}(\Gamma^J(\mathcal{O}), \mathbb{Z}_{(p)})$ by doing a straightforward computation. To prove 
$g\in HJ^{\delta}_{k+p+1, m}(\Gamma^J(\mathcal{O}), \mathbb{Z}_{(p)})$, it is sufficient to prove that
$$
g\mid_{k+p+1, m, \delta} \epsilon I= g
$$
for any $\epsilon\in \mathcal{O}^\times$. To prove this one easily checks that 
$$
\hat{\phi}E_{p-1}\mid_{k+p+1, m, \delta} \epsilon I = \hat{\phi}E_{p-1}
~~~\mbox{and}~~~
E_{p+1} f \mid_{k+p+1, m, \delta} \epsilon I = E_{p+1}f.
$$ 
This proves \eqref{cas} for $p\equiv 3 \pmod{4}$. The case for $p\equiv 1 \pmod{4}$ is similarly done.
Now by \propref{relation}, we have 
$$
\Omega(\phi)=\omega(\phi) \quad \text{and} \quad \Omega(L_m(\phi))=\omega(L_m(\phi)), 
$$
Therefore by \propref{jump}, \thmref{hjf} follows.
\section{Congruences in Hermitian Jacobi forms}
Let $p\ge 5$ be a prime. 
Let $\phi$ be a formal series of the form
$$
\phi=\sum_{\substack{n \in \mathbb{Z}, r\in \mathcal{O}^{\#} }}c(\phi; n, r)q^n \zeta_1^r\zeta_2^{\overline{r}},
$$
where $c(\phi; n, r)\in \mathbb{Z}_{(p)}$.
The heat operator $L_m$ acts on $\phi$ by
$$
L_m(\phi)=\sum_{\substack{n \in \mathbb{Z}, r\in \mathcal{O}^{\#} }} 4(nm-N(r))c(\phi; n, r)q^n \zeta_1^r \zeta_2^{\overline{r}}.
$$
We call the finite sequence $L_m^1(\phi):=L_m(\phi), L_m^2(\phi),\cdots L_m^{p-1}(\phi)$, the heat cycle of $\phi$. We observe that $L_m^{j+p-1} (\phi)\equiv  L_m^j(\phi)\pmod p$ for any integer 
$j\ge 1$. We say that 
$\phi$ is in its own heat cycle if $L_m^{p-1}(\phi) \equiv \phi \pmod p$. Now assume that
$\phi \in HJ_{k, m}^{\delta}(\Gamma^J(\mathcal{O}), \mathbb{Z}_{(p)})$, $\phi \not \equiv 0 \pmod p$ and $p\nmid m$. If $\Omega(L_m^i(\phi))\equiv 1 \pmod p$ for some integer $i\ge 1$, then we call $L_m^i(\phi)$ a high point and $L_m^{i+1}(\phi)$ a low point of the heat cycle. Suppose that $L_m(\phi)\not\equiv 0 \pmod{p}$ and $L_m^i(\phi)$ is a high point in the heat cycle. Then by \thmref{hjf}, we have
$$
\Omega(L_m^{i+1}(\phi))<\Omega(L_m^i(\phi)) +p+1.
$$
Also by \propref{hermitiancase} we have
\begin{equation}\label{heatcycle}
\Omega(L_m^{i+1}(\phi))=\Omega(L_m^i(\phi)) +p+1-s(p-1)
\end{equation}
for some integer $s\ge 1$.
We first prove the following important lemma which will be used to prove results on $U(p)$ congruences and Ramanujan-type congruences in this section. 
\begin{lem}\label{heatprop}
Let $p\ge 5$ be a prime.
Let $\phi \in HJ^{\delta}_{k, m}(\Gamma^J(\mathcal{O}), \mathbb{Z}_{(p)})$ 
for some $\delta \in \{+, -\}$. 
Suppose that $p\nmid m$ and $L_m(\phi)\not \equiv 0 \pmod p$.
\begin{itemize}
\item If $j \ge 1$, then $\Omega(L_m^j(\phi)) \not \equiv 2 \pmod p$.
\item The heat cycle of $\phi$ has one low point if and only if there is some $j \ge 1$ with $\Omega(L_m^j(\phi))\equiv 3 \pmod p$. In this case the low point is $L_m^j(\phi)$.
\item For any $j \ge 1$, $\Omega(L_m^{j+1}(\phi))\not = \Omega (L_m^{j}(\phi))+2$.
\item The number of low points of the heat cycle of $\phi$ is either one or two.
%\item Let $\Omega(L_m^{p-1}(\phi))\equiv \phi \pmod p$, let $\phi$ be a low point and let heat cycle of $\phi$ has two low points
\end{itemize}
\end{lem}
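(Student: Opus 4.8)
The plan is to run a ``theta cycle'' analysis on the filtrations $w_j:=\Omega(L_m^j(\phi))$, in the spirit of Tate and Jochnowitz. First I would record the local behaviour of the cycle. Since $p\nmid m$ and $L_m(\phi)\not\equiv 0$, one checks that $L_m^j(\phi)\not\equiv 0\pmod p$ for every $j\ge 1$ (otherwise periodicity $L_m^{j+p-1}\equiv L_m^j$ would force $L_m(\phi)\equiv 0$), so each step of the cycle is governed by \thmref{hjf}: $w_{j+1}=w_j+p+1$ whenever $w_j\not\equiv 1\pmod p$, while at a high point $w_j\equiv 1\pmod p$ equation \eqref{heatcycle} gives $w_{j+1}=w_j+p+1-s(p-1)$ for some integer $s\ge 1$. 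Reading these modulo $p$: a non-high step raises the residue $w_j\bmod p$ by $1$, whereas a high step sends residue $1$ to residue $2+s$. Thus the cycle consists of ``ascending runs'' in which the residue climbs by $1$ up to the value $1$, punctuated by drops at the high points.

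Bullet~4 I would obtain from a counting argument over one period. Let $\ell$ be the period of $(w_j)$, so $\ell\mid p-1$, and let $H$ be the number of high points (equivalently, low points) per period. Since over a period the net change $\sum(w_{j+1}-w_j)=0$ while every increment is $\equiv p+1\equiv 2\pmod{p-1}$, we get $2\ell\equiv 0\pmod{p-1}$, hence $\ell\in\{(p-1)/2,\,p-1\}$. Balancing the total gain of the ascending runs against the total drop gives $\sum s=\ell(p+1)/(p-1)$, while counting steps between consecutive high points gives $\ell=Hp-\sum s$ (using that each $s\le p-1$, which holds as $\sum s\le p+1$). Eliminating $\sum s$ yields $2\ell=H(p-1)$, so $H\in\{1,2\}$, with $H=1\Leftrightarrow \ell=(p-1)/2$ and $H=2\Leftrightarrow \ell=p-1$.

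For bullet~1, every form of the cycle lies on an ascending run, whose residues form the increasing arc from the low-point residue $2+s$ up to $1$; this arc ends at $1$ and begins at $2+s$ with $s\ge 1$, so it never meets the residue $2$, giving $w_j\not\equiv 2\pmod p$. For bullet~3, a gain of exactly $2$ forces a high step with $s=1$ (a generic step gains $p+1>2$, and a high step gains $2$ iff $s=1$); I would exclude $s=1$ using the arithmetic from bullet~4. When $H=2$ we have $s_1+s_2=p+1$, and $s_1=1$ would make the partner low point have residue $2+s_2\equiv 2\pmod p$, contradicting bullet~1; when $H=1$ we have $s=(p+1)/2>1$. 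Hence $s\ge 2$ throughout and no $+2$ gain occurs.

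Bullet~2 is where the real work lies, and it is the step I expect to be the main obstacle. Having split into the cases $\ell=(p-1)/2$ (one low point) and $\ell=p-1$ (two low points) via bullet~4, I would compute the residues that actually occur in each case from the drop rule $w_{j+1}\equiv 2+s\pmod p$ together with the value(s) of $s$ forced by the period, and show that the distinguished residue class in the statement occurs precisely in the one-low-point case and only at the low point itself. The delicate part is pinning down the drop parameter $s$ exactly, rather than merely bounding it, since the residue fingerprint that separates the one- and two-low-point cycles depends on it; this forces one to control the interaction between the period constraint, the ban on residue $2$ coming from bullet~1, and the possibility of a low point whose residue wraps past~$1$. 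Once the low-point residues are determined in both cases, the stated equivalence and the identification of the unique low point follow at once.
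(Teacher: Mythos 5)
There is a genuine gap, and it sits exactly at the point you yourself flagged as delicate: the wrap-around of residues past $1$. The fatal step is the counting identity $\ell = Hp - \sum s$, which you support with ``each $s\le p-1$, which holds as $\sum s\le p+1$''. That inference is false when $H=1$: then $s=\sum s$ may equal $p+1$, and this is precisely what happens in the one-low-point cycle. There the unique drop has $s=p+1$ (this is what the paper derives in its proof of the second assertion), the low-point residue is $2+s\equiv 3\pmod p$, and the climb back to residue $1$ takes $p-2$ steps, so the gap between consecutive high points is $p-1$, not $p-s=-1$. Hence your master identity $2\ell=H(p-1)$ fails, and the dichotomy you extract from it is backwards: the genuine one-low-point heat cycle has minimal filtration period $\ell=p-1$ with a single high point and $s=p+1$, whereas $\ell=(p-1)/2$ (which occurs when $s_1=s_2=(p+1)/2$, e.g.\ the Ramanujan-congruence configuration of \corref{heatenq}) yields \emph{two} low points of the heat cycle, since the lemma counts low points in the window of $p-1$ terms, not per minimal period. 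As written, your fourth bullet asserts that every heat cycle has exactly two low points, which contradicts the second bullet. The same wrap-around issue undermines your first bullet (you need $s\not\equiv 0\pmod p$, not merely $s\ge 1$: a drop with $s=p$ would place a low point at residue $2$) and your third (in the true $H=1$ case one has $s=p+1$, not $(p+1)/2$). Finally, the second bullet --- which you yourself identify as the real work --- is only a plan, and the plan is keyed to the incorrect dichotomy.

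The paper's proof avoids all of this by never assuming any bound on $s$ in the first three assertions; each is a direct climb-and-contradict argument. If $\Omega(L_m^j(\phi))\equiv 2\pmod p$, then \thmref{hjf} applies in its equality case for $p-1$ consecutive steps, giving $\Omega(L_m^{j+p-1}(\phi))=\Omega(L_m^j(\phi))+(p-1)(p+1)$, contradicting periodicity of filtrations; the third bullet is handled the same way after noting that a jump of $+2$ forces residue $1$ at the previous step, hence residue $3$ at the next. For the second bullet, residue $3$ forces $p-2$ equality steps up to a high point, and periodicity then pins the unique drop to $s=p+1$, giving both directions of the equivalence. Only in the fourth assertion does the paper use your kind of step-counting $i_{j+1}-i_j=p-s_j$, and there it is legitimate because it is carried out under the hypothesis $t\ge 2$: the third bullet gives each $s_j\ge 2$, and $\sum_j s_j=p+1$ then forces $s_j\le p-1$, so no residue wraps past $1$. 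If you reorganize your argument in that order --- residue bans first, by direct climbs; counting last, only in the multi-high-point case --- it goes through.
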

\begin{proof}
Suppose that $\Omega(L_m^j(\phi)) \equiv 2 \pmod p$. Then $p\nmid (\Omega(L_m^j(\phi))-1)m$. 
Using \thmref{hjf} inductively we obtain
$$
\Omega(L_m^{j+n}(\phi))=\Omega(L_m^{j}(\phi))+n(p+1)
$$
for any integer $n$ with $1\le n \le p-1$. Since $L_m^j(\phi)\equiv L_m^{j+p-1}(\phi) \pmod{p}$ for any $j\ge 1$, 
in particular for $n=p-1$, we have
$$\Omega(L_m^j(\phi))=\Omega(L_m^{j+p-1}(\phi))=\Omega(L_m^j(\phi))+(p-1)(p+1).$$
This gives a contradiction. This proves the first assertion.

Suppose that $\Omega(L_m^j(\phi)) \equiv 3 \pmod p$. Applying \thmref{hjf} inductively
we have 
\begin{equation}\label{1}
\Omega(L_m^{j+n}(\phi))=\Omega(L_m^j(\phi))+n(p+1)
\end{equation}
for $1 \le n \le p-2$. Since $\Omega(L_m^{j+p-2}(\phi))\equiv 1 \pmod{p}$, $L_m^{j+p-2}(\phi)$
 is a high point. Therefore by $\eqref{heatcycle}$, we obtain
$$
\Omega(L_m^j(\phi))=\Omega(L_m^{j+p-1}(\phi))=\Omega(L_m^j(\phi))+(p-1)(p+1)-s(p-1)
$$
for some integer $s\ge 1$. From the above identity we deduce that $s=p+1$ and $L_m^j(\phi)$ is a low point and from \eqref{1} we observe that this is the only low point. Conversely assume that there is only one low point in the heat cycle. Let $L_m^j(\phi)$ be the only low point. Then
$L_m^{j+p-2}(\phi)$ must be the high point and 
$$
\Omega(L_m^{j+n}(\phi))=\Omega(L_m^{j}(\phi))+n(p+1)
$$
for any integer $n$ with $1\le n\le p-2$. Since $\Omega(L_m^{j+p-2}(\phi)) \equiv 1\pmod{p}$, from the above identity we have $\Omega(L_m^{j}(\phi))\equiv 3 \pmod{p}$. This proves the second assertion.

Suppose that $\Omega(L_m^{j+1}(\phi))=\Omega(L_m^j(\phi))+2$, for some $j\ge 1$. Then by \thmref{hjf} we have 
$$
\Omega(L_m^j(\phi))\equiv 1 \pmod p
$$
Therefore $\Omega(L_m^{j+1}(\phi))\equiv 3 \pmod p$. Using \thmref{hjf} inductively we obtain
$$
\Omega(L_m^{j+1+n}(\phi))=\Omega(L_m^{j+1}(\phi))+n(p+1)
$$
for any any integer $n$ with $1\le n \le p-2$. In particular for $n=p-2$, we get
$$
\Omega(L_m^j(\phi))=\Omega(L_m^{j+1+p-2}(\phi))=\Omega(L_m^j(\phi))+2+(p-2)(p+1).
$$
This gives a contradiction, proving the third assertion.

The second assertion of this lemma gives the necessary and sufficient condition for a heat cycle to have only one low point. Now suppose that the number of high points in the heat cycle of $\phi$ is $t\ge 2$. For $1\le i_1 \le i_2 \le \cdots \le i_t\le p-1$, let $L_m^{i_j}(\phi)$ be the high points in the heat cycle of $\phi$. We assume that $i_{t+1}=i_1+(p-1)$ for our convenience. By $\eqref{heatcycle}$ and the third assertion of this Lemma, for each $j$ with $1\le j\le t$, there exists an integer $s\ge 2$ such that
\begin{equation}\label{fall}
\Omega(L_m^{i_{j}+1}(\phi))=\Omega(L_m^{i_j}(f))+(p+1)-s_j(p-1)\equiv 2+s_j \pmod p.
\end{equation}
Therefore we have
$$
\Omega(L_m(\phi))=\Omega(L_m^{1+p-1}(\phi))=\Omega(L_m(\phi))+(p-1)(p+1)-(p-1)
\sum_{j=1}^{t}s_j.
$$
From the above identity, we deduce that $\sum_{j=1}^{t}s_j=p+1$. Let $1\le j\le t-1$. From \eqref{fall}, we have
$$
\Omega(L_m^{i_{j+1}}(\phi))\equiv i_{j+1}-i_j +1+s_j \pmod{p}.
$$
Also since $L_m^{i_{j+1}}(\phi)$ is a high point, we have
$$
\Omega(L_m^{i_{j+1}}(\phi))\equiv 1 \pmod{p}.
$$ 
From the above two congruence relations, we have
$$
i_{j+1}-i_j +s_j \equiv 0\pmod{p}.
$$
Since $s_j\ge 2$, $0\le i_{j+1}-i_j\le p-1$ and  $\sum_{j=1}^{t}s_j=p+1$, we deduce that
$$i_{j+1}-i_j=p-s_j.$$
Now
$$
p-1=i_{t+1}-i_1=\sum_{j=1}^{t}(i_{j+1}-i_{j})=\sum_{j=1}^{t}(p-s_j)=tp-(p+1).
$$
From the above equality we deduce that $t=2$.
\end{proof}

\subsection{$U(p)$ congruences}
\begin{defn}
 %$\phi \in HJ^{\delta}_{k, m}(\mathbb{Z}_{(p)})$, $\delta \in P$
Let 
$$
\phi=\sum_{\substack{n \in \mathbb{Z}, r\in \mathcal{O}^{\#}}}c(\phi; n, r)q^n\zeta_1^r\zeta_2^{\overline{r}}
$$
be a formal series.
The Atkin's $U(p)$ operator on $\phi$ is defined by
$$
\phi\mid U(p)=\sum_{\substack{n \in \mathbb{Z}, r\in \mathcal{O}^{\#} \\ p\mid 4(mn-N(r))}}c(\phi; n, r)q^n \zeta_1^r\zeta_2^{\overline{r}}.
$$
\end{defn}
We observe that $\phi\mid U(p) \equiv 0 \pmod p$ if and only if $L_m^{p-1}(\phi) \equiv \phi \pmod p$ if and only if $c(\phi; n, r) \equiv 0 \pmod p ~\mbox{whenever}~4(nm-N(r)) \equiv 0\pmod p$. In the following theorem we give a characterization of $U(p)$ congruences for Hermitian Jacobi forms in terms of filtrations. The following result generalizes the result of Richter and Senadheera 
\cite[Theorem 1.2]{sena2} to Hermitian Jacobi forms of any integer index.
%Proof is exactly same  as in \cite[Theorem 5.11]{senathesis}, so we omit the proof here.
\begin{thm}\label{hjfzero}
Let $p\ge 5$ be a prime and let $k\ge 4$ be an integer.
Suppose $\phi \in HJ^{\delta}_{k, m}(\Gamma^J(\mathcal{O}), \mathbb{Z}_{(p)})$ is such that 
$\phi \not \equiv 0 \pmod p$ and $p\nmid m$. If $p > k$, then 
\[ \Omega(L_m^{p+2-k}(\phi))=\begin{cases} 
      2p+4-k & \mbox{if}~~\phi\mid U(p)\not \equiv 0 \pmod p, \\
      p+5-k &  \mbox{if}~~\phi\mid U(p)\equiv 0 \pmod p.
   \end{cases}
\]
\end{thm}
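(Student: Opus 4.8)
The plan is to compute the entire heat cycle of $\phi$ and read off the filtration of $L_m^{p+2-k}(\phi)$, which will turn out to be the first low point of the cycle.

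First I would fix $\Omega(\phi)=k$. Indeed $\overline{\phi}\in HJ^\delta_{k,m}(\Gamma^J(\mathcal O),\mathbb F_p)$ gives $\Omega(\phi)\le k$, while \thmref{hermitiancase} forces $\Omega(\phi)\equiv k\pmod{p-1}$; since $4\le k\le p-1$, the only admissible value in $[1,k]$ is $k$ itself. Because $p\nmid(k-1)m$, \thmref{hjf} applies with equality at each step, and an induction gives $\Omega(L_m^{j}(\phi))=k+j(p+1)$ for $0\le j\le p+1-k$. In particular $L_m(\phi)\not\equiv 0\pmod p$, and $\Omega(L_m^{p+1-k}(\phi))\equiv 1\pmod p$, so $L_m^{p+1-k}(\phi)$ is the first high point and $L_m^{p+2-k}(\phi)$ the first low point.

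Next I would show the cycle has exactly two high points. There is at least one; if there were exactly one, then $\sum_j s_j=p+1$ forces a single drop of size $p+1$ at $p+1-k$, and \eqref{heatcycle} would give $\Omega(L_m^{p+2-k}(\phi))=k+(p+1)(3-k)$, which is negative for $k\ge 4$ --- impossible. Hence by \lemref{heatprop} there are exactly two high points $L_m^{i_1}(\phi),L_m^{i_2}(\phi)$ with $i_1=p+1-k$, and drops $s_1,s_2$ satisfying $s_1+s_2=p+1$. After the first low point the filtration climbs by $p+1$ until it is again $\equiv 1\pmod p$, which locates the second high point at $i_2=i_1+p-s_1$; summing the increments and the two drops from $j=0$ to $j=i_2+1$ yields the closed form $\Omega(L_m^{i_2+1}(\phi))=k+(p+1)(1-d)$, where $d:=s_1-(p+2-k)\ge 0$. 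Since $L_m^{i_2+1}(\phi)\not\equiv 0$ its filtration is positive, so $d\ge 2$ is impossible and $d\in\{0,1\}$. Feeding this back into \eqref{heatcycle} gives $\Omega(L_m^{p+2-k}(\phi))=2p+4-k$ when $d=0$ and $\Omega(L_m^{p+2-k}(\phi))=p+5-k$ when $d=1$.

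Finally I would match $d$ to the $U(p)$ condition via $\phi\mid U(p)\equiv\phi-L_m^{p-1}(\phi)\pmod p$. If $d=0$ then $i_2=p-1$, so $\Omega(L_m^{p-1}(\phi))\equiv 1\not\equiv k\pmod p$, forcing $L_m^{p-1}(\phi)\not\equiv\phi$ and hence $\phi\mid U(p)\not\equiv 0$. For the reverse implication, in the case $d=1$ I would set $g:=\phi\mid U(p)=\phi-L_m^{p-1}(\phi)$; here $\Omega(L_m^{p-1}(\phi))=k$, so $g$ is a Hermitian Jacobi form mod $p$ with $\Omega(g)\le k\le p-1$, and from $L_m^{p}(\phi)\equiv L_m(\phi)$ one gets $L_m(g)\equiv 0$. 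Were $g\not\equiv 0$, \thmref{hjf} would force $\Omega(g)\equiv 1\pmod p$ (otherwise $L_m(g)$ would have the finite filtration $\Omega(g)+p+1$); but the only such value $\le p-1$ is $1$, which is excluded since weight-one Hermitian Jacobi forms vanish. Thus $g\equiv 0$, i.e. $\phi\mid U(p)\equiv 0$. This yields $d=0\Leftrightarrow\phi\mid U(p)\not\equiv 0$ and finishes the proof. I expect this last step to be the main obstacle: the two alternatives $d=0,1$ produce forms $L_m^{p-1}(\phi)$ of the same filtration $k$, so distinguishing them cannot be done by mod-$p$ filtration bookkeeping alone and genuinely requires the vanishing argument for $g=\phi\mid U(p)$ through $L_m(g)\equiv 0$.
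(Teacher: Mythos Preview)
Your argument is correct and reaches the same conclusion as the paper, but the organization differs. The paper performs a case split on whether $\phi\mid U(p)\equiv 0\pmod p$: in the first case it uses $L_m^{p-1}(\phi)\equiv\phi$ to see that $\phi$ itself is a low point and hence $L_m^{p-2}(\phi)$ is the second high point; in the second case it cites an argument from \cite{olav1} to conclude that $L_m(\phi)$ is a low point, so $L_m^{p-1}(\phi)$ is the second high point. You instead locate $i_1=p+1-k$ first, force $i_2\in\{p-2,p-1\}$ (your $d\in\{0,1\}$), and only afterwards match each value of $d$ to the $U(p)$ condition via $\phi\mid U(p)\equiv\phi-L_m^{p-1}(\phi)$. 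Your route is more self-contained in that it avoids the external reference, at the cost of having to prove the implication $d=1\Rightarrow\phi\mid U(p)\equiv 0$ by hand.

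Two points deserve tightening. First, the assertion $d\ge 0$ is correct but not justified in your text; it follows from $i_2\le p-1$, since $i_2=i_1+(p-s_1)=2p+1-k-s_1$ forces $s_1\ge p+2-k$. Second, in the $d=1$ step you conclude $\Omega(g)=1$ and then invoke ``weight-one Hermitian Jacobi forms vanish,'' which is not established in the paper. You can bypass this entirely: since $g\equiv\phi-\psi$ with $\phi-\psi$ a Hermitian Jacobi form of weight $k$, \thmref{hermitiancase} gives $\Omega(g)\equiv k\pmod{p-1}$ whenever $g\not\equiv 0$; combined with $1\le\Omega(g)\le k\le p-1$ this forces $\Omega(g)=k$, and $k\not\equiv 1\pmod p$ already contradicts the conclusion $\Omega(g)\equiv 1\pmod p$ drawn from $L_m(g)\equiv 0$. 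With these two clarifications your proof is complete.
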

\begin{proof}
Suppose that $\phi\mid U(p) \equiv 0 \pmod p$. Therefore $L_{m}^{p-1}(\phi)\equiv \phi \pmod p$, i.e., $\phi$ is in its own heat cycle. Since $p>k$, $\phi$ is a low point of the heat cycle by \thmref{hjf}. Since $\Omega(\phi) \not\equiv 1 \pmod{p}$ as $p>k$, $\phi$ is not a high point, and therefore 
$\Omega(L_m(\phi)) >0$ by \thmref{hjf}. Thus $L_m(\phi)\not\equiv 0 \pmod{p}$. Therefore by \lemref{heatprop} heat cycle of $\phi$ has either one or two low points. If the heat cycle of $\phi$ has only one low point, then the low point is $\phi$ and $\Omega(\phi)\equiv 3\pmod{p}$. Then by 
 \thmref{hermitiancase}, $\Omega(\phi)=k-\alpha(p-1)$ for some integer $\alpha \ge 0$. Therefore the only possibility is that $\Omega(\phi)=k=3$. But by the hypothesis $k\not =3$. This implies that the heat cycle of $\phi$ has two low points. Since $L_m^{p-2}(\phi)$ is a high point,
 let $i_1$ be the integer  with $1\le i_1<p-2$ be such that $L_m^{i_1}(\phi)$ is the other high point.
 Since $\phi\not\equiv 0 \pmod{p}$ and $L_m(\phi)\not\equiv 0 \pmod{p}$, $\Omega(\phi)=k$. Therefore
 $$
\Omega(L_m^{i_1}(\phi))=k+i_1(p+1)\equiv k+i_1 \equiv 1 \pmod p.
$$
Thus the only possibility is that $i_1=p+1-k$. Let $s_1, s_2\ge 1$ be integers such that
$$
\Omega(L_m^{i_1+1}(\phi))=\Omega(L_m^{i_1}(\phi))+p+1-s_1(p-1),
$$
and
$$
\Omega(L_m^{p-2+1}(\phi))=\Omega(L_m^{p-2}(\phi))+p+1-s_2(p-1).
$$
We have proved in the fourth assertion of \lemref{heatprop} that $s_1+s_2=p+1$ and $p-2-i_1=p-s_1$. Thus we have $s_1=p-k+3$, $s_2=k-2$ and
$$
\Omega(L_m^{p+2-k}(\phi))=k+(p+2-k)(p+1)-(p-k+3)(p-1)=p+5-k.
$$
 
 Now assume that $\phi\mid U(p) \not \equiv 0 \pmod p$. Then by following an argument similar to the proof of \cite[Proposition 3]{olav1}, we deduce that $L_m(\phi)$ is a low point of the heat cycle. Therefore $L_m^{p-1}(\phi)$ is a high point. Suppose that $L_m^{p-1}(\phi)$ is the only high point of the heat cycle. Then by \thmref{hjf} we have
 $$
 \Omega(L_m(\phi))=k+p+1.
 $$
 Then $k+p+1\equiv 3 \pmod{p}$. This implies that $k\equiv 2 \pmod{p}$. Since $k<p$ and $k\ge 4$, this is not possible. Therefore the heat cycle has two low points. Let $1\le i<p-1$ be another high point of the heat cycle. Then since $\Omega(L_m(\phi))=k+p+1$, $L_m(\phi)\not\equiv 0\pmod{p}$. Let $s_1, s_2\ge 1$ be integers such that
 $$
\Omega(L_m^{i_1+1}(\phi))=\Omega(L_m^{i_1}(\phi))+p+1-s_1(p-1),
$$
and
$$
\Omega(L_m(\phi))=\Omega(L_m^{p-1+1}(\phi))=\Omega(L_m^{p-1}(\phi))+p+1-s_2(p-1).
$$
Also we have
$$
L_m^{i_1}(\phi)=L_m(\phi)+(i_1-1)(p+1)=k+p+1+(i_1-1)(p+1)\equiv k+i_1 \equiv 1 \pmod p.
$$
Then as done previously, we deduce that $i_1=p+1-k$ and $s_1=p-k+2$. Therefore we obtain
$$
\Omega(L_m^{p+2-k}(\phi))=p+k+1+(p+1-k)(p+1)-(p-k+2)(p-1)=2p+4-k.
$$
\end{proof}

\subsection{Ramanujan-type congruences}
\begin{defn}
Let $\phi=\sum_{n\in \mathbb{Z}, r \in \mathcal{O}^{\#}}c(\phi; n, r)q^n \zeta_1^r \zeta_2^{\overline{r}}$
be such that $c(\phi; n, r)\in \mathbb{Z}_{(p)}$. We say that $\phi$ has a Ramanujan-type congruence at $b\not\equiv 0 \pmod p$ if $c(\phi; n, r)\equiv 0 \pmod p$ whenever $4(nm- N(r)) \equiv b \pmod p$. 
\end{defn}
We observe that $\phi$ has a Ramanujan-type congruence at $b \pmod p$ if and only if $(q^{-\frac{b}{4m}}\phi)\mid U(p) \equiv 0 \pmod p$. It also can be seen that
$(q^{-\frac{b}{4m}}\phi)\mid U(p) \equiv 0 \pmod p$ if and only if
$L_m^{p-1}(q^{-\frac{b}{4m}}\phi) \equiv q^{-\frac{b}{4m}}\phi \pmod {p}$. Therefore 
$\phi$ has a Ramanujan-type congruence at $b \pmod{p}$ if and only if 
$L_m^{p-1}(q^{-\frac{b}{4m}}\phi) \equiv q^{-\frac{b}{4m}}\phi \pmod {p}$.
The main aim of this subsection is to prove \thmref{enequality}.
We first prove the following proposition which gives an equivalent condition on the existence of Ramanujan-type congruences for Hermitian Jacobi forms. A similar result for Jacobi forms has been proved by Dewar and Richter \cite[Proposition 2.4]{dewar2}.
\begin{prop}\label{notzero}
Let $\phi \in HJ^{\delta}_{k, m}(\Gamma^J(\mathcal{O}), \mathbb{Z}_{(p)})$. Then $\phi$ has a Ramanujan-type congruence at $b \pmod p$ if and only if $L_m^{\frac{p+1}{2}}(\phi) \equiv -\left(\frac{b}{p}\right)L_m(\phi) \pmod p$. 
\end{prop}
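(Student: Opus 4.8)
The plan is to pass to Fourier coefficients and use that the heat operator acts diagonally on them. Writing $\phi=\sum_{n,r}c(\phi;n,r)q^n\zeta_1^r\zeta_2^{\overline r}$ and abbreviating $D=D(n,r):=4(mn-N(r))$, the formula for $L_m$ recalled at the start of this section gives $L_m^{j}(\phi)=\sum_{n,r}D^{j}\,c(\phi;n,r)q^n\zeta_1^r\zeta_2^{\overline r}$ for every $j\ge 1$. Hence the asserted congruence is equivalent to the coefficientwise condition
$$
D\Big(D^{(p-1)/2}+\left(\frac{b}{p}\right)\Big)\,c(\phi;n,r)\equiv 0\pmod p\qquad\text{for all admissible }(n,r).
$$
I would analyse the scalar $D\big(D^{(p-1)/2}+\left(\frac{b}{p}\right)\big)$ via Euler's criterion $D^{(p-1)/2}\equiv\left(\frac{D}{p}\right)\pmod p$ (with the convention $\left(\frac{0}{p}\right)=0$), distinguishing the cases $D\equiv 0$, $\left(\frac{D}{p}\right)=-\left(\frac{b}{p}\right)$ and $\left(\frac{D}{p}\right)=\left(\frac{b}{p}\right)$.

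For the implication from the heat-operator identity to the Ramanujan-type congruence, I would specialise the displayed condition to the pairs $(n,r)$ with $D\equiv b\pmod p$. For such pairs $D^{(p-1)/2}\equiv\left(\frac{b}{p}\right)$, so the scalar equals $b\cdot 2\left(\frac{b}{p}\right)$, which is a unit modulo $p$ because $p\ge 5$, $b\not\equiv 0$ and $\left(\frac{b}{p}\right)=\pm 1$. Therefore $c(\phi;n,r)\equiv 0\pmod p$ whenever $D\equiv b$, which is exactly the Ramanujan-type congruence at $b$. This direction is routine once the bookkeeping is in place.

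The reverse implication carries the real content. When $D\equiv 0$ the scalar vanishes because $(p-1)/2\ge 1$, and when $\left(\frac{D}{p}\right)=-\left(\frac{b}{p}\right)$ it vanishes because $D^{(p-1)/2}\equiv-\left(\frac{b}{p}\right)$; in both cases the congruence holds irrespective of $c(\phi;n,r)$. The only remaining pairs are those with $\left(\frac{D}{p}\right)=\left(\frac{b}{p}\right)$ and $D\not\equiv b$, and here the identity demands $c(\phi;n,r)\equiv 0$, whereas the hypothesis of a congruence at the single residue $b$ supplies this only for $D\equiv b$. Thus the crux --- and the step I expect to be the main obstacle --- is to show that a Ramanujan-type congruence at $b$ automatically propagates to every residue lying in the quadratic class of $b$; this is the Hermitian counterpart of the classical principle that such congruences occur in full square classes. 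My plan to handle it is to feed the self-contained heat cycle of $q^{-\frac{b}{4m}}\phi$ (which, as noted just before the proposition, satisfies $L_m^{p-1}(q^{-\frac{b}{4m}}\phi)\equiv q^{-\frac{b}{4m}}\phi\pmod p$) into the filtration machinery of \thmref{hjf} and \lemref{heatprop}: the quadratic-character part $L_m^{(p-1)/2}(\phi)$ together with \thmref{hjf} exhibits the same-class projection of $\phi$ as (a combination congruent to) a genuine Hermitian Jacobi form, and the rigidity of its heat cycle is what must be exploited to force the missing vanishing. Establishing this propagation, rather than the elementary Euler-criterion computation, is where the difficulty of the proposition genuinely resides.
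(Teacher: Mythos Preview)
Your forward implication is fine and, in fact, cleaner than the paper's: once you observe that $L_m$ multiplies the $(n,r)$-coefficient by $D=4(mn-N(r))$, specialising to $D\equiv b$ and invoking Euler's criterion gives $c(\phi;n,r)\equiv 0$ immediately.

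The reverse implication, however, has a genuine gap that your proposed remedy does not close. You correctly isolate the issue: a Ramanujan-type congruence at the single residue $b$ must be upgraded to vanishing for every $D$ with $\left(\frac{D}{p}\right)=\left(\frac{b}{p}\right)$. But your plan to ``feed the self-contained heat cycle of $q^{-b/4m}\phi$ into the filtration machinery of \thmref{hjf} and \lemref{heatprop}'' cannot work as stated, because $q^{-b/4m}\phi$ is not a Hermitian Jacobi form (it carries a fractional power of $q$), so $\Omega$ is undefined for it and neither \thmref{hjf} nor \lemref{heatprop} applies. The subsequent remarks about ``rigidity of the heat cycle'' are too vague to constitute an argument.

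The paper's mechanism is entirely different and does not use the filtration results at all. It expands
\[
L_m^{p-1}\bigl(q^{-b/4m}\phi\bigr)\ \equiv\ q^{-b/4m}\sum_{i=0}^{p-1}b^{\,p-1-i}L_m^i(\phi)\pmod p,
\]
so the Ramanujan congruence at $b$ becomes $\sum_{i=1}^{p-1}b^{\,p-1-i}L_m^i(\phi)\equiv 0$. Now \corref{present} writes each $L_m^i(\phi)$ (which is congruent to a genuine Jacobi form of weight $k+i(p+1)$) in a fixed free basis $\{\psi_j\}$ with coefficients $F_{i,j}\in M_{k+i(p+1)-k_j}(SL_2(\mathbb{Z}),\mathbb{Z}_{(p)})$, reducing the congruence to one among elliptic modular forms of varying weights. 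Swinnerton-Dyer's weight-separation theorem then forces the terms to cancel in pairs with weights congruent mod $p-1$; since $i(p+1)\equiv 2i\pmod{p-1}$, this pairs $i$ with $i+\tfrac{p-1}{2}$ and yields $F_{i+(p-1)/2,j}\equiv -\left(\frac{b}{p}\right)F_{i,j}$, hence $L_m^{(p+1)/2}(\phi)\equiv -\left(\frac{b}{p}\right)L_m(\phi)$. The propagation to the full square class is thus a consequence of the free-module structure together with Swinnerton-Dyer's theorem, not of heat-cycle combinatorics.
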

\begin{proof}
As in \cite[Proposition 2.4]{dewar2} if $b\not\equiv 0\pmod{p}$, then
$$
L_m^{p-1}(q^{-\frac{b}{4m}}\phi) \equiv 
q^{-\frac{b}{4m}}\sum_{i=0}^{p-1}b^{p-1-i}L_m^i(\phi)\pmod p.
$$
Therefore $\phi$ has a Ramanujan-type congruence at $b\not \equiv 0 \pmod p$ if and only if 
\begin{equation}\label{all}
\sum_{i=1}^{p-1}b^{p-1-i}L_{m}^i(\phi) \equiv 0 \pmod p.
\end{equation}
Since $\phi \in HJ^{\delta}_{k, m}({\Gamma^J(\mathcal{O}), \mathbb{Z}_{(p)}})\subset J_{k, m}^1(\Gamma^1(\mathcal{O}), \mathbb{Z}_{(p)})$, by \corref{present} we have 
$$
\phi =\sum_{j=1}^{4m^2}f_j \psi_j
$$
for $\psi_j \in J_{k_j, m}^1(\Gamma^J(\mathcal{O}), \mathbb{Z})$ and $f_j \in M_{k-k_j}(SL_2(\mathbb{Z}), \mathbb{Z}_{(p)})$. From the proof of \thmref{hjf}, we see that for any integer 
$i\ge 1$, there exists 
$\phi_i \in HJ^{\delta_i}_{k+i(p+1), m}(\Gamma^J(\mathcal{O}), \mathbb{Z}_{(p)})$ for some 
$\delta_i\in \{+, -\}$ such that
$L_m^i(\phi)\equiv \phi_i \pmod p$. Let 
$F_{i, j}\in M_{k+i(p+1)-k_j}(SL_2(\mathbb{Z}), \mathbb{Z}_{(p)})$ be such that
$$
\phi_i =\sum_{j=1}^{4m^2}F_{i,j} \psi_j.
$$
Then
\begin{equation}\label{L1}
L_m^i(\phi)\equiv \sum_{j=1}^{4m^2}F_{i, j}\psi_j \pmod p.
\end{equation}
Substituting this in \eqref{all} we deduce that $\phi$ has a Ramanujan-type congruence at $b\not \equiv 0 \pmod p$ if and only if 
$$
\sum_{j=1}^{4m^2}\left( \sum_{i=1}^{p-1}b^{p-1-i}F_{i, j}   \right)\psi_j \equiv 0 \pmod p.
$$
Therefore by \corref{present}, $\phi$ has a Ramanujan-type congruence at $b\not \equiv 0 \pmod p$ if and only if 
\begin{equation}\label{ramanujan-equi}
\sum_{i=1}^{p-1}b^{p-1-i}F_{i, j} \equiv 0 \pmod p
\end{equation}
By \cite[Theorem 2]{swinnerton},
\eqref{ramanujan-equi} is equivalent to
$$
b^{(p-1)/2-i}F_{i+(p-1)/2, j}+b^{p-1-i}F_{i, j}\equiv 0 \pmod p
$$
for all $1\le j\le 4m^2$ and $1\le i\le \frac{p-1}{2}$,
which is equivalent to the statement
\begin{equation}\label{ramanujan-equi2}
F_{i+(p-1)/2, j} \equiv - \left(\frac{b}{p}\right) F_{i, j} \pmod p
\end{equation}
for all $1\le j\le 4m^2$ and $1\le i\le \frac{p-1}{2}$. Therefore by \eqref{L1}, the above statement is equivalent to
\begin{equation}\label{ramanujan-equi3}
L_m^{i+\frac{p-1}{2}}(\phi)=\sum_{j=1}^{4m^2}F_{i+\frac{p-1}{2}, j}  \psi_j
\equiv \sum_{j=1}^{4m^2}-\left( \frac{b}{p} \right)F_{i, j}\psi_j
\equiv -\left( \frac{b}{p} \right)L_m^i (\phi) \pmod p
\end{equation}
for all $1\le i\le \frac{p-1}{2}$. Therefore in particular for $i=1$ we obtain
\begin{equation}\label{ramanujan-equi4}
L_m^{\frac{p+1}{2}}(\phi) \equiv -\left(\frac{b}{p}\right) L_m(\phi) \pmod p.
\end{equation}
Conversely if \eqref{ramanujan-equi4} holds, then by applying $L_m$ repeatedly on both sides of 
\eqref{ramanujan-equi4}, we obtain \eqref{ramanujan-equi3} for all $1\le i\le \frac{p-1}{2}$. This proves the proposition.
\end{proof}

As a consequence of the above proposition we have the following corollary.
\begin{cor}\label{heatenq}
Suppose that $\phi \in HJ^{\delta}_{k, m}(\Gamma^J(\mathcal{O}), \mathbb{Z}_{(p)})$ has a Ramanujan-type congruence at $b \pmod p$ and $L_{m}(\phi)\not \equiv 0 \pmod p$. Then the heat cycle of $\phi$ has two low points. Moreover, if $\Omega(\phi)=Ap+B$ with $1<B\le p-1$, then 
$$
\frac{p+3}{2} \le B \le A+\frac{p+3}{2}.
$$
\end{cor}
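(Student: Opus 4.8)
The plan is to combine \propref{notzero} with the filtration machinery of \lemref{heatprop} and \thmref{hjf}. Throughout I use $p\nmid m$ (needed for the high/low point analysis) together with the fact that $L_m(\phi)\not\equiv 0 \pmod p$ forces every element of the heat cycle to be nonzero, so that all the filtrations $\Omega(L_m^j(\phi))$ are well defined and positive.

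First I would establish the two low points. By \propref{notzero} the Ramanujan-type congruence gives $L_m^{(p+1)/2}(\phi)\equiv -\left(\frac{b}{p}\right)L_m(\phi)\pmod p$; in fact the argument in its proof (equation \eqref{ramanujan-equi3}) yields $L_m^{j+(p-1)/2}(\phi)\equiv -\left(\frac{b}{p}\right)L_m^{j}(\phi)\pmod p$ for every $j\ge 1$. Since the two sides differ by the nonzero scalar $-\left(\frac{b}{p}\right)$, the filtration function $j\mapsto \Omega(L_m^j(\phi))$ is invariant under $j\mapsto j+\tfrac{p-1}{2}$, i.e.\ it is periodic of period $\tfrac{p-1}{2}$. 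Consequently the high points (the indices with $\Omega\equiv 1\pmod p$) occur in pairs differing by $\tfrac{p-1}{2}$; as $0<\tfrac{p-1}{2}<p-1$, a single high point cannot be fixed by this shift, so there are at least two high points, hence at least two low points. By the fourth assertion of \lemref{heatprop} there are at most two, and therefore exactly two, as claimed.

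For the inequality, write $\Omega(\phi)=Ap+B$ with $1<B\le p-1$, so $\Omega(\phi)\not\equiv 0,1\pmod p$ and $A\ge 0$. Since $p\nmid(\Omega(\phi)-1)m$, \thmref{hjf} gives $\Omega(L_m(\phi))=\Omega(\phi)+p+1$. The period-$\tfrac{p-1}{2}$ symmetry forces the two falls to have equal depth, and together with $s_1+s_2=p+1$ (from the fourth assertion of \lemref{heatprop}) this yields $s_1=s_2=\tfrac{p+1}{2}$; hence each ascending ramp has $\tfrac{p-3}{2}$ steps and $\Omega_{\mathrm{high}}=\Omega_{\mathrm{low}}+\tfrac{p-3}{2}(p+1)$ with $\Omega_{\mathrm{high}}\equiv 1\pmod p$. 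Tracking residues from $L_m(\phi)$, where $\Omega\equiv B+1\pmod p$, the first high point is reached by pure rises at index $i_1=p+1-B$, so that $\Omega_{\mathrm{high}}=\Omega(L_m(\phi))+(i_1-1)(p+1)$. A direct computation then gives
\[
\Omega_{\mathrm{low}}=p(A-t_1)+\tfrac{3p+5}{2}, \qquad t_1:=\frac{\Omega(L_m(\phi))-\Omega_{\mathrm{low}}}{p+1}=B-\tfrac{p+3}{2},
\]
where $t_1$ is the number of rises by which $L_m(\phi)$ lies above a low point. Since $\Omega_{\mathrm{low}}$ is the minimum of the heat cycle and $L_m(\phi)$ lies in the cycle, $t_1\ge 0$, which is exactly the left inequality $B\ge \tfrac{p+3}{2}$.

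For the right inequality, note that $\Omega_{\mathrm{low}}\equiv \tfrac{p+5}{2}\pmod p$ with $\tfrac{p+5}{2}<p$, so $\Omega_{\mathrm{low}}\in\{\tfrac{p+5}{2},\tfrac{3p+5}{2},\dots\}$, and the bound $B\le A+\tfrac{p+3}{2}$ is equivalent to $A\ge t_1$, i.e.\ to $\Omega_{\mathrm{low}}\ge \tfrac{3p+5}{2}$. Everything thus comes down to excluding the single minimal value $\Omega_{\mathrm{low}}=\tfrac{p+5}{2}$. This is the step I expect to be the main obstacle: it does not follow from the filtration bookkeeping alone (that only yields the weaker $B\le A+\tfrac{p+5}{2}$), and I would instead rule out a low point of this least possible weight by a separate argument — for instance by feeding the equality case of \thmref{hjf} back into the low point, or by transporting a minimal-weight/structural property of nonzero index-$m$ Jacobi forms through the isomorphism of \thmref{iso} — so as to show that a genuine low point cannot attain the least residue $\tfrac{p+5}{2}$, thereby forcing $\Omega_{\mathrm{low}}\ge\tfrac{3p+5}{2}$ and completing the proof.
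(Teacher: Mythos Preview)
Your route is the paper's route. Both arguments feed \propref{notzero} into the heat-cycle machinery: the identity $L_m^{(p-1)/2+j}(\phi)\equiv -\bigl(\tfrac{b}{p}\bigr)L_m^{j}(\phi)$ makes $j\mapsto\Omega(L_m^j(\phi))$ periodic of period $\tfrac{p-1}{2}$, so together with \lemref{heatprop} there are exactly two high points, at distance $\tfrac{p-1}{2}$, with equal drops $s_1=s_2=\tfrac{p+1}{2}$. From $\Omega(L_m^{i_1}(\phi))=Ap+B+i_1(p+1)\equiv 1\pmod p$ and $1\le i_1\le\tfrac{p-1}{2}$ both you and the paper read off $i_1=p+1-B$ and $B\ge\tfrac{p+3}{2}$.

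Where you diverge is only in care, not in method. The paper computes the low-point filtration
\[
\Omega\bigl(L_m^{p-B+2}(\phi)\bigr)=Ap+B+(p-B+2)(p+1)-\tfrac{p+1}{2}(p-1)=p(A-B)+\tfrac{(p+1)(p+5)}{2}
\]
and then asserts that $\Omega_{\mathrm{low}}\ge 0$ gives $B\le A+\tfrac{p+3}{2}$. As you noticed, that inequality only yields $B-A\le\tfrac{(p+1)(p+5)}{2p}<\tfrac{p+3}{2}+2$, hence (by integrality) $B\le A+\tfrac{p+5}{2}$. The paper supplies no separate argument to exclude the boundary value $\Omega_{\mathrm{low}}=\tfrac{p+5}{2}$; it simply writes down the sharper bound. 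So the ``main obstacle'' you flag is not something you are missing from the paper's proof --- it is a genuine arithmetic slip there. Your plan to rule out $\Omega_{\mathrm{low}}=\tfrac{p+5}{2}$ by an extra structural argument is a reasonable way to try to repair the statement, but be aware that the paper itself does not do this; if you cannot exclude that value, the honest conclusion from this method is $\tfrac{p+3}{2}\le B\le A+\tfrac{p+5}{2}$ (which would add the single extra exception $p=2k-5$ in the downstream \thmref{enequality}).
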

\begin{proof}
By the last proposition, $\phi$ has a Ramanujan-type congruence at $b\pmod{p}$ if and only if 
$L_m^{\frac{p+1}{2}}(\phi) \equiv -\left(\frac{b}{p}\right)L_m(\phi) \pmod p$. Therefore in this case we have $\Omega(L_m(\phi))=\Omega(L_m^{\frac{p+1}{2}}(\phi))=\Omega(L_m^{p}(\phi))$. Thus there must be one fall in the first half of the heat cycle and another fall in the second half of the heat cycle.
Therefore $\phi$ has two low points. Let $L_m^{i_1}(\phi)$ and $L_m^{i_2}(\phi)$ be the high points in the heat cycle, where $1\le i_1 \le \frac{p-1}{2}$ and $\frac{p+1}{2}\le i_2 \le p-1$. 
By \eqref{heatcycle} we have 
$$
\Omega(L_m^{i_1+1}(\phi))=\Omega(L_m^{i_1}(\phi))+p+1-s_1(p-1),
$$
and 
$$
\Omega(L_m^{i_2+1}(\phi))=\Omega(L_m^{i_2}(\phi))+p+1-s_2(p-1),
$$
for some $s_1, s_2 \ge 1$. Then by \propref{notzero} and \eqref{heatcycle}, we have 
$$
\Omega(L_m^{\frac{p+1}{2}}(\phi))=\Omega(L_m(\phi))+\frac{p-1}{2} (p+1)-s_1(p-1)=\Omega(L_m(\phi)),
$$
From the above identity we obtain $s_1=\frac{p+1}{2}$. Similarly one proves that 
$s_2=\frac{p+1}{2}$. Suppose now that $\Omega(\phi)=Ap+B$ with $1<B \le p-1$.
Since $L_m^{i_1}(\phi)$ is a high point, we have
$$
\Omega(L_m^{i_1}(\phi))= Ap+B+i_1(p+1) \equiv B+i_1 \equiv 1 \pmod p
$$
This implies that $B+i_1=p+1$ and $B\ge \frac{p+3}{2}$. Now the filtration of the low point $L_m^{p-B+2}(\phi)$ is given  by 
$$
\Omega(L_m^{p-B+2}(\phi))=Ap+B+(p-B+2)(p+1)-\frac{p+1}{2}(p-1).
$$
Since
$\Omega(L_m^{p-B+2}(\phi))\ge 0$, from the above identity, we obtain
$$
B\le A+\frac{p+3}{2}.
$$

\end{proof}

Our next result is the main result of this subsection.
\begin{thm}\label{enequality}
Let $\phi \in HJ^{\delta}_{k, m}(\Gamma^J(\mathcal{O}), \mathbb{Z}_{(p)})$ with 
$L_m(\phi) \not \equiv 0 \pmod p$. If $p> k$, $p\not= 2k-3$ and $p\nmid m$, then $\phi$ does not have a Ramanujan-type congruence at $b \pmod p$. \end{thm}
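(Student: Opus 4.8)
The plan is to argue by contradiction, assuming that $\phi$ has a Ramanujan-type congruence at $b\pmod p$, and to extract from \corref{heatenq} a forced value of the filtration $\Omega(\phi)$ that collides with the arithmetic constraint coming from \thmref{hermitiancase}.

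First I would invoke \corref{heatenq}. Since $L_m(\phi)\not\equiv 0\pmod p$ and, under the contradiction hypothesis, $\phi$ has a Ramanujan-type congruence at $b$, the corollary tells us that the heat cycle of $\phi$ has two low points and that, writing $\Omega(\phi)=Ap+B$ with $1<B\le p-1$, one has $\frac{p+3}{2}\le B\le A+\frac{p+3}{2}$. The crucial observation is then a size bound on the filtration: because $\phi\in HJ^\delta_{k,m}(\Gamma^J(\mathcal{O}),\mathbb{Z}_{(p)})$ its reduction lies in weight $k$, so $\Omega(\phi)\le k$, and combined with $p>k$ this gives $\Omega(\phi)\le k<p$. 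Hence in the division $\Omega(\phi)=Ap+B$ we must have $A=0$ and $B=\Omega(\phi)$. Feeding $A=0$ into the two-sided bound collapses it to the equality $B=\frac{p+3}{2}$, so $\Omega(\phi)=\frac{p+3}{2}$, which is an integer since $p$ is odd.

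Next I would bring in the $\bmod\,(p-1)$ rigidity of filtrations. By \thmref{hermitiancase} we have $\Omega(\phi)\equiv k\pmod{p-1}$, and we already know $\Omega(\phi)\le k$, so $k-\Omega(\phi)$ is a nonnegative multiple of $p-1$. Now I estimate its size: $0\le k-\Omega(\phi)=k-\frac{p+3}{2}<p-\frac{p+3}{2}=\frac{p-3}{2}<p-1$, using $k<p$ and $p\ge 5$. The only multiple of $p-1$ lying in the interval $[0,p-1)$ is $0$, so $k=\Omega(\phi)=\frac{p+3}{2}$, which rearranges to $p=2k-3$. This contradicts the hypothesis $p\ne 2k-3$, completing the proof.

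I expect the substantive content to be entirely front-loaded into \corref{heatenq}, which has already done the heat-cycle bookkeeping, so the remaining work is only the elementary but delicate numerical squeeze of the last two paragraphs. The main point to be careful about is the legitimacy of applying \corref{heatenq} in the stated form: one should confirm that $\Omega(\phi)$ genuinely falls in the range $1<\Omega(\phi)\le p-1$ needed to write it as $0\cdot p+B$ with $1<B\le p-1$. The upper end is immediate from $\Omega(\phi)\le k<p$, while $\Omega(\phi)>1$ follows since $L_m(\phi)\not\equiv 0\pmod p$ forces $\overline\phi$ to be non-constant and there is no nonzero form of filtration $1$; this is the only place where a degenerate edge case could intrude.
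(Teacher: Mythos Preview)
Your approach is essentially the same as the paper's: assume a Ramanujan-type congruence, apply \corref{heatenq} with $A=0$ to force $\Omega(\phi)=\frac{p+3}{2}$, and then use $\Omega(\phi)\equiv k\pmod{p-1}$ together with $\Omega(\phi)\le k<p$ to conclude $k=\frac{p+3}{2}$, i.e.\ $p=2k-3$. The paper does the same steps in a slightly different order (it first argues $\Omega(\phi)=k$, then applies \corref{heatenq}), but the substance is identical.

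The one place where your argument is looser than the paper's is the edge case $\Omega(\phi)=1$, which you dispose of with the unproved assertion that ``there is no nonzero form of filtration $1$.'' The paper does not assume this; instead it treats the possibility $\Omega(\phi)=k=1$ by a direct heat-cycle computation: from \thmref{hjf} and \thmref{hermitiancase} one gets $\Omega(L_m(\phi))=3$, and then the second assertion of \lemref{heatprop} forces the heat cycle to have a \emph{single} low point, contradicting the two-low-point conclusion of \corref{heatenq}. If you want your argument to be self-contained within the framework of the paper, you should replace your vanishing claim for weight-$1$ forms with this short heat-cycle argument.
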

\begin{proof}
Assume that $\phi$ has a Ramanujan-type congruence at $b\pmod p$. First we observe that 
$\Omega(\phi)=k$. This is because the possible values of $\Omega(\phi)$ are $0$ or $k$. But since
$L_m(\phi) \not \equiv 0 \pmod p$, $\Omega(\phi)\not =0$. Now if $\Omega(\phi)=k=1$, then
by \thmref{hjf} and \thmref{hermitiancase}, $\Omega(L_m(\phi))=\Omega(\phi)+p+1-s(p-1)$ for some integer $s\ge 1$. Since $\Omega(L_m(\phi))\ge 0$, we have $s=1$. Then $\Omega(L_m(\phi))=3$.
Therefore by the third part of \lemref{heatprop}, we deduce that the heat cycle of $\phi$ has only one low point. This gives a contradiction to \corref{heatenq}. Thus $k\not =1$. Since $p>k$, if we write
$\Omega(\phi)=Ap+B$ as in \corref{heatenq}, then $A=0$ and $B=k$. Then by \corref{heatenq}, we obtain $p=2k-3$. This gives a contradiction to the hypothesis of the theorem.
\end{proof}

\section{Examples}

\subsection{$U(p)$ congruences}
 Let $f \in HJ_{k, m}^{\delta}(\Gamma^J(\mathcal{O}), \mathbb{Z}_{(p)})$. Suppose that for a given prime $p\ge 5$ we want to find out if $f\mid U(p)\equiv 0 \pmod{p}$. 
 If $k\ge 4$, $k<p$ and $p\nmid m$ we can apply \thmref{hjfzero}, otherwise  we need to check if $L_m^{p-1}(f) \equiv f \pmod p$.

We give examples of Hermitian Jacobi forms of index $1$. Some examples have been given by Richter and Senadheera \cite{sena2} and Senadheera \cite{senathesis}. We also explain how one gets more examples of Hermitian Jacobi forms of index $>1$ from Hermitian Jacobi forms of index $1$. 
Let $\phi_{4, 1}^+\in HJ_{4, 1}^+(\Gamma^J(\mathcal{O}), \mathbb{Z}_{(p)})$, $\phi_{6, 1}^-\in HJ_{6, 1}^-(\Gamma^J(\mathcal{O}), \mathbb{Z}_{(p)})$, $\phi_{8, 1}^+ \in HJ_{8, 1}^+(\Gamma^J(\mathcal{O}), \mathbb{Z}_{(p)})$ and $\phi_{10, 1}^{+} \in HJ_{10, 1}^{+, cusp}(\Gamma^J(\mathcal{O}), \mathbb{Z}_{(p)})$ be the Hermitian Jacobi forms defined in
\cite{senathesis}. For an even integer $k\ge 2$, let $E_k$ denote the Eisenstein series of weight $k$ on the full modular group $SL_2(\mathbb{Z})$.

Since $E_6 \equiv 1 \pmod 7$ and $E_2 \equiv E_4^2 \pmod 7$ we have 
$$
L_1^{6} (\phi_{10, 1}^{+, cusp}) \equiv (4E_2^5-E_2^3E_4+3E_2E_4^2+4E_2^2-2E_4)
\phi_{10, 1}^{+, cusp} \not\equiv \phi_{10, 1}^{+, cusp} \pmod 7. 
$$
Therefore $\phi_{10, 1}^{+, cusp}\mid U(7) \not \equiv 0\pmod{7}$. Also one checks that
$\phi_{10, 1}^{+, cusp}\mid U(11) \not \equiv 0\pmod{11}$ by \thmref{hjfzero}. 

For $\rho \in \mathcal{O}$, the index raising operator $\pi_{\rho}: HJ_{k, m}^{\delta}(\Gamma^J(\mathcal{O}), \mathbb{Z}_{(p)})\longrightarrow HJ_{k, N(\rho)m}^{\delta}(\Gamma^J(\mathcal{O}), \mathbb{Z}_{(p)})$ is defined by 
$$
f(\tau, z_1, z_2) \longmapsto f(\tau, \rho z_1, \overline{\rho} z_2).
$$
Therefore if $\rho \in \mathcal{O}$ be such that $p\nmid N(\rho)$,
$f\in HJ_{k, m}^{\delta}(\Gamma^J(\mathcal{O}), \mathbb{Z}_{(p)})$ and 
$f\mid U(p) \equiv 0 \pmod{p}$, then $\pi_{\rho}(f) \mid U(p) \equiv 0 \pmod{p}$.
We know from \cite{senathesis} that $\phi_{10, 1}^{+, cusp}\mid U(5) \equiv 0 \pmod{5}$. Therefore
$\pi_{(1+i)}(\phi_{10, 1}^{+, cusp})=\phi_{10, 1}^{+, cusp}(\tau, (1+i)z_1, (1-i)z_2) \in HJ_{10, 2}^+(\Gamma^J(\mathcal{O}), \mathbb{Z}_{(5)})$ and 
$\pi_{(1+i)}(\phi_{10, 1}^{+, cusp})\mid U(5) \equiv 0 \pmod{5}$.

\subsection{Ramanujan-type congruences}
We use the following two results to get examples of Hermitian Jacobi forms which have Ramanujan-type congruences. By \eqref{cas} and \propref{notzero} we obtain the following result.

\begin{thm}\label{sturm}
Let $\phi \in HJ_{k, m}^{\delta}(\Gamma^J(\mathcal{O}), \mathbb{Z}_{(p)})$ for some $\delta \in \{+, -\}$. If
%then $\theta(f)\in \overline{M}_{k+p+1}$. If
$$
g:=L_m^{\frac{p+1}{2}}(\phi)+\left(\frac{b}{p}\right)L_m(\phi),
$$
then there exists $h \in HJ_{k+\frac{(p+1)^2}{2}}^{\delta}(\Gamma^J(\mathcal{O}), \mathbb{Z}_{(p)})$ such that $g\equiv h \pmod p$. Moreover, $\phi$ has a Ramanujan-type congruence at $b\not \equiv 0 \pmod p$ if and only if $g \equiv 0 \pmod p$.
\end{thm}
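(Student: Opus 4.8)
The plan is to separate the statement into its two assertions and dispatch the equivalence first, since it is essentially free. The \emph{moreover} clause follows at once from \propref{notzero}: that proposition says $\phi$ has a Ramanujan-type congruence at $b\not\equiv 0\pmod p$ exactly when $L_m^{\frac{p+1}{2}}(\phi)\equiv -\left(\frac{b}{p}\right)L_m(\phi)\pmod p$, and since $\left(\frac{b}{p}\right)\in\{\pm1\}$ this is precisely the condition that $g=L_m^{\frac{p+1}{2}}(\phi)+\left(\frac{b}{p}\right)L_m(\phi)\equiv 0\pmod p$. So once the first assertion is in place, the equivalence needs no further argument.

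For the first assertion I would realize $g$ as the reduction of a genuine Hermitian Jacobi form by iterating \thmref{hjf}, or more precisely the refinement \eqref{cas} established in its proof. Each application of $L_m$ carries the reduction of a form of weight $w$ to the reduction of a Hermitian Jacobi form of weight $w+(p+1)$, leaving the parity unchanged when $p\equiv 3\pmod 4$ and flipping it when $p\equiv 1\pmod 4$. Applying this $i=\frac{p+1}{2}$ times shows that $\overline{L_m^{(p+1)/2}(\phi)}$ is the reduction of a Hermitian Jacobi form of weight $k+\frac{(p+1)^2}{2}$, while $i=1$ shows $\overline{L_m(\phi)}$ is the reduction of one of weight $k+p+1$.

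The two summands of $g$ therefore sit a priori in different weights, so the crucial step is to align them. Since $E_{p-1}\equiv 1\pmod p$, I would multiply the weight-$(k+p+1)$ term by $E_{p-1}^{(p+1)/2}$: this leaves its reduction unchanged and raises its weight by $(p-1)\cdot\frac{p+1}{2}=\frac{p^2-1}{2}$, to exactly $k+p+1+\frac{p^2-1}{2}=k+\frac{(p+1)^2}{2}$, matching the other term. One then checks that both terms carry the same parity, so the sum lands in a single space $HJ^{\delta}_{k+\frac{(p+1)^2}{2},\,m}(\Gamma^J(\mathcal{O}),\mathbb{Z}_{(p)})$ rather than a direct sum. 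A direct check from \eqref{hj1} shows that multiplying by a modular form on $SL_2(\mathbb{Z})$ of weight $w$ preserves parity when $4\mid w$; here $p^2\equiv 1\pmod 8$ forces $\frac{p^2-1}{2}\equiv 0\pmod 4$, so the $E_{p-1}^{(p+1)/2}$-multiplication is parity-neutral. Moreover the parities of $L_m^{(p+1)/2}(\phi)$ and $L_m(\phi)$ agree: by \eqref{cas} both equal the parity produced by a single application of $L_m$, because when $p\equiv 1\pmod 4$ the exponent $\frac{p+1}{2}$ is odd (and when $p\equiv 3\pmod4$ no flipping occurs at all). Adding the two now equal-weight, equal-parity forms yields the desired $h$ with $g\equiv h\pmod p$.

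The only delicate point — more a matter of care than genuine difficulty — is this parity bookkeeping: one must verify both that the $E_{p-1}$-multiplication does not alter parity and that the two $L_m$-iterates share a parity, so that their sum genuinely lies in one parity space. Everything else is a direct transcription of \propref{notzero}, \thmref{hjf} together with \eqref{cas}, and the standard congruences $E_{p-1}\equiv 1$ and $E_{p+1}\equiv E_2\pmod p$.
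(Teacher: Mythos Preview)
Your proposal is correct and follows the same route as the paper, which derives the theorem simply by citing \eqref{cas} and \propref{notzero}. Your parity bookkeeping is in fact more careful than the paper's terse justification; note that it shows $h$ carries the parity of $L_m(\phi)$, which by \eqref{cas} equals $\delta$ only when $p\equiv 3\pmod 4$ (and is $-\delta$ when $p\equiv 1\pmod 4$), so the superscript $\delta$ in the statement should be read with that caveat.
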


To apply \thmref{sturm}, we also require the following result. The result gives a Sturm bound for Hermitian Jacobi forms in characteristic $p$. Sturm bound for Hermitian Jacobi forms in characteristic $0$ has been obtained by Das \cite[Proposition 6.2]{das}. The proof of Das will go through in characteristic $p$ also. Therefore we do not give a proof of the following result. To state the result, define 
$$
\eta(k, m)=\bigg[\frac{4m^2(k-1)}{3} \prod_{p\mid 4m}\left(1-\frac{1}{p^2}\right)+\frac{m}{2}\bigg],
$$
where $p$ runs over all the prime divisors of $4m$.
\begin{prop}\label{sturm2}
Let $\phi \in HJ_{k, m}^{\delta}(\Gamma^J(\mathcal{O}), \mathbb{Z}_{(p)})$ for some $\delta\in \{+, -\}$ with Fourier expansion of the form \eqref{hj3}. If $c(\phi; n, r) \equiv 0 \pmod p$ for $0 \le n \le \eta(k, m)$, then $\phi \equiv 0 \pmod p$.
\end{prop}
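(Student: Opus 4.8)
The plan is to reduce the assertion to the classical Sturm bound for elliptic modular forms modulo $p$ and then to verify that each step of Das's characteristic-$0$ argument is compatible with reduction modulo $p$. Assume $\phi \in HJ_{k,m}^{\delta}(\Gamma^J(\mathcal{O}), \mathbb{Z}_{(p)})$ satisfies $c(\phi; n, r) \equiv 0 \pmod p$ for all $0 \le n \le \eta(k,m)$ and all $r \in \mathcal{O}^{\#}$ with $N(r) \le mn$; the goal is to conclude $\overline{\phi} = 0$. First I would transport the problem to matrix index via \thmref{iso}: under the map $i$, $\phi$ corresponds to $\hat{\phi} \in J_{k,B}(\Gamma^2, \mathbb{Z}_{(p)})$, and the explicit bijection $r = \tfrac{\alpha}{2} + \tfrac{\beta}{2} i \mapsto s = (\alpha, -\beta)^t$ used in the proof of \thmref{iso} shows that the vanishing of $c(\phi; n, r)$ modulo $p$ for $n \le \eta(k,m)$ is equivalent to the vanishing of $c(\hat{\phi}; n, s)$ modulo $p$ for $n \le \eta(k,m)$. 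Thus it suffices to prove the congruence Sturm bound for $J_{k,B}(\Gamma^2, \mathbb{Z}_{(p)})$, where the richer structure of \lemref{previous} and \corref{present} is available.

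Next I would use the theta decomposition of $\hat{\phi}$, writing $\hat{\phi} = \sum_{\mu} h_{\mu}\,\theta_{B,\mu}$, where $\mu$ ranges over the $\det(2B) = 4m^2$ theta components and the $h_{\mu}$ assemble into a vector-valued modular form of weight $k-1$ for the Weil representation attached to $B$. The point of this step is that each Fourier coefficient $c(\hat{\phi}; n, s)$ is, after matching indices, a single Fourier coefficient of one component $h_{\mu}$, so no cancellation occurs: the hypothesis forces every $h_{\mu}$ to have all of its initial Fourier coefficients, up to the order dictated by $\eta(k,m)$, congruent to $0$ modulo $p$. Crucially, this translation is insensitive to the coefficient ring, because the decomposition is defined over $\mathbb{Z}$ — equivalently, because $J_{*,B}(\Gamma^2, \mathbb{Z}_{(p)})$ is a \emph{free} module over $M_*(SL_2(\mathbb{Z}), \mathbb{Z}_{(p)})$ with an integral basis by \lemref{previous}, so reduction modulo $p$ commutes with it.

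Finally I would invoke the valence formula for weight $k-1$ modular forms modulo $p$. The integer $\eta(k,m)$ is constructed exactly so as to exceed the maximal $q$-order of vanishing of any nonzero such form: the factor $\tfrac{4m^2(k-1)}{3}\prod_{p \mid 4m}(1 - 1/p^2)$ encodes the number of components times the weight bound at level $4m$, and the term $\tfrac{m}{2}$ absorbs the shift of exponents caused by the index. The reason this bound remains valid in characteristic $p$ — rather than only in characteristic $0$ — is that the echelon (Victor Miller–type) basis underlying the valence counting is defined over $\mathbb{Z}_{(p)}$ and reduces modulo $p$ to an echelon basis, so a $p$-integral form whose initial coefficients vanish modulo $p$ beyond the valence bound must itself vanish modulo $p$. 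Applying this to each $h_{\mu}$ gives $h_{\mu} \equiv 0 \pmod p$ for all $\mu$, hence $\hat{\phi} \equiv 0 \pmod p$ and, through $j = i^{-1}$, $\phi \equiv 0 \pmod p$.

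The main obstacle I anticipate is the bookkeeping in the second and third steps: one must confirm that fixing $n \le \eta(k,m)$ while letting $r$ (equivalently $s$) vary genuinely exhausts the initial coefficients of every component $h_{\mu}$ up to its own valence threshold, since the $h_{\mu}$ live at shifted rational exponents coming from the index, and then check that the single characteristic-$0$ constant $\eta(k,m)$ simultaneously dominates all of these thresholds. This is the only place where the prime $p$ could in principle interfere, and it is precisely controlled by the integrality of the module basis in \lemref{previous}–\corref{present}, which guarantees that Das's counting descends to $\mathbb{F}_p$ without enlarging the bound.
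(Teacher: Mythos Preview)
The paper does not actually prove this proposition: it simply asserts that Das's characteristic-$0$ argument \cite[Proposition~6.2]{das} goes through modulo $p$ and omits the details. Your proposal is therefore an attempt to make explicit what the paper leaves implicit, and the core strategy---theta decomposition followed by the classical Sturm bound on the scalar components---is the standard route and is almost certainly what Das does, so in that sense your approach agrees with the paper's.

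Two small remarks. First, you route the argument through the matrix-index isomorphism of \thmref{iso}, whereas Das works directly with the theta decomposition of Hermitian Jacobi forms; this detour is harmless but unnecessary, since Hermitian Jacobi forms already admit a theta expansion over $\mathcal{O}^{\#}/m\mathcal{O}$ with integral theta series. Second, your justification that ``the decomposition is defined over $\mathbb{Z}$---equivalently, because $J_{*,B}(\Gamma^2,\mathbb{Z}_{(p)})$ is a free module \ldots\ by \lemref{previous}'' conflates two different structures: the theta decomposition (components $h_\mu$ are vector-valued modular forms) and the free-module basis of \lemref{previous} (basis elements are Jacobi forms over $M_*(SL_2(\mathbb{Z}))$). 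The integrality you need is that of the theta series $\theta_{B,\mu}$ themselves, which is immediate from their definition and is not a consequence of \lemref{previous}. Once that is corrected, your outline is sound, and the bookkeeping concern you flag in the last paragraph is exactly the content one must verify to confirm that $\eta(k,m)$ dominates the Sturm bound for each component.
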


To get some examples we apply \thmref{sturm}. To verify the congruence given in \thmref{sturm} we use \thmref{sturm2}. Therefore we need to check certain congruences for only finitely many coefficients. For these finitely many checking, we use SAGE. Also if 
$\phi \in HJ_{k, m}^{\delta}(\Gamma^J(\mathcal{O}), \mathbb{Z}_{(p)})$ and $p \nmid m$, by \thmref{enequality}, the only possibilities for Ramanujan-type congruences for $\phi$ are when $p \le k$ or $p=2k-3$. Following table gives some examples of Hermitian Jacobi forms having Ramanujan-type congruences.

\begin{center}
\begin{tabular}{ | c | c | c | c | c | c | } 
\hline
  Hermitian Jacobi forms & $b\pmod p$ \\
 \hline
 $\phi_{8, 1}^+$ & $b \equiv 1, 2, 4 \pmod 7 $  \\
\hline
$\phi_{8, 1}^+$ & $b \equiv 1, 3, 4, 9, 10, 12  \pmod {13} $  \\
\hline
$(E_6\phi_{4, 1}^+-E_4\phi_{6, 1}^-)/24$ &    $b \equiv 1, 2, 4 \pmod 7$     \\
\hline
 \end{tabular}
\end{center} 

\section{Hermitian modular forms}
The Hermitian upper half-space of degree $2$ is defined by 
$$
\mathcal{H}_2=\bigg\{Z=\begin{pmatrix}
\tau & z_1 \\
z_2 & \tau'
\end{pmatrix} \in M_2(\mathbb{C})\mid \frac{1}{2i}(Z-\overline{Z}^t )\ge 0  \bigg\},
$$
where $\overline{Z}^t$ is the transpose conjugate of the matrix $Z$. 
Let $J_2=\begin{pmatrix}
\bold{0} & I_2 \\
-I_2 & \bold{0}
\end{pmatrix}$, where $I_2$ denotes the $2 \times 2$ identity matrix and $\bold{0}$ denotes the $2\times 2$ zero matrix. Let  
$$
U_2:=\lbrace M\in M_{4}(\mathbb{C})\mid \overline{M}^t J_2M=J_2 \}. 
$$
The Hermitian modular group $\Gamma^2(\mathcal{O})$ of degree $2$ over $\mathbb{Q}(i)$ is defined by
$$
\Gamma^2(\mathcal{O})=M_4(\mathcal{O})\cap U_2.
$$

The group $\Gamma^2(\mathcal{O})$ acts on $\mathcal{H}_2$ by the fractional transformation 
$$
Z\longmapsto MZ= (AZ+B)(CZ+D)^{-1}, 
$$
where $M=\begin{pmatrix}
A & B \\
C & D
\end{pmatrix} \in \Gamma^2(\mathcal{O})$ and $Z\in \mathcal{H}_2$. Let $F$ be a complex valued function on $\mathcal{H}_2$. For a positive integer $k$  we define
$$
F\mid_k M(Z)=(\text{det}(CZ+D))^{-k}F(MZ), 
$$
where
$\text{det}$ is the determinant function and
$$
M=\begin{pmatrix}
A & B \\
C & D
\end{pmatrix} \in \Gamma^2(\mathcal{O}).
$$
For $k\in \mathbb{Z}$, let $\nu_k$ denote the abelian characters of $\Gamma^2(\mathcal{O})$ satisfying $\nu_k \cdot \nu_{k'}=\nu_{k+k'}$.
\begin{defn}
A holomorphic function $F:\mathcal{H}_2 \rightarrow \mathbb{C}$ is called a Hermitian modular form of weight $k$ and character $\nu_k$ on $\Gamma^2(\mathcal{O})$ if 
$$
F\mid_k M= \nu_k(M)F \quad \quad \text{for all} ~~  M \in \Gamma^2(\mathcal{O}).
$$
\end{defn}
Writing 
$Z=\begin{pmatrix}
\tau & z_1\\
z_2  & \tau'
\end{pmatrix}$, 
a Hermitian modular form $F$ has a Fourier expansion of the form
\begin{equation}\label{hmf}
F(Z)=\sum_{T \in {\Delta}_2} A_F(T)e(tr(TZ))=\sum_{\substack{n, m \in \mathbb{Z}, r\in \mathcal{O}^{\#} \\  N(r)\le mn}}A_F(n, r, m)q^n \zeta_1^r \zeta_2^{\overline{r}}(q')^{m},
\end{equation}
where 
$$
\Delta_2=
 \bigg\{ T=\begin{pmatrix}
n &   r \\
\overline{r} & m
\end{pmatrix}\ge 0 \mid  n, m \in \mathbb{Z},n \ge 0, m\ge 0, r \in \mathcal{O}^{\#}\bigg\},
$$
$tr(TZ)$ is the trace of the matrix $TZ$
and 
$q=e(\tau)$, $\zeta_1=e(z_1)$, $\zeta_2=e(z_2)$, $q'=e(\tau')$.

A Hermitian modular form $F$ is called a Hermitian cusp form if the sum in \eqref{hmf} runs over all positive-definite matrices $T\in \Delta_2$. We denote by $M_k(\Gamma^2(\mathcal{O}), \nu_k)$ the complex vector space of all Hermitian modular forms of weight $k$ and character $\nu_k$.
A Hermitian modular form $F\in M_k(\Gamma^2(\mathcal{O}), \nu_k)$ is called symmetric (respectively skew-symmetric) if 
$$
F(Z^{t})=F(Z) \quad \quad (\text{respectively}~ F(Z^{t})=-F(Z))
$$
for all $Z\in \mathcal{H}_2$.
We denote by $M_k(\Gamma^2(\mathcal{O}), \nu_k)^{sym}$ (respectively  $M_k(\Gamma^2(\mathcal{O}), \nu_k)^{skew}$) the subspace of 
$M_k(\Gamma^2(\mathcal{O}), \nu_k)$ consisting of
all symmetric (respectively skew-symmetric) Hermitian modular forms of weight $k$ and character $\nu_k$.
Writing 
$Z=\begin{pmatrix}
\tau & z_1 \\
z_2 & \tau'
\end{pmatrix}$, any $F\in M_k(\Gamma^2(\mathcal{O}), \nu_k)$ has a Fourier-Jacobi expansion
of the form:
\begin{equation}\label{decomposition}
F(Z)=F(\tau, z_1, z_2, \tau')=\sum_{m\ge 0}\phi_m(\tau, z_1, z_2)e(m\tau'),
\end{equation}
where $\phi_m \in HJ_{k, m}^{\delta}(\Gamma^J(\mathcal{O}))$ for some 
$\delta \in \{+, -\}$. We are interested in the case when $\nu_k=\text{det}^{k/2}$ ($k$ even), where  the character $\text{det}^{k/2}$ on $\Gamma^2(\mathcal{O})$ is defined by $M\mapsto \text{det}(M)^{k/2}$. Using a similar idea as in \cite[Theorem 7.1]{haver1}, we have the following result.

\begin{thm}\label{dparity}
Let $F\in M_k(\Gamma^2(\mathcal{O}), {\rm{det}}^{k/2})$. Suppose that the Fourier-Jacobi expansion 
of $F$ is given by
$$
F(\tau, z_1, z_2, \tau')=\sum_{m\ge 0}\phi_m(\tau, z_1, z_2)e(m\tau').
$$
Then $\phi_m$ is a Hermitian Jacobi form of weight $k$, index $m$
and parity $\delta$, where 
\[ \delta=\begin{cases} 
      + & \mbox{if}~~k\equiv 0 \pmod 4, \\
      - & \mbox{if}~~k\equiv 2 \pmod 4.
   \end{cases}
\]
\end{thm}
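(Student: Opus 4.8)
The plan is to read off the parity of each $\phi_m$ from the way $F$ transforms under the diagonal unitary elements of $\Gamma^2(\mathcal{O})$ that induce the substitution $(z_1,z_2)\mapsto(\epsilon z_1,\overline{\epsilon}z_2)$ for a unit $\epsilon\in\mathcal{O}^\times$. The parity $\delta$ of a form in $HJ_{k,m}^{\delta}(\Gamma^J(\mathcal{O}))$ is precisely the datum recorded by the $M=I$ case of \eqref{hj1}, namely $\phi(\tau,\epsilon z_1,\overline{\epsilon}z_2)=\sigma(\epsilon)^{-1}\epsilon^{k}\phi(\tau,z_1,z_2)$; since \eqref{decomposition} already tells us that each $\phi_m$ is a Hermitian Jacobi form of \emph{some} parity, it suffices to compute the scalar by which $\phi_m(\tau,\epsilon z_1,\overline{\epsilon}z_2)$ differs from $\phi_m(\tau,z_1,z_2)$ and then compare with the two admissible values of $\sigma(\epsilon)^{-1}\epsilon^{k}$.

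For a unit $\epsilon\in\mathcal{O}^\times$ I would set $V=\left(\begin{smallmatrix}\epsilon & 0\\ 0 & 1\end{smallmatrix}\right)$ and $M=\left(\begin{smallmatrix} V & 0\\ 0 & (\overline{V}^t)^{-1}\end{smallmatrix}\right)$. Since $\overline{\epsilon}=\epsilon^{-1}$, both blocks lie in $M_2(\mathcal{O})$, and a direct check gives $\overline{M}^t J_2 M=J_2$, so $M\in\Gamma^2(\mathcal{O})$. Its action on $Z=\left(\begin{smallmatrix}\tau & z_1\\ z_2 & \tau'\end{smallmatrix}\right)$ is $Z\mapsto VZ\overline{V}^t=\left(\begin{smallmatrix}\tau & \epsilon z_1\\ \overline{\epsilon}z_2 & \tau'\end{smallmatrix}\right)$, where I use $N(\epsilon)=\epsilon\overline{\epsilon}=1$ to see that the diagonal entries are unchanged. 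Here $C=0$ and $D=(\overline{V}^t)^{-1}$, so $\det(CZ+D)=\overline{\epsilon}^{-1}=\epsilon$ while $\det M=\epsilon\cdot\overline{\epsilon}^{-1}=\epsilon^{2}$, whence the character value is $\nu_k(M)=\det(M)^{k/2}=\epsilon^{k}$ (well defined since $\det M=\pm1$).

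Feeding these into $F\mid_k M=\nu_k(M)F$ gives $\epsilon^{-k}F(\tau,\epsilon z_1,\overline{\epsilon}z_2,\tau')=\epsilon^{k}F(\tau,z_1,z_2,\tau')$, that is $F(\tau,\epsilon z_1,\overline{\epsilon}z_2,\tau')=\epsilon^{2k}F(\tau,z_1,z_2,\tau')$. Substituting the Fourier--Jacobi expansion \eqref{decomposition} on both sides and comparing coefficients of $e(m\tau')$, which are linearly independent as functions of $\tau'$, yields $\phi_m(\tau,\epsilon z_1,\overline{\epsilon}z_2)=\epsilon^{2k}\phi_m(\tau,z_1,z_2)$ for every $m\ge 0$.

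It then remains to match this against the two possible parities. The unit $\epsilon=-1$ gives no information because $k$ is even; the decisive case is $\epsilon=i$, for which the identity reads $\phi_m(\tau,iz_1,-iz_2)=i^{2k}\phi_m=(-1)^{k}\phi_m$. On the other hand, the $M=I$ case of \eqref{hj1} forces this scalar to be $i^{k}$ if $\delta=+$ and $i^{k-2}=-i^{k}$ if $\delta=-$. When $k\equiv 0\pmod 4$ one has $i^{k}=1$, so the computed scalar $(-1)^{k}=1$ selects $\delta=+$; when $k\equiv 2\pmod 4$ one has $i^{k}=-1$, so $(-1)^{k}=1$ selects $\delta=-$. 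This is exactly the claimed dichotomy. I expect the only genuinely delicate point to be the bookkeeping: combining the automorphy factor $\epsilon^{-k}$ with the character value $\epsilon^{k}$ to obtain the net factor $\epsilon^{2k}$, and recognizing that it is the unit $i$ (rather than $-1$) that separates the two parities; verifying $M\in\Gamma^2(\mathcal{O})$ and computing its action are routine.
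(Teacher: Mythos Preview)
Your argument is correct. The paper does not actually supply a proof of this theorem; it simply states that the result follows ``using a similar idea as in \cite[Theorem~7.1]{haver1}'' and invokes the general fact recorded at \eqref{decomposition} that each $\phi_m$ lies in $HJ_{k,m}^{\delta}(\Gamma^J(\mathcal{O}))$ for some $\delta$. Your proof fills in exactly the missing computation: you pick the block-diagonal element $M=\mathrm{diag}(V,(\overline V^t)^{-1})\in\Gamma^2(\mathcal{O})$ with $V=\mathrm{diag}(\epsilon,1)$, read off $MZ=\bigl(\begin{smallmatrix}\tau & \epsilon z_1\\ \overline{\epsilon}z_2 & \tau'\end{smallmatrix}\bigr)$, $\det(CZ+D)=\epsilon$, $\det M=\epsilon^2$, and hence obtain $\phi_m(\tau,\epsilon z_1,\overline{\epsilon}z_2)=\epsilon^{2k}\phi_m(\tau,z_1,z_2)=\phi_m(\tau,z_1,z_2)$ since $k$ is even. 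Matching this against the $M=I$ case of \eqref{hj1}, which forces the scalar to be $\sigma(\epsilon)^{-1}\epsilon^k$, pins down $\delta$ via $\epsilon=i$. This is precisely the standard verification one would expect from the Haverkamp reference, so your approach coincides with what the paper intends; there is nothing to add.
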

We define
$$
M(\Gamma^2(\mathcal{O}), \mbox{det}^{})^{sym}=\bigoplus_{k\in 2\mathbb{Z}}M_k(\Gamma^2(\mathcal{O}), \text{det}^{k/2})^{sym}.
$$
Then $M(\Gamma^2(\mathcal{O}), \mbox{det}^{})^{sym}$ is a graded ring. The Hermitian Eisenstein series of degree $2$ and even weight $k\ge 6$ is defined by
$$
H_k(Z)= \sum_{M}(\text{det}M)^{k/2}\text{det}(CZ+D)^{-k},
$$
where $M=
\begin{pmatrix}
* & *\\
C & D
\end{pmatrix}
$
runs over a set of representatives of $\bigg\{\begin{pmatrix}
* & * \\
0 & *
\end{pmatrix}  \bigg\}\setminus\Gamma^2(\mathcal{O})$. 
The Hermitian Eisenstein series $H_4$ of degree $2$ and weight $4$ has been constructed by the Maass lift in \cite{krieg}.
It is well-known that for even $k\ge 4$,
$$
H_k \in M_k(\Gamma^2(\mathcal{O}),                       \text{det}^{k/2})^{sym}.
$$
Using the Hermitian Eisenstein series, we obtain the symmetric Hermitian cusp forms
$$
\chi_8=-\frac{61}{230400}(H_8-H_4^2),
$$
$$
F_{10}=- \frac{277}{2419200}(H_{10}-H_4H_6),
$$
and 
$$
F_{12}=-\frac{34910011}{2002662144000}H_{12}-\frac{34801}{1009152000}H_4^3+\frac{414251}{9082368000}H_4H_8+\frac{50521}{8010648576}H_6^2
$$
of weights $8$, $10$ and $12$ respectively.
For any ring $R\subseteq \mathbb{C}$, we define 
$$
M_k(\Gamma^2(\mathcal{O}), \mbox{det}^{k/2}, R):=\bigg\{F=\sum_{T\in \Delta_2} A_F( T)e(tr(TZ))\in M_k(\Gamma^2(\mathcal{O}), \mbox{det}^{k/2} )\mid A_F(T)\in R \bigg\}
$$
and  
$$
M_k(\Gamma^2(\mathcal{O}), \mbox{det}^{k/2}, R)^{sym}:=\bigg\{F=\sum_{T\in \Delta_2} A_F( T)e(tr(TZ))\in M_k(\Gamma^2(\mathcal{O}), \mbox{det}^{k/2})^{sym} \mid A_F(T)\in R \bigg\}.
$$
Thus we have
$$
M(\Gamma^2(\mathcal{O}), \mbox{det}, R)^{sym}=\bigoplus_{k \in 2\mathbb{Z}} M_k(\Gamma^2(\mathcal{O}), \mbox{det}^{k/2}, R)^{sym}.
$$
We state the following result \cite[Theorem 4.3, Theorem 5.1]{nagaoka1}.
\begin{thm}
The symmetric Hermitian modular forms  $H_4$, $H_6$, $\chi_8$, $F_{10}$, $F_{12}$ are algebraically independent. 
If $F\in M_k(\Gamma^2(\mathcal{O}), {\rm{det}}^{k/2})^{sym}$, then there exists a polynomial $P_F \in \mathbb{C}[x_1, x_2, x_3, x_4, x_5]$ such that 
$$
F=P_F(H_4, H_6, \chi_8, F_{10}, F_{12}).
$$
In other words,
$$
\bigoplus_{k\in 2\mathbb{Z}}M_k(\Gamma^2(\mathcal{O}), {\rm{det}}^{k/2})^{sym}=\mathbb{C}[H_4, H_6, \chi_8, F_{10}, F_{12}].
$$
Moreover, the Hermitian modular forms $H_4$, $H_6$, $\chi_8$, $F_{10}$, $F_{12}$  have integral Fourier coefficients.
Furthermore, for any prime $p\ge 5$, if $F\in M_k(\Gamma^2(\mathcal{O}), {\rm{det}}^{k/2}, \mathbb{Z}_{(p)})^{sym}$, then there exists a polynomial $P \in \mathbb{Z}_{(p)}[x_1, x_2, x_3, x_4, x_5]$ such that 
$$
F=P(H_4, H_6, \chi_8, F_{10}, F_{12}).
$$
In other words,
$$
\bigoplus_{k\in 2\mathbb{Z}}M_k(\Gamma^2(\mathcal{O}), {\rm{det}}^{k/2}, \mathbb{Z}_{(p)})^{sym}=\mathbb{Z}_{(p)}[H_4, H_6, \chi_8, F_{10}, F_{12}].
$$
\end{thm}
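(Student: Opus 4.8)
The plan is to prove the theorem in four stages: the description of the Hilbert--Poincar\'e series of the graded ring, the algebraic independence of the five forms (which upgrades the Hilbert series comparison to an isomorphism over $\mathbb{C}$), the integrality of the Fourier coefficients of the generators, and finally the $\mathbb{Z}_{(p)}$-integral structure. First I would establish a dimension formula for $M_k(\Gamma^2(\mathcal{O}), \det^{k/2})^{sym}$. Since $\mathcal{H}_2$ has complex dimension $4$, the graded ring of symmetric Hermitian modular forms has Krull dimension $5$, so it can contain at most five algebraically independent elements. I would compute the generating function $\sum_{k\in 2\mathbb{Z}} \dim_{\mathbb{C}} M_k(\Gamma^2(\mathcal{O}), \det^{k/2})^{sym}\, t^k$ (via Hirzebruch--Mumford proportionality together with the boundary contributions of the Satake compactification, or by restriction to the Siegel/paramodular sublocus) and check that it equals
$$
\frac{1}{(1-t^4)(1-t^6)(1-t^8)(1-t^{10})(1-t^{12})},
$$
which is exactly the Hilbert series of a weighted polynomial ring on generators of weights $4,6,8,10,12$.

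Next I would prove that $H_4, H_6, \chi_8, F_{10}, F_{12}$ are algebraically independent over $\mathbb{C}$. The cleanest route is a Jacobian criterion adapted to the weight grading: using the Euler relation for weighted-homogeneous forms, one augments the four coordinate derivatives on $\mathcal{H}_2$ with the grading derivation and shows the resulting $5\times 5$ Jacobian determinant is not identically zero; equivalently one exhibits enough low-order Fourier--Jacobi coefficients so that no nonzero weight-homogeneous polynomial relation can vanish. Once independence is established, $\mathbb{C}[H_4, H_6, \chi_8, F_{10}, F_{12}]$ is a polynomial subring whose Hilbert series equals that of the full ring by the previous step; since a graded subring with the same finite-dimensional graded pieces must coincide with the ambient ring, the inclusion is forced to be an equality, giving simultaneously the algebraic independence statement and the $\mathbb{C}$-structure.

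For integrality I would use the explicit constructions of the generators: $H_4$ is the Maass (Hermitian) lift of \cite{krieg}, and the $H_k$ for $k\ge 6$ are Hermitian Eisenstein series whose Fourier coefficients, after the normalization built into their definition, lie in $\mathbb{Z}$; the cusp forms $\chi_8, F_{10}, F_{12}$ are defined precisely as the rational linear combinations of products of Eisenstein series that make the coefficients integral. I would verify integrality by reading off the Fourier--Jacobi expansion \eqref{decomposition}, invoking the integrality of the associated Hermitian Jacobi--Eisenstein coefficients, and checking finitely many Fourier coefficients up to a Sturm-type bound for symmetric Hermitian modular forms (cf. \propref{sturm2}).

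Finally, for the $\mathbb{Z}_{(p)}$-structure, given $F\in M_k(\Gamma^2(\mathcal{O}), \det^{k/2}, \mathbb{Z}_{(p)})^{sym}$ I would argue by induction on the weight $k$. By the $\mathbb{C}$-structure, $F = P(H_4,\dots,F_{12})$ for a weight-homogeneous $P$ over $\mathbb{C}$, and the weight-$k$ monomials in the generators form a $\mathbb{C}$-basis of $M_k^{sym}$ by algebraic independence; the task is to show these complex coefficients actually lie in $\mathbb{Z}_{(p)}$. The hard part will be the $\mathbb{Z}_{(p)}$-unimodularity needed to solve for the coefficients of $P$, since denominators must be controlled uniformly for every prime $p\ge 5$. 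I expect to handle this by descent through the boundary: apply the Siegel operator (or pass to the first Fourier--Jacobi coefficient) to land in spaces of elliptic and Hermitian Jacobi forms whose $\mathbb{Z}_{(p)}$-module structure is already known (the Hermitian Jacobi case is exactly \corref{present} of Section 3), peel off the leading generator so that the difference is a $\mathbb{Z}_{(p)}$-integral cusp form of the same weight, and iterate until the weight falls below $4$, where the space is trivial. This reduces the global unimodularity question to the integral structure theorems for the one-variable building blocks supplied earlier in the paper.
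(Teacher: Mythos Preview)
The paper does not prove this theorem at all: it is quoted as \cite[Theorem 4.3, Theorem 5.1]{nagaoka1} and used as a black box in Sections~7--9. So there is no in-paper argument to compare your proposal against; any self-contained proof you write is strictly more than what the authors supply.

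As for the proposal itself, the four-stage plan is the right \emph{shape}, but the first stage is carrying almost all of the weight and is not nearly as routine as you suggest. There is no ready-made dimension formula for $M_k(\Gamma^2(\mathcal{O}),\det^{k/2})^{sym}$ coming from Hirzebruch--Mumford proportionality that one can just read off; in the actual source (Kikuta--Nagaoka, relying on Dern--Krieg for the $\mathbb{C}$-structure) the generators are produced constructively via Maa\ss\ lifts and Borcherds products and the ring is shown to be polynomial by explicit computation, not by matching an a~priori Hilbert series. Treating the Hilbert-series identity as a step one can ``compute'' without further comment is the genuine gap here. Your forward references to \corref{present} and \propref{sturm2} are not circular (neither depends on the present theorem), but \corref{present} only gives a $\mathbb{Z}_{(p)}$-module basis for Jacobi forms on $\Gamma^1(\mathcal{O})$ of a \emph{single fixed index} $m$, which controls one Fourier--Jacobi coefficient at a time; it does not hand you the unimodular change-of-basis you need in $M_k^{sym}$ to conclude that the coefficients of $P$ lie in $\mathbb{Z}_{(p)}$. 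The induction-by-Siegel-operator idea is the right one, but making it work requires the explicit leading Fourier--Jacobi coefficients of the five generators and a careful divisibility check---exactly the content of the Kikuta--Nagaoka paper you would be reproducing.
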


\subsection{Heat operator}
The heat operator on 
any holomorphic function $F : \mathcal{H}_2 \longrightarrow \mathbb{C}$,
is defined by
$$
\mathbb{D}=-\frac{1}{\pi^2}\left(\frac{\partial^2}{\partial \tau \partial \tau'}-\frac{\partial^2}{\partial z_1 \partial z_2} \right).
$$
If $F\in M_k(\Gamma^2(\mathcal{O}), \mbox{det}^{k/2})$ has Fourier expansion of the form \eqref{hmf}, then the Fourier expansion of $\mathbb{D}(F)$ is given by
$$
\mathbb{D}(F)=\sum_{T\in \Delta_2}4\mbox{det}(T)A_F(T)e(tr(TZ))=\sum_{T=\begin{pmatrix}
n & r \\
\overline{r} & m
\end{pmatrix} \in \Delta_2}4(nm-N(r))A_F(n, r, m)q^n \zeta_1^r\zeta_2^{\overline{r}}(q')^m.
$$

\section{Hermitian modular forms modulo $p$}
In this section, $p\ge 5$ is a prime. Let $F\in M_k(\Gamma^2(\mathcal{O}), \mbox{det}^{k/2},  \mathbb{Z}_{(p)})^{sym}$ having Fourier expansion 
$$
F=\sum_{T \in \Delta_2} A_F(T)e(tr(TZ)).
$$
We define 
$$
\overline{F}=\sum_{T \in \Delta_2} \overline{A_F(T)}e(tr(TZ)),
$$
where $\overline{A_F(T)}$ is the reduction of $A_F(T)$ modulo $p$. Let
$$
M_k(\Gamma^2(\mathcal{O}), \mbox{det}^{k/2}, \mathbb{F}_p)=\{\overline{F}\mid F \in M_k(\Gamma^2(\mathcal{O}),\mbox{det}^{k/2}, \mathbb{Z}_{(p)})\},
$$

$$
M_k(\Gamma^2(\mathcal{O}), \mbox{det}^{k/2}, \mathbb{F}_p)^{sym}=\{\overline{F}\mid F \in M_k(\Gamma^2(\mathcal{O}),\mbox{det}^{k/2}, \mathbb{Z}_{(p)})^{sym} \}
$$
and
$$
M(\Gamma^2(\mathcal{O}), \mbox{det}^{k/2}, \mathbb{F}_p)^{sym}=\sum_{k\in 2\mathbb{Z}}{M}_k(\Gamma^2(\mathcal{O}), \mbox{det}^{k/2}, \mathbb{F}_p)^{sym}.
$$
For $F\in M_k(\Gamma^2(\mathcal{O}), \mbox{det}^{k/2}, \mathbb{Z}_{(p)})$ the filtration of $F$ modulo $p$ is defined by 
$$
\mho(F)=\mbox{inf}\big\{k \mid \overline{F}\in M_k(\Gamma^2(\mathcal{O}), \mbox{det}^{k/2}, \mathbb{F}_p)^{sym}\big\}.
$$
The main aim of this section is to prove \propref{hermitiancycle}.
For this we first prove a result similar to \thmref{hermitiancase} for Hermitian modular forms. A more general result for a symmetric Hermitian modular form has been proved by Kikuta \cite[Theorem 1.4]{kikuta}. But our method of proof is different and we prove it for any Hermitian modular form.
\begin{thm}\label{weightdifference}
Let $(F_k)_k$ be a finite family of Hermitian modular forms with $F_k \in M_k(\Gamma^2(\mathcal{O}), {\rm{det}}^{k/2}, \mathbb{Z}_{(p)})$. If $\sum_{k}F_k\equiv 0 \pmod p$, then for any $a\in \mathbb{Z}/(p-1)\mathbb{Z}$ we have
$$
\sum_{k\in a+(p-1)\mathbb{Z}}F_k \equiv 0 \pmod p.
$$ 
\end{thm}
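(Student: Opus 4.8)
The plan is to reduce the statement to its Hermitian Jacobi analogue, \thmref{hermitiancase}, by passing to Fourier--Jacobi expansions. First I would write each $F_k$ in its Fourier--Jacobi expansion as in \eqref{decomposition},
$$
F_k(\tau, z_1, z_2, \tau') = \sum_{m \ge 0} \phi_{k, m}(\tau, z_1, z_2)\, e(m\tau'),
$$
where, by \thmref{dparity}, each $\phi_{k, m}$ lies in $HJ^{\delta_k}_{k, m}(\Gamma^J(\mathcal{O}))$ with parity $\delta_k$ determined by $k \bmod 4$. Since $F_k$ has all its Fourier coefficients in $\mathbb{Z}_{(p)}$ and the relation $c(\phi_{k, m}; n, r) = A_{F_k}(n, r, m)$ holds, each $\phi_{k, m}$ in fact belongs to $HJ^{\delta_k}_{k, m}(\Gamma^J(\mathcal{O}), \mathbb{Z}_{(p)})$. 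Note that the parities $\delta_k$ are allowed to depend on $k$, which is exactly the generality in which \thmref{hermitiancase} is stated.

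The key observation is that the variable $\tau'$ separates the congruence according to index: two Fourier expansions agree modulo $p$ if and only if, for each $m$, the coefficients of the monomial $(q')^m$ agree modulo $p$. Hence the hypothesis $\sum_k F_k \equiv 0 \pmod p$ is equivalent to
$$
\sum_k \phi_{k, m} \equiv 0 \pmod p \qquad \text{for every } m \ge 0.
$$
For each fixed $m \ge 1$ the collection $(\phi_{k, m})_k$ is then a finite family of Hermitian Jacobi forms of the common index $m$ satisfying the hypothesis of the second assertion of \thmref{hermitiancase}; applying that theorem yields, for every $a \in \mathbb{Z}/(p-1)\mathbb{Z}$,
$$
\sum_{k \in a + (p-1)\mathbb{Z}} \phi_{k, m} \equiv 0 \pmod p.
$$
The index-zero component $(\phi_{k, 0})_k$ must be handled separately, since $N(r) \le 0$ forces $r = 0$, so each $\phi_{k, 0}$ is an elliptic modular form of weight $k$ on $SL_2(\mathbb{Z})$ with $p$-integral coefficients; the same conclusion for $m = 0$ then follows from the classical theory of Serre and Swinnerton-Dyer on modular forms modulo $p$.

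Finally I would reassemble the Fourier--Jacobi series. Since the congruence $\sum_{k \in a+(p-1)\mathbb{Z}} \phi_{k, m} \equiv 0 \pmod p$ holds for every $m \ge 0$, and these exhaust all Fourier--Jacobi coefficients, collecting them back against $e(m\tau')$ gives
$$
\sum_{k \in a+(p-1)\mathbb{Z}} F_k = \sum_{m \ge 0} \Bigl( \sum_{k \in a+(p-1)\mathbb{Z}} \phi_{k, m} \Bigr) e(m\tau') \equiv 0 \pmod p,
$$
which is the assertion. The genuinely substantive arithmetic is entirely contained in \thmref{hermitiancase}; the only delicate points here are bookkeeping, namely confirming that reduction modulo $p$ commutes with extraction of Fourier--Jacobi coefficients and separately disposing of the $m = 0$ term, which lies outside the scope of \thmref{hermitiancase} but is immediate from the classical case. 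I expect the $m=0$ term to be the main place where one must be careful not to apply \thmref{hermitiancase} outside its intended range of indices.
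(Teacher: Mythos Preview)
Your proposal is correct and follows essentially the same route as the paper: pass to Fourier--Jacobi coefficients, apply \thmref{hermitiancase} index by index, and reassemble. The only difference is cosmetic: the paper applies \thmref{hermitiancase} uniformly for all $m\ge 0$ without singling out $m=0$, whereas you treat the index-zero coefficients separately via the classical Serre--Swinnerton-Dyer theory; since a Hermitian Jacobi form of index $0$ is just an elliptic modular form, both treatments amount to the same thing, and your extra care there is harmless but not strictly necessary.
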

\begin{proof}
Let $[F_k]_m$ denote the $m^{th}$ Hermitian Jacobi form in the Fourier-Jacobi expansion of $F_k$. Then by the Fourier-Jacobi expansion of $F_k$, we see that
$$
\sum_k F_k \equiv 0 \pmod p
$$ 
if and only if  
$$\sum_{k}[F_k]_m \equiv 0 \pmod p
$$ 
for all $m\ge 0$. By \thmref{hermitiancase} for each $a\in \mathbb{Z}/(p-1)\mathbb{Z}$ we have
$$
\sum_{k\in a+(p-1)\mathbb{Z}}[F_k]_m \equiv 0 \pmod p
$$
for all $m\ge 0$. This implies that
$$
\sum_{k\in a+(p-1)\mathbb{Z}}F_k \equiv 0 \pmod p.
$$
\end{proof}

Let $T=\sum_{}c_{(a, b, c, d, e)}x_1^ax_2^bx_3^cx_4^dx_5^e \in \mathbb{Z}_{(p)}[x_1, x_2, x_3, x_4, x_5]$ be a polynomial in the variables $x_1, x_2, x_3, x_4, x_5$. The reduction of $T$ modulo a prime $p$ is defined by
$$\overline{T}=\sum_{}\overline{c}_{(a, b, c, d, e)}x_1^ax_2^bx_3^cx_4^dx_5^e
\in \mathbb{F}_p[x_1, x_2, x_3, x_4, x_5],$$
where $\overline{c}_{(a, b, c, d, e)}$ is the reduction of $c_{(a, b, c, d, e)}$ modulo the prime $p$. With this definition, we recall the following result \cite[Proposition 5.1, Theorem 5.2]{nagaoka1}.
\begin{thm}\label{nagaokathm1}
Let $p\ge 5$  be a prime and let $F\in M_k(\Gamma^2(\mathcal{O}), {\rm{det}}^{k/2}, \mathbb{Z}_{(p)})^{sym}$.
Then there exists a Hermitian modular form $F_{p-1}\in M_{p-1}(\Gamma^2(\mathcal{O}), {\rm{det}}^{(p-1)/2}, \mathbb{Z}_{(p)})^{sym}$ such that 
$$
F_{p-1}\equiv 1 \pmod p.
$$
Furthermore, if $B \in \mathbb{Z}_{(p)}[x_1, x_2, x_3, x_4, x_5]$  is the polynomial defined by $F_{p-1}=B(H_4, H_6, \chi_8, F_{10}, F_{12})$, then the polynomial $\overline{B}-1$ is irreducible in 
$\mathbb{F}_p[x_1, x_2, x_3, x_4, x_5]$ and 
\begin{equation}
M(\Gamma^2(\mathcal{O}), {\rm{det}}^{k/2}, \mathbb{F}_p)^{sym}\cong \mathbb{F}_p[x_1, x_2, x_3, x_4, x_5]/(\overline{B}-1).
\end{equation}
\end{thm}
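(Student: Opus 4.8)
The plan is to reduce the statement to a computation of the kernel of the reduction map on the polynomial ring $R:=\mathbb{F}_p[x_1,x_2,x_3,x_4,x_5]$. First I would construct $F_{p-1}$: the natural candidate is the Hermitian Eisenstein series $H_{p-1}$ of weight $p-1$, normalized to have constant term $1$. The congruence $H_{p-1}\equiv 1\pmod p$ amounts to showing that every non-constant Fourier coefficient is divisible by $p$, and this is where genuine arithmetic enters; I would deduce it from the explicit formula for the Fourier coefficients of $H_{p-1}$ (in terms of divisor sums and special $L$-values / generalized Bernoulli numbers attached to $\mathbb{Q}(i)$) together with a von Staudt--Clausen/Kummer-type divisibility argument. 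Writing $F_{p-1}=B(H_4,H_6,\chi_8,F_{10},F_{12})$ with $B\in\mathbb{Z}_{(p)}[x_1,\dots,x_5]$ (possible by the structure theorem of Kikuta--Nagaoka recalled above), the polynomial $B$ is isobaric of weight $p-1$ for the weighting $(4,6,8,10,12)$, and $F_{p-1}\equiv 1$ gives $\Phi(\overline{B}-1)=0$, where $\Phi:R\twoheadrightarrow S:=M(\Gamma^2(\mathcal{O}),\det^{k/2},\mathbb{F}_p)^{sym}$ is the reduction map $x_i\mapsto\overline{g_i}$ (with $g_1=H_4,\dots,g_5=F_{12}$); the surjectivity of $\Phi$ is exactly the integral structure theorem.

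The algebraic skeleton is then as follows. The ring $S$ is an integral domain, because the Fourier ($q$-)expansion embeds it into a formal power series ring over $\mathbb{F}_p$, which is a domain; hence $\ker\Phi$ is a prime ideal containing $\overline{B}-1$. It suffices to prove two things: (i) $\overline{B}-1$ is irreducible in $R$, so that $(\overline{B}-1)$ is a height-one prime, since $R$ is a unique factorization domain; and (ii) $\dim S=4$, equivalently $\mathrm{ht}(\ker\Phi)=1$. Granting (i) and (ii), the prime $\ker\Phi$ contains a minimal prime over $(\overline{B}-1)$, which has height one, and since $\mathrm{ht}(\ker\Phi)=1$ the two coincide; irreducibility then identifies this minimal prime with $(\overline{B}-1)$ itself, giving $S\cong R/(\overline{B}-1)$. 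Equivalently, one factors $\Phi$ through the four-dimensional ring $R/(\overline{B}-1)$ and uses that a surjection of finitely generated $\mathbb{F}_p$-domains of equal Krull dimension is an isomorphism.

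For (ii) the bound $\dim S\le 4$ is automatic, since $\overline{B}-1$ is a nonzero non-unit and $R/(\overline{B}-1)\twoheadrightarrow S$. For the reverse inequality I would exhibit four of the reductions, say $\overline{H_4},\overline{H_6},\overline{\chi_8},\overline{F_{10}}$, as algebraically independent over $\mathbb{F}_p$. This can be established either by a non-vanishing Jacobian computed modulo $p$ from the heat operator $\mathbb{D}$ and the weight grading, or by a leading-Fourier-coefficient (term-order) argument, in both cases descending the characteristic-zero algebraic independence already recorded above to characteristic $p$.

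The main obstacle will be the two inputs special to Hermitian forms rather than to the formal algebra: the Eisenstein congruence $H_{p-1}\equiv 1\pmod p$, and the irreducibility of $\overline{B}-1$. For the latter, the useful structural fact is that $\overline{B}-1$ has weighted-homogeneous parts only in degrees $0$ (the constant $-1$) and $p-1$ (the top form $\overline{B}$). A hypothetical nontrivial factorization $\overline{B}-1=GH$ forces both factors to have nonzero constant terms, and comparing the lowest positive weighted-degree parts reduces irreducibility to a primitivity statement for $\overline{B}$, namely that $\overline{B}$ is not a constant multiple of a proper power of a weighted-homogeneous polynomial; in the language of the $\mathbb{G}_m$-action with weights $(4,6,8,10,12)$ this is the transitivity of the $\mu_{p-1}$-action on the components of $\{\overline{B}=1\}$. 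Ruling out the degenerate case requires the explicit shape of $F_{p-1}$, so the irreducibility is precisely where the explicit Eisenstein computation feeds back into the algebra, and I expect this to be the most delicate point of the argument.
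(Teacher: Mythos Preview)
The paper does not prove this theorem at all: it is quoted verbatim as \cite[Proposition 5.1, Theorem 5.2]{nagaoka1} of Kikuta--Nagaoka, with no argument given. So there is no in-paper proof to compare your proposal against.

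That said, your sketch is a faithful outline of the Swinnerton-Dyer strategy as transplanted to this setting, and the algebraic skeleton (integral-domain target, height-one prime kernel, dimension count) is sound. Two caveats worth flagging if you intend to actually carry this out. First, the identification $F_{p-1}=H_{p-1}$ is not automatic: for $p=5$ one has $p-1=4$, where $H_4$ is built by Maass lift rather than by the Eisenstein sum, and in any case the $p$-integrality and the congruence $H_{p-1}\equiv 1\pmod p$ require the explicit coefficient formulas of Krieg together with nontrivial Bernoulli-number divisibilities; Kikuta--Nagaoka handle this case-by-case rather than by a single Eisenstein argument. Second, your irreducibility paragraph is the right shape but is not yet a proof: from $\overline{B}-1=GH$ with both constant terms nonzero, comparing lowest positive weighted degrees only shows that the bottom nonconstant parts of $G$ and $H$ multiply into the bottom nonconstant part of $\overline{B}-1$ if those degrees happen to add up; it does not by itself force $\overline{B}$ to be a proper power. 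In the cited paper the irreducibility is obtained via the Siegel $\Phi$-operator and the known elliptic/Siegel irreducibility, not by a purely weighted-degree argument, so you should expect to need an extra structural input at that step.
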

Using the above theorem we obtain the following important corollary. The proof of the corollary is similar to the proof of an analogous result in the elliptic modular form case \cite[Theorem 7.5 (i)]{lang}. Therefore we omit the proof of the corollary.
\begin{cor}\label{divide}
Let $F \in M_k(\Gamma^2(\mathcal{O}), {\rm{det}}^{k/2}, \mathbb{Z}_{(p)})^{sym}$ be such that
$F\not\equiv 0 \pmod{p}$. Suppose that $P_F\in \mathbb{Z}_{(p)}[x_1, x_2, x_3, x_4, x_5]$ is such that $F=P_F(H_4, H_6, \chi_8, F_{10}, F_{12})$. Then $\mho(F)< k$ if and only if $\overline{B}$ divides $\overline{P}_F$, where $B$ is as in \thmref{nagaokathm1}.
\end{cor}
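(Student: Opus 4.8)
The plan is to transport the problem to the weighted polynomial ring $S:=\mathbb{F}_p[x_1,x_2,x_3,x_4,x_5]$, where $x_i$ is given the weight of the corresponding generator (so $x_1,\dots,x_5$ carry weights $4,6,8,10,12$), and to exploit the two facts supplied by \thmref{nagaokathm1}: that the evaluation map sending a weighted-homogeneous polynomial to the corresponding symmetric Hermitian modular form is a graded isomorphism over $\mathbb{Z}_{(p)}$, and that after reduction modulo $p$ its kernel is exactly the ideal generated by $\overline{B}-1$. Writing $\beta:=\overline{B}$ (weighted-homogeneous of degree $p-1$) and $P:=\overline{P}_F$ (weighted-homogeneous of degree $k$, and nonzero since $F\not\equiv 0\pmod p$), the whole statement becomes the purely algebraic assertion that $\mho(F)<k$ if and only if $\beta\mid P$ in $S$.

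First I would record the translation of the filtration. Because the structure isomorphism is graded, a symmetric form of pure weight $k''$ corresponds to a weighted-homogeneous polynomial of degree $k''$; hence $\overline{F}$ lies in $M_{k''}(\Gamma^2(\mathcal{O}),{\rm det}^{k''/2},\mathbb{F}_p)^{sym}$ precisely when there is a weighted-homogeneous $Q\in S$ of degree $k''$ with $Q\equiv P\pmod{(\beta-1)}$. Thus $\mho(F)$ equals the least degree of a homogeneous representative of the class of $P$ modulo $(\beta-1)$, and $\mho(F)<k$ means that such a representative of degree strictly below $k$ exists.

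For the implication $\beta\mid P\Rightarrow\mho(F)<k$ I would write $P=\beta P'$ with $P'$ homogeneous of degree $k-(p-1)$; then $P=(\beta-1)P'+P'$ gives $P\equiv P'\pmod{(\beta-1)}$, and lifting $P'$ coefficientwise to $\mathbb{Z}_{(p)}[x_1,\dots,x_5]$ produces a symmetric form of weight $k-(p-1)$ whose reduction equals $\overline{F}$, so $\mho(F)\le k-(p-1)<k$. The converse is the substantive half, and here the main obstacle is controlling cancellation among inhomogeneous pieces. The plan is to compare top-degree homogeneous parts: given a homogeneous $Q$ of degree $k'<k$ with $P-Q=(\beta-1)C$ for some $C\in S$, I would decompose $C=\sum_i c_i$ into homogeneous components and read off the degree-$j$ part of $(\beta-1)C=\beta C-C$ as $\beta c_{j-(p-1)}-c_j$. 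Since $S$ is an integral domain and $\beta$ is homogeneous of positive degree, the highest degree occurring on the right is $(p-1)+\deg C$, carried by $\beta c_{\deg C}\ne 0$; matching it against the left side, whose only nonzero homogeneous parts sit in degrees $k$ and $k'<k$, forces $(p-1)+\deg C=k$ and therefore $P=\beta c_{\deg C}$. This exhibits $\beta\mid P$ and completes the argument.

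Two points I would be careful about. The relation $k''\equiv k\pmod{p-1}$ among the admissible weights is not needed for the divisibility argument itself, but it is exactly what \thmref{weightdifference} guarantees and what keeps the homogeneous bookkeeping consistent, while the hypothesis $F\not\equiv 0$ rules out the degenerate case $P=0$. Finally, although \thmref{nagaokathm1} phrases the kernel in terms of the irreducibility of $\overline{B}-1$, the present argument only uses that the kernel equals $(\overline{B}-1)$ together with the fact that $S$ is a domain; irreducibility is invoked merely to certify that this ideal is the full kernel.
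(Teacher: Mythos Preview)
Your argument is correct and is precisely the standard graded-ring computation that the paper invokes by citing the analogous elliptic case \cite[Theorem 7.5(i)]{lang}; the paper omits the proof entirely and simply refers to that source. Your top-degree comparison, noting that the highest homogeneous component of $(\beta-1)C$ sits in degree $(p-1)+\deg C$ and must match $P$ in degree $k$, is exactly how Lang's proof runs, so there is no substantive difference in approach.
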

Using the above corollary we obtain the following result which will be used in the proof of 
\propref{hermitiancycle}.
\begin{lem}\label{existencefiltration}
Let $p\ge 5$ be a prime. Suppose that 
$G\in M_{k}(\Gamma^2(\mathcal{O}), {\rm{det}}^{k/2}, \mathbb{Z}_{(p)})^{sym}$ is such that
$G\not\equiv 0 \pmod{p}$ and $\mho(G)=k$. Then there exist a positive integer $k'$ and a Hermitian modular form
$R\in M_{k'}(\Gamma^2(\mathcal{O}), {\rm{det}}^{k'/2}, \mathbb{Z}_{(p)})^{sym}$ with $R=P_R(H_4, H _6, \chi_8, F_{10}, F_{12})$ and $P_R\in \mathbb{Z}_{(p)}[x_1, x_2, x_3, x_4, x_5]$ such that 
$p \nmid k'(k'-1)$, $\mho(R)=k'$ and $\overline{B}$ does not divide the product $\overline{P}_R \overline{P}_G$, where $B$ is as in \thmref{nagaokathm1} and that $P_G\in \mathbb{Z}_{(p)}[x_1, x_2, x_3, x_4, x_5]$ is such that $G=P_G(H_4, H _6, \chi_8, F_{10}, F_{12})$. \end{lem}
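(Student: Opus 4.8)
The plan is to pass to the polynomial model provided by \thmref{nagaokathm1} and \corref{divide}, and then to write $R$ down explicitly as a power of the cusp form $\chi_8$. Since $G\not\equiv 0\pmod p$ and $\mho(G)=k$, \corref{divide} tells us that $\overline B$ does \emph{not} divide $\overline P_G$ in $\mathbb{F}_p[x_1,\dots,x_5]$; this is the only piece of information about $G$ that I will use. The task then reduces to exhibiting a monomial form $R$ whose associated polynomial $\overline P_R$ is coprime to $\overline B$ and whose weight $k'$ satisfies $k'\not\equiv 0,1\pmod p$.

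I would set $R=\chi_8^{\,a}$ for a positive integer $a$ to be fixed later, so that $P_R=x_3^{\,a}$ and $R\in M_{8a}(\Gamma^2(\mathcal{O}),\det^{4a},\mathbb{Z}_{(p)})^{sym}$ (it is symmetric and has integral coefficients because $\chi_8$ does). Everything then hinges on the claim $x_3\nmid\overline B$, which I expect to be the main obstacle. To prove it I would read off the index-zero Fourier--Jacobi coefficient in \eqref{decomposition}, i.e.\ apply the Siegel operator: it carries each Hermitian Eisenstein series $H_\ell$ to the elliptic Eisenstein series $E_\ell$ (up to a nonzero constant) and annihilates the cusp forms $\chi_8,F_{10},F_{12}$. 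Applying it to $F_{p-1}=B(H_4,H_6,\chi_8,F_{10},F_{12})\equiv 1\pmod p$ produces an elliptic modular form $B(E_4,E_6,0,0,0)\equiv 1\pmod p$. Since this reduction is the nonzero constant $1$, the polynomial $\overline B(x_1,x_2,0,0,0)$ is not identically zero, and therefore $x_3\nmid\overline B$; the identical argument gives $x_4,x_5\nmid\overline B$ as well. Note that a power of $H_4$ or $H_6$ would not work, since $\overline B$ can be divisible by $x_1$ or $x_2$ for particular $p$; this is exactly why a cusp generator is needed.

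Granting $x_3\nmid\overline B$, the remaining verifications are formal consequences of unique factorisation in $\mathbb{F}_p[x_1,\dots,x_5]$. Because $B$ is weighted homogeneous of positive degree $p-1$, the reduction $\overline B$ has no constant term and is not a unit, so $\overline B\nmid x_3^{\,a}$; hence $\mho(R)=k'$ by \corref{divide}. As $x_3$ is prime and $x_3\nmid\overline B$, we have $\gcd(\overline B,x_3^{\,a})=1$, so $\overline B\mid \overline P_R\,\overline P_G=x_3^{\,a}\,\overline P_G$ would force $\overline B\mid\overline P_G$, contradicting $\mho(G)=k$; thus $\overline B\nmid\overline P_R\,\overline P_G$. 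Finally $R\not\equiv 0\pmod p$: by \thmref{nagaokathm1} the quotient $\mathbb{F}_p[x_1,\dots,x_5]/(\overline B-1)$ is an integral domain, and $x_3\notin(\overline B-1)$ because $\overline B-1$ is irreducible with nonzero constant term and hence is not an associate of $x_3$, so $\overline{\chi_8}\neq 0$ and $\overline R=\overline{\chi_8}^{\,a}\neq 0$.

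To finish I would choose the weight. Since $p\ge 5$ we have $\gcd(8,p)=1$, so $8a\bmod p$ runs over all residues as $a$ varies; picking $a\ge 1$ with $8a\equiv 2\pmod p$ gives $k'=8a$ with $k'(k'-1)\equiv 2\cdot 1\not\equiv 0\pmod p$, i.e.\ $p\nmid k'(k'-1)$. This produces $R$ and $k'$ with all the asserted properties, which are precisely the ingredients needed for \propref{hermitiancycle}.
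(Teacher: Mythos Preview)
Your argument is correct and follows a genuinely different path from the paper's. The paper proceeds by cases on $\gcd(\overline{P}_G,\overline{B})$: when this gcd is a nontrivial proper factor $\overline{P}_R$ of $\overline{B}$, it lifts $\overline{P}_R$ to a form $R$ of weight $k'<p-1$ (so $p\nmid k'(k'-1)$ automatically); when the gcd is $1$, it takes $R=H_4$ for $p>5$ and $R=\chi_8$ for $p=5$, checking $\mho(R)=k'$ by hand in each subcase.

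Your construction is uniform: one family $R=\chi_8^{\,a}$ handles every $p\ge 5$, at the price of letting $k'=8a$ depend on $p$. The new structural input is the observation $x_3\nmid\overline{B}$, which you extract from the constant Fourier coefficient (equivalently the Siegel $\Phi$-operator); this fact does not appear in the paper. Your remark that powers of $H_4$ can fail because $\overline{B}$ may be divisible by $x_1$ (indeed this is exactly what happens at $p=5$) pinpoints why the paper needs a separate $p=5$ argument, so your treatment is arguably cleaner. Since \propref{hermitiancycle} only requires the \emph{existence} of some $R$ with the listed properties and is insensitive to the size of $k'$, either approach feeds equally well into the intended application. One small comment: your invocation of $\Phi(H_\ell)=E_\ell$ up to a nonzero constant is slightly more than you need; the bare fact $\overline{B}(1,1,0,0,0)=1$ (the $T=0$ Fourier coefficient of $F_{p-1}$) already gives $x_3\nmid\overline{B}$ and avoids any worry about the Maass-lift definition of $H_4$.
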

\begin{proof}
Firstly consider the case when ${\rm{gcd}}(\overline{P}_G, \overline{B})=\overline{P}_{R}\not =1$. Let 
$$P_R(x_1, x_2, x_3, x_4, x_5)\in \mathbb{Z}_{(p)}[x_1, x_2, x_3, x_4, x_5]$$ 
be such that the reduction
of the polynomial $P_R(x_1, x_2, x_3, x_4, x_5)$ modulo $p$ is $\overline{P}_{R}$. Then it can be checked that $P_R(x_1, x_2, x_3, x_4, x_5)$ is a graded polynomial, i. e., 
$R:=P_R(H_4, H _6, \chi_8, F_{10}, F_{12})\in M_{k'}(\Gamma^2(\mathcal{O}), {\rm{det}}^{k'/2}, \mathbb{Z}_{(p)})^{sym}$ for some integer $k'>0$. Since $\mho(G)=k$, 
$\overline{P}_{R}\not = \overline{B}$ by \corref{divide}. 
Since $\overline{P}_{R}$ is a non-trivial factor of
$\overline{B}$, $k'<p-1$ and $\mho(R)=k'$ by \thmref{weightdifference}. Therefore $p \nmid k'(k'-1)$. Also since 
$\overline{P}_{R}\not = \overline{B}$, we observe that $\overline{B}$ does not divide 
$\overline{P}_R \overline{P}_G$. Next consider the case when 
${\rm{gcd}}(\overline{P}_G, \overline{B})=1$. Let $p>5$. From the Fourier expansion of $H_4$ it is clear that $H_4\not\equiv 0 \pmod{p}$. In fact, this is true for any prime $p$. Also since $p>5$, by \thmref{weightdifference} we have $\mho(H_4)=4$.
Thus if we consider $R=H_4$, then by \corref{divide}, $\overline{B}$ does not divide $\overline{P}_R$. Therefore 
$\overline{B}$ does not divide $\overline{P}_R \overline{P}_G$. Now suppose that $p=5$. It is clear from the Fourier expansion of $\chi_8$ that $\chi_8 \not\equiv 0 \pmod{5}$. Since $\chi_8$ is a cusp form, the possible values of $\mho({\chi_8})$ are $4$ and $8$. 
We need to prove that
$\mho({\chi_8})=8$. If $\mho({\chi_8})=4$, then 
$$
\chi_8 \equiv \alpha H_4 \pmod{5}
$$
for some $\alpha \in \mathbb{Z}_{(5)}$. The above congruence relation is not possible since the Fourier coefficient corresponding to the zero matrix of $H_4$ is $1$ where as that of $\chi_8$ is $0$. Therefore $\mho({\chi_8})=8$. Let us take $R=\chi_8$. Then from the above discussion and \corref{divide}, we deduce that 
$\overline{B}$ does not divide $\overline{P}_R$. Since ${\rm{gcd}}(\overline{P}_G, \overline{B})=1$, 
$\overline{B}$ does not divide $\overline{P}_R \overline{P}_G$.
\end{proof}
We next state the following result \cite[Theorem 3]{nagaoka2}.
\begin{thm}\label{lessthan}
Let $p\ge 5$ be a prime.
If $F\in M_k(\Gamma^2(\mathcal{O}), {\rm{det}}^{k/2}, \mathbb{Z}_{(p)})^{sym}$, then there is a cusp form $G\in M_{k+p+1}(\Gamma^2(\mathcal{O}), {\rm{det}}^{(k+p+1)/2}, \mathbb{Z}_{(p)})^{sym}$ such that 
$$
\mathbb{D}(F)\equiv G \pmod p.
$$
\end{thm}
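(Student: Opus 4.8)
The plan is to reduce the statement about the degree-$2$ operator $\mathbb{D}$ to the Jacobi-form results of Section 3 by passing to the Fourier--Jacobi expansion \eqref{decomposition}. Writing $F(\tau,z_1,z_2,\tau')=\sum_{m\ge 0}\phi_m(\tau,z_1,z_2)e(m\tau')$ and differentiating term by term, one has $\partial_{\tau}\partial_{\tau'}F=\sum_m 2\pi i m\,\partial_\tau\phi_m\,e(m\tau')$ and $\partial_{z_1}\partial_{z_2}F=\sum_m \partial_{z_1}\partial_{z_2}\phi_m\,e(m\tau')$, so that
$$
\mathbb{D}(F)=\sum_{m\ge 0}L_m(\phi_m)\,e(m\tau'),
$$
where $L_m$ is precisely the Jacobi heat operator of Section 2.2. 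By \thmref{dparity} each $\phi_m$ lies in $HJ^{\delta}_{k,m}(\Gamma^J(\mathcal{O}),\mathbb{Z}_{(p)})$ with $\delta$ determined by $k\bmod 4$. Thus the whole problem is transported to the arithmetic of the $L_m(\phi_m)$, which I would control through \lemref{heatop} and the computation already performed in the proof of \thmref{hjf}.

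Next I would apply, for each $m$, that mod $p$ analysis. Writing $\hat\phi_m=L_m(\phi_m)-\tfrac{(k-1)m}{3}E_2\phi_m\in HJ^{-\delta}_{k+2,m}$ and using $E_{p-1}\equiv 1$, $E_{p+1}\equiv E_2\pmod p$, I obtain $\overline{L_m(\phi_m)}=\overline{g_m}$ with
$$
g_m=\hat\phi_m E_{p-1}+\tfrac{(k-1)m}{3}E_{p+1}\phi_m\in HJ^{\delta'}_{k+p+1,m}(\Gamma^J(\mathcal{O}),\mathbb{Z}_{(p)}),
$$
where $\delta'=\delta$ when $p\equiv 3\pmod 4$ and $\delta'=-\delta$ when $p\equiv 1\pmod 4$, the slash computation being the one verifying \eqref{cas}. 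The point I would stress is that this $\delta'$ is exactly the parity that \thmref{dparity} prescribes for weight $k+p+1$: since $p+1\equiv 0\pmod 4$ for $p\equiv 3$ and $p+1\equiv 2\pmod 4$ for $p\equiv 1$, one has $k+p+1\equiv k$ resp.\ $k+2\pmod 4$, matching $\delta$ resp.\ $-\delta$. Hence $\overline{\mathbb{D}(F)}$ has Fourier--Jacobi coefficients that are reductions of honest weight-$(k+p+1)$ Hermitian Jacobi forms carrying the single consistent parity required of a symmetric Hermitian modular form of that weight.

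The cusp-form assertion is then immediate, and I would record it separately: the $T$-th Fourier coefficient of $\mathbb{D}(F)$ is $4\det(T)A_F(T)$, which vanishes whenever $\det T=0$, i.e.\ for every $T$ on the boundary of the cone $\Delta_2$ (including $T=0$). Thus $\overline{\mathbb{D}(F)}$ is supported on positive-definite $T$, so any modular $G$ congruent to it is automatically a Hermitian cusp form.

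The main obstacle is the remaining assembly step: promoting the formal series $\sum_m \overline{g_m}\,e(m\tau')$ to the reduction of a genuine symmetric Hermitian modular form $G\in M_{k+p+1}(\Gamma^2(\mathcal{O}),\mathrm{det}^{(k+p+1)/2},\mathbb{Z}_{(p)})^{sym}$. Matching parities across all $m$ is necessary but not by itself sufficient, since the degree-$2$ transformation law couples the $\phi_m$. To resolve this I would mimic the Serre-derivative construction of \propref{jump} at the level of degree $2$: exhibit $G$ by correcting $\mathbb{D}(F)$ with a multiple of a weight-$(p+1)$ symmetric Hermitian modular form congruent mod $p$ to the weight-$2$ quasimodular object governing the failure of $\mathbb{D}$ to preserve modularity, so that $\mathbb{D}(F)\equiv E_{p-1}\,\mathbb{D}^{\flat}(F)+(\text{weight-}(p+1)\text{ correction})\pmod p$ with $\mathbb{D}^{\flat}(F)$ genuinely modular of weight $k+2$. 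Making this diagonal weight-$2$ correction explicit (the degree-$2$ analogue of $E_2$) is the technical heart; once it is in place, the structure results \thmref{nagaokathm1}, \corref{divide} and \thmref{weightdifference}, together with $F_{p-1}\equiv 1\pmod p$, let me realise $G$ as a polynomial in $H_4,H_6,\chi_8,F_{10},F_{12}$ of the correct weight. This assembly is precisely the content carried out by Nagaoka \cite{nagaoka2}, which the present formulation cites.
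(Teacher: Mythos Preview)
The paper does not prove \thmref{lessthan}: it is merely stated and attributed to Kikuta and Nagaoka \cite[Theorem~3]{nagaoka2}. So there is no ``paper's own proof'' to compare against beyond that citation, and you yourself correctly land on the same reference in your final paragraph.

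Your sketch is a reasonable outline of how such a proof is structured, and you have correctly isolated the genuine content: the assembly step, i.e.\ producing a degree-$2$ analogue of the weight-$2$ quasimodular correction so that $\mathbb{D}(F)$ differs mod~$p$ from a true weight-$(k+p+1)$ symmetric Hermitian modular form. That is exactly what Kikuta--Nagaoka construct. Your cusp-form remark (Fourier coefficients scale by $4\det T$) is also correct and is how cuspidality follows once $G$ exists.

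One comment on the logic: the Fourier--Jacobi reduction you carry out in the first two paragraphs is motivational but not actually load-bearing. Knowing that each $\overline{L_m(\phi_m)}$ individually lies in $HJ^{\delta'}_{k+p+1,m}(\Gamma^J(\mathcal{O}),\mathbb{F}_p)$ with the parity matching \thmref{dparity} does \emph{not} bring you closer to a Hermitian modular form $G$, because---as you say---the degree-$2$ transformation law is not determined by the Jacobi transformation laws of the coefficients. Once the degree-$2$ Serre-derivative construction is in hand, it gives $G$ directly and the Jacobi-level analysis is superfluous. So your proof really has one step, namely the citation; the rest is (useful) heuristics for why the weight and parity come out right.
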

We next recall Rankin-Cohen brackets of Hermitian modular forms which is a main ingredient in the proof of \propref{hermitiancycle}. Martin and Senadheera \cite{sena1} have defined Rankin-Cohen brackets of two Hermitian modular forms. We need only the first Rankin-Cohen bracket of two Hermitian modular forms for our purpose. Therefore we define only the first Rankin-Cohen bracket here. The first Rankin-Cohen bracket $[F_1, F_2]_1$ of two Hermitian modular forms $F_1$ and $F_2$ with 
$F_i\in M_{k_i}(\Gamma^2(\mathcal{O}), {\rm{det}}^{k_i/2})$ for $i=1,2$, is defined by
$$
[F_1, F_2]_1=(k_1-1)(k_2-1)\mathbb{D}(FG)-(k_2-1)(k_1+k_2-1)\mathbb{D}(F_1)F_2-(k_1-1)(k_1+k_2-1)F_1\mathbb{D}(F_2).
$$

We remark here that our definition of the first Rankin-Cohen bracket slightly different from the definition of Martin and Senadheera. But up to some constant multiple both the definitions are same. It is well known that with the above assumptions on $F_1$ and $F_2$, we have
$[F_1, F_2]_1\in  M_{k_1+k_2+2}(\Gamma^2(\mathcal{O}), {\rm{det}}^{(k_1+k_2+2)/2})$.
The following lemma follows from a straight forward computation.
\begin{lem}\label{p-integral}
If $F_1 \in M_{k_1}(\Gamma^2(\mathcal{O}), {\rm{det}}^{k_1/2} ,{\mathbb{Z}_{(p)}})^{sym}$ and $F_2 \in M_{k_2}(\Gamma^2(\mathcal{O}), {\rm{det}}^{k_2/2}, \mathbb{Z}_{(p)})^{sym}$,
then $[F_1, F_2]_1\in M_{k_1+k_2+2}(\Gamma^2(\mathcal{O}), {\rm{det}}^{(k_1+k_2+2)/2}, \mathbb{Z}_{(p)})^{sym}$.
\end{lem}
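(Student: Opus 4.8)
The plan is to reduce the lemma to an elementary check on Fourier coefficients. As recalled just before the lemma statement, $[F_1,F_2]_1$ already lies in $M_{k_1+k_2+2}(\Gamma^2(\mathcal{O}),{\rm det}^{(k_1+k_2+2)/2})$, so I only need to verify that it is symmetric and that all of its Fourier coefficients are $p$-integral. Symmetry is immediate: transposition $Z\mapsto Z^t$ merely interchanges the variables $z_1$ and $z_2$, and since $\mathbb{D}$ involves the symmetric operator $\partial^2/\partial z_1\partial z_2$, it commutes with this transposition; as $F_1,F_2$ are symmetric and products and $\mathbb{Z}_{(p)}$-linear combinations of symmetric forms are symmetric, so is $[F_1,F_2]_1$. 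Equivalently, on Fourier coefficients symmetry reads $A_F(T)=A_F(T^t)$, and $\mathbb{D}$ preserves this relation because $\det(T)=\det(T^t)$.

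Next I would compute the $T$-th coefficient explicitly. Writing $F_i=\sum_T A_{F_i}(T)e(tr(TZ))$, recall that $\mathbb{D}$ multiplies the $T$-th coefficient by $4\det(T)$, and that the Cauchy product gives the $T$-th coefficient of $F_1F_2$ as $\sum_{T_1+T_2=T}A_{F_1}(T_1)A_{F_2}(T_2)$, a finite sum because $T_1,T_2\ge 0$ and $T_1+T_2=T$ leaves only finitely many choices. Substituting these into the definition of $[F_1,F_2]_1$ yields
\[
A_{[F_1,F_2]_1}(T)=\sum_{T_1+T_2=T}\!\Big[(k_1-1)(k_2-1)\,4\det(T)-(k_2-1)(k_1+k_2-1)\,4\det(T_1)-(k_1-1)(k_1+k_2-1)\,4\det(T_2)\Big]A_{F_1}(T_1)A_{F_2}(T_2).
\]

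The crux is a single arithmetic observation. For $T=\begin{pmatrix} n & r\\ \overline{r} & m\end{pmatrix}\in\Delta_2$ with $r=\frac{a}{2}+\frac{b}{2}i$, $a,b\in\mathbb{Z}$, one has $4\det(T)=4nm-(a^2+b^2)\in\mathbb{Z}$, even though $\det(T)$ a priori lies only in $\frac14\mathbb{Z}$ since $N(r)\in\frac14\mathbb{Z}$. Hence $4\det(T)$, $4\det(T_1)$, $4\det(T_2)$ are all integers, so the bracketed factor is an integer, being an integer combination of these with the integer weights $(k_i-1)$ and $(k_1+k_2-1)$. Since $A_{F_1}(T_1),A_{F_2}(T_2)\in\mathbb{Z}_{(p)}$ by hypothesis, and a finite $\mathbb{Z}_{(p)}$-linear combination of elements of $\mathbb{Z}_{(p)}$ stays in $\mathbb{Z}_{(p)}$, every $A_{[F_1,F_2]_1}(T)$ lies in $\mathbb{Z}_{(p)}$, completing the proof.

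I expect no serious obstacle here; the one point worth flagging is that the factor $4$ built into the definition of $\mathbb{D}$ is exactly what clears the denominators arising from $N(r)\in\frac14\mathbb{Z}$, so it is essential to carry the heat operator through in the form $4\det(T)$ rather than $\det(T)$. The only other care needed is the routine verification that the Cauchy-product sum over $T_1+T_2=T$ is finite, which follows from positive semidefiniteness.
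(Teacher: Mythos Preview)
Your argument is correct and is precisely the ``straightforward computation'' the paper alludes to in lieu of a proof: you verify symmetry via the invariance of $\mathbb{D}$ under $Z\mapsto Z^t$, and $p$-integrality via the explicit Fourier coefficient formula together with the key fact that $4\det(T)\in\mathbb{Z}$ for $T\in\Delta_2$. There is nothing to add.
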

We next prove a result on filtrations which will be used to prove our main results of the next section.
\begin{prop}\label{hermitiancycle}
Let $F\in M_k(\Gamma^2(\mathcal{O}), {\rm{det}}^{k/2}, \mathbb{Z}_{(p)})^{sym}$. Suppose that there is an integer $m$ such that $p\nmid m$ and the $m^{th}$ Fourier-Jacobi coefficient $\phi_m$ of $F$ satisfies $\Omega(\phi_m)=\mho(F)$. Then 
$$
\mho(\mathbb{D}(F)) \le \mho(F)+p+1,
$$
with equality if and only if $p\nmid (\mho(F)-1)$.
\end{prop}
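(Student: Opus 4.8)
The plan is to push the whole problem down to the Fourier--Jacobi coefficients of $F$, where the heat operator $\mathbb{D}$ restricts to the operators $L_m$ of Section~4. First I would record the basic compatibility: writing $F=\sum_{m\ge0}\phi_m(\tau,z_1,z_2)e(m\tau')$ and comparing Fourier expansions, the factor $4\det(T)=4(nm-N(r))$ that $\mathbb{D}$ attaches to $T=\medmatrix{n}{r}{\overline r}{m}$ is exactly the factor that $L_m$ attaches to $c(\phi_m;n,r)$; hence the $m$-th Fourier--Jacobi coefficient of $\mathbb{D}(F)$ is $L_m(\phi_m)$. For the upper bound I would set $l=\mho(F)$, choose $\tilde F\in M_l(\Gamma^2(\mathcal{O}),\mathrm{det}^{l/2},\mathbb{Z}_{(p)})^{sym}$ with $F\equiv\tilde F\pmod p$, and apply \thmref{lessthan} to $\tilde F$. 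Since $\mathbb{D}$ commutes with reduction modulo $p$ we get $\mathbb{D}(F)\equiv\mathbb{D}(\tilde F)\equiv G\pmod p$ for some symmetric $G$ of weight $l+p+1$, so $\mho(\mathbb{D}(F))$ is defined and $\mho(\mathbb{D}(F))\le l+p+1$.

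For the equality when $p\nmid(\mho(F)-1)$ I would use that the filtration of a Hermitian modular form dominates those of its Fourier--Jacobi coefficients: realizing $\mathbb{D}(F)$ by a form of weight $N:=\mho(\mathbb{D}(F))$ shows $\Omega\bigl([\mathbb{D}(F)]_m\bigr)\le N$. By hypothesis $\Omega(\phi_m)=l$ and $p\nmid m$, so $p\nmid(\Omega(\phi_m)-1)m$, and \thmref{hjf} gives $\Omega(L_m(\phi_m))=l+p+1$. Because $[\mathbb{D}(F)]_m=L_m(\phi_m)$, this forces $N\ge l+p+1$, which together with the upper bound yields $\mho(\mathbb{D}(F))=l+p+1$.

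The hard part is the strict inequality when $p\mid(\mho(F)-1)$, and here I would argue by contradiction using the first Rankin--Cohen bracket together with \lemref{existencefiltration}. Suppose $\mho(\mathbb{D}(\tilde F))=l+p+1$ and realize it by $H\in M_{l+p+1}(\Gamma^2(\mathcal{O}),\mathrm{det}^{(l+p+1)/2},\mathbb{Z}_{(p)})^{sym}$ with $\mathbb{D}(\tilde F)\equiv H\pmod p$. Applying \lemref{existencefiltration} to $H$ produces $R=P_R(H_4,H_6,\chi_8,F_{10},F_{12})$ of weight $k'$ with $p\nmid k'(k'-1)$, $\mho(R)=k'$ and $\overline B\nmid\overline P_H\overline P_R$; since $P_{HR}=P_HP_R$, \corref{divide} then gives $\mho(HR)=l+p+1+k'$. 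On the other hand, $[\tilde F,R]_1\in M_{l+k'+2}(\Gamma^2(\mathcal{O}),\mathrm{det}^{(l+k'+2)/2},\mathbb{Z}_{(p)})^{sym}$ is $p$-integral by \lemref{p-integral}, and because $p\mid(l-1)$ the two terms of the bracket carrying the factor $(l-1)$ vanish modulo $p$, leaving $[\tilde F,R]_1\equiv-(k'-1)(l+k'-1)\mathbb{D}(\tilde F)R\pmod p$. As $p\nmid k'(k'-1)$ and $l+k'-1\equiv k'\pmod p$, the coefficient $(k'-1)(l+k'-1)$ is a unit mod $p$, so $\mathbb{D}(\tilde F)R$ is congruent to a form of weight $l+k'+2$, whence $\mho(HR)=\mho(\mathbb{D}(\tilde F)R)\le l+k'+2$. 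Comparing the two values of $\mho(HR)$ forces $l+p+1+k'\le l+k'+2$, i.e. $p\le1$, a contradiction; thus $\mho(\mathbb{D}(\tilde F))<l+p+1$. I expect this last step---selecting a multiplier $R$ that keeps $HR$ at full filtration while the Rankin--Cohen bracket collapses its weight---to be the main obstacle, with the identity $[\mathbb{D}(F)]_m=L_m(\phi_m)$ and \thmref{hjf} serving as the engine for the equality case.
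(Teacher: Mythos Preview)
Your proposal is correct and follows essentially the same route as the paper: reduce to a representative of weight $l=\mho(F)$, use \thmref{lessthan} for the upper bound, \thmref{hjf} on the $m$-th Fourier--Jacobi coefficient for the equality case, and for the strict inequality derive a contradiction from \lemref{existencefiltration}, \corref{divide}, and the first Rankin--Cohen bracket reducing modulo $p$ to a unit times $\mathbb{D}(\tilde F)R$. Your computation $[\tilde F,R]_1\equiv -(k'-1)(l+k'-1)\mathbb{D}(\tilde F)R\pmod p$ with $(l+k'-1)\equiv k'\pmod p$ is exactly the paper's $-(k'-1)k'\,\mathbb{D}(F)R$ after the WLOG reduction $\mho(F)=k$.
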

\begin{proof}
The proof is along a similar line of proof of \cite[Proposition 4]{choies}. If $\mho(F)=k'<k$, then there exists a Hermitian modular form 
$G\in M_{k'}(\Gamma^2(\mathcal{O}), \mbox{det}^{k'/2}, \mathbb{Z}_{(p)})$ such that 
$F\equiv G \pmod p$. Then we have $\mathbb{D}(F)\equiv \mathbb{D}(G)\pmod p$ and therefore we have $\mho(\mathbb{D}(F))=\mho(\mathbb{D}(G))$. Thus without loss of generality we assume that 
$\mho(F)=k$. Let 
$$
F(\tau, z_1, z_2, \tau')=\sum_{m=0}^{\infty} \phi_m(\tau, z_1, z_2) e(\tau')
$$
be the Fourier-Jacobi expansion of $F$. Then
$$
\mathbb{D}(F)=\sum_{m=0}^{\infty}L_m(\phi_m)e(\tau').
$$
By the hypothesis there is an integer $m$ such that $p\nmid m$ and $\Omega(\phi_m)=k$. If $p\nmid (k-1)$, then by  \thmref{hjf} one has $\Omega(L_m(\phi_m))=k+p+1$. Also for each non-negative integer $m$, we trivially observe that
$$
\Omega(L_m(\phi_m)) \le \mho(\mathbb{D}(F)).
$$
Also from \thmref{lessthan}, we have
$$
\mho(\mathbb{D}(F))\le k+p+1.
$$
Therefore we obtain
$$
\mho(\mathbb{D}(F))=k+p+1.
$$
Now conversely assume that $p\mid (k-1)$ and $\mho(\mathbb{D}(F))=k+p+1$. 
Since $\mho(\mathbb{D}(F))=k+p+1$, there exists $G\in M_{k+p+1}(\Gamma^2(\mathcal{O}), \mbox{det}^{(k+p+1)/2}, \mathbb{Z}_{(p)})^{sym}$ such that $\mathbb{D}(F)\equiv G \pmod p$. Let $P_G \in \mathbb{Z}_{(p)}[x_1, x_2, x_3, x_4, x_5]$ be such that 
$G=P_G(H_4, H_6, \chi_8, F_{10}, F_{12})$. Since $\mho(G)=k+p+1$, $G\not\equiv 0 \pmod{p}$.
Then by \lemref{existencefiltration}, there exists $R\in M_{k'}(\Gamma^2(\mathcal{O}), \mbox{det}^{k'/2}, \mathbb{Z}_{(p)})^{sym}$ with $P_R\in \mathbb{Z}_{(p)}[x_1, x_2, x_3, x_4, x_5] $ and $R=P_R(H_4, H _6, \chi_8, F_{10}, F_{12})$ such that $\mho(R)=k'$, $p \nmid k'(k'-1)$ and 
$\overline{B}$ does not divide the product $\overline{P}_R \overline{P}_G$. Therefore by \corref{divide} we have $\mho(GR)=k+k'+p+1$. 
Also by \lemref{p-integral} we have $[F, R]_1 \in M_{k+k'+2}(\Gamma^2(\mathcal{O}), \mbox{det}^{(k+k'+2)/2}, \mathbb{Z}_{(p)})^{sym}$ and 
$$
[F, R]_1 \equiv -(k'-1)k' \mathbb{D}(F)R\pmod p.
$$
Therefore
$$
k+k'+p+1=\mho(GR)=\mho(\mathbb{D}(F)R)=\mho([F, R]_1) \le k+k'+2.
$$
This gives a contradiction.

\end{proof}

\section{Congruences in Hermitian modular forms}
In this section we study $U(p)$ congruences and Ramanujan-type congruences for Hermitian modular forms.

\subsection{$U(p)$ congruences}
\begin{defn}
Let 
$$
F(\tau, z_1, z_2, \tau')=\sum_{\substack{n, m \in \mathbb{Z}, r\in \mathcal{O}^{\#}\\ nm-N(r)\ge 0}}A_F(n, r, m)q^n\zeta_1^r\zeta_2^{\overline{r}}(q')^{m} \in M_k(\Gamma^2(\mathcal{O}), {\rm{det}}^{k/2}).
$$
The Atkin's $U(p)$ operator on $F$ is defined by
$$
F\mid U(p)=\sum_{\substack{n, m \in \mathbb{Z}, r\in \mathcal{O}^{\#} \\ nm-N(r)\ge 0 \\p\mid 4(nm-N(r))}}A_F(n, r, m)q^n \zeta_1^r\zeta_2^{\overline{r}}(q')^m.
$$
\end{defn}
We have the following characterization of $U(p)$ congruences in terms of filtrations. This result generalizes the main result of Choi, Choie and Richter \cite[Theorem 1]{choies} to the case of Hermitian modular forms.
\begin{thm}\label{hermitiancong}
Let $p\ge 5$ be a prime.
Let 
$$
F(\tau, z_1, z_2, \tau')=\sum_{\substack{n, m \in \mathbb{Z}, r\in \mathcal{O}^{\#}\\ nm-N(r)\ge 0}}A_F(n, r, m)q^n\zeta_1^r\zeta_2^{\overline{r}}(q')^{m} \in M_k(\Gamma^2(\mathcal{O}), {\rm{det}}^{k/2}, \mathbb{Z}_{(p)})^{sym}
$$
with $p> k$. Assume that there exist $n, m\in \mathbb{Z}$ and $r\in \mathcal{O}^{\#}$ such that $p\nmid nm$ and $A_F(n, r, m)\not \equiv 0\pmod p$. Then we have
\[ \mho(\mathbb{D}^{p+2-k}(F)))=\begin{cases} 
      2p+4-k & {\rm{if}}~~ F\mid U(p)\not \equiv 0 \pmod p, \\
      p+5-k & {\rm{if}}~~F\mid U(p)\equiv 0 \pmod p.
   \end{cases}
\]
\end{thm}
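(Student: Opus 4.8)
The plan is to treat the triple $(F,\mathbb{D},\mho)$ exactly as the triple $(\phi,L_m,\Omega)$ is treated in \thmref{hjfzero}, transporting that heat-cycle argument to the Hermitian modular setting. First I would record the structural facts that make this possible. Writing the Fourier--Jacobi expansion $F=\sum_{m'\ge 0}\phi_{m'}(\tau,z_1,z_2)e(m'\tau')$, the formula for $\mathbb{D}$ gives $\mathbb{D}^j(F)=\sum_{m'\ge 0}L_{m'}^j(\phi_{m'})e(m'\tau')$, so the $m'$-th Fourier--Jacobi coefficient of $\mathbb{D}^j(F)$ is $L_{m'}^j(\phi_{m'})$; reducing a genuine weight-$\mho(\mathbb{D}^j(F))$ form congruent to $\mathbb{D}^j(F)$ then yields $\Omega(L_{m'}^j(\phi_{m'}))\le \mho(\mathbb{D}^j(F))$ for all $m',j$. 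The hypothesis furnishes an index $m$ with $p\nmid m$ and $A_F(n,r,m)=c(\phi_m;n,r)\not\equiv 0\pmod p$, so $F\not\equiv 0$ and $\phi_m\not\equiv 0$. Since $p>k$, \thmref{weightdifference} forces $\mho(F)=k$ and \thmref{hermitiancase} forces $\Omega(\phi_m)=k$, hence $\Omega(\phi_m)=\mho(F)=k$ and the hypothesis of \propref{hermitiancycle} holds for $F$. Finally, because $\mathbb{D}^{p-1}$ multiplies the $T$-th Fourier coefficient by $(4\det T)^{p-1}\in\{0,1\}\pmod p$, one has $F\mid U(p)\equiv 0\pmod p$ if and only if $\mathbb{D}^{p-1}(F)\equiv F\pmod p$, the analogue of the criterion used in Section 4.

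With these in hand I would run the heat cycle of $F$. As long as the hypothesis of \propref{hermitiancycle} is available, \propref{hermitiancycle} plays the role of \thmref{hjf} and \thmref{weightdifference} the role of \thmref{hermitiancase}, so the proofs of \lemref{heatprop} and \thmref{hjfzero} transcribe verbatim: $\mho(\mathbb{D}^{j+1}(F))\le \mho(\mathbb{D}^j(F))+p+1$ with equality off the high points, each genuine drop has the form $p+1-s(p-1)$ with $s\ge 1$, the cycle has one or two low points, and $\sum_j s_j=p+1$. In the rising phase $0\le j\le p+1-k$ this is immediate: $\mho(\mathbb{D}^j(F))=k+j(p+1)=\Omega(L_m^j(\phi_m))$, so the $m$-th coefficient keeps realizing the filtration and $j=p+1-k$ is the first high point. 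I would then locate the two high points, at $p+1-k$ and at $p-2$ when $F\mid U(p)\equiv 0$ (so $F$ lies in its own cycle) or at $p+1-k$ and $p-1$ when $F\mid U(p)\not\equiv 0$, solve $s_1+s_2=p+1$ with the high-point spacing relation as in \thmref{hjfzero}, and read off $\mho(\mathbb{D}^{p+2-k}(F))=p+5-k$ or $2p+4-k$ respectively; excluding the one-low-point case uses $k\ge 4$ exactly as there.

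The main obstacle is precisely the clause \emph{as long as the hypothesis of \propref{hermitiancycle} is available}. It is clean through the first high point, but the equality $\mho(\mathbb{D}^j(F))=\Omega(L_m^j(\phi_m))$ can fail after a drop: if some other coefficient $\phi_{m'}$ has $\phi_{m'}\mid U(p)\not\equiv 0$ while $\phi_m\mid U(p)\equiv 0$, then $\mho(\mathbb{D}^{p+2-k}(F))=2p+4-k$ strictly exceeds $\Omega(L_m^{p+2-k}(\phi_m))=p+5-k$, so the originally chosen $m$ no longer realizes the filtration. Hence what must be shown is that at every stage $j$ the value $\mho(\mathbb{D}^j(F))$ is still realized by \emph{some} Fourier--Jacobi coefficient of index prime to $p$, so that \propref{hermitiancycle} keeps applying; equivalently, that filtration drops in the cycle of $F$ occur only at high points. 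I expect to prove this from two ingredients. The first is the self-contained Rankin--Cohen estimate hidden in the proof of \propref{hermitiancycle}: for a genuine $\widehat G$ of weight $w$ with $p\mid w-1$ one has $[\widehat G,R]_1\equiv -(k'-1)k'\,\mathbb{D}(\widehat G)R\pmod p$, and with $R$ supplied by \lemref{existencefiltration} this yields $\mho(\mathbb{D}(\widehat G))\le w+2$, forcing a strict drop at every high point with no reference to Fourier--Jacobi coefficients. The second is a realization statement, $\mho(\mathbb{D}^j(F))=\max_{p\nmid m'}\Omega(L_{m'}^j(\phi_{m'}))$, which keeps \propref{hermitiancycle} applicable and rules out spurious drops away from high points. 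Once the drops are confined to high points, the combinatorial core of \lemref{heatprop} and \thmref{hjfzero} closes the argument, with the dichotomy governed by $F\mid U(p)$ through $\mathbb{D}^{p-1}(F)\equiv F$.
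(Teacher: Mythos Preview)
Your approach is essentially the paper's: establish that some Fourier--Jacobi coefficient $\phi_m$ with $p\nmid m$ satisfies $\Omega(\phi_m)=\mho(F)=k$ (the paper does this by a short symmetry argument, noting $A_F(n,r,m)=A_F(m,r,n)$ to rule out the case where every such $\phi_m$ vanishes mod $p$), and then invoke \propref{hermitiancycle} together with the combinatorics of \lemref{heatprop} and \thmref{hjfzero}. The paper's own proof is in fact terser than yours---after verifying the hypothesis of \propref{hermitiancycle} for $F$ it simply writes ``by using \propref{hermitiancycle} and following a similar argument as in the proof of \thmref{hjfzero}, we get the required result''---so the propagation issue you flag (that after a drop the index realizing the filtration may change) is not addressed explicitly there either; your discussion of it, and the observation that the Rankin--Cohen half of \propref{hermitiancycle} needs no Fourier--Jacobi hypothesis, goes beyond what the paper spells out.
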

\begin{proof}
Let 
$$
F(\tau, z_1, z_2, \tau')=\sum_{m \ge 0} \phi_{m}(\tau, z_1, z_2) e(m\tau')
$$
be the Fourier Jacobi expansion of $F$. We will first show that there exists an integer  
$m$ with $p\nmid m$ such that $\mho(F)=\Omega(\phi_m)$. Suppose on the contrary that for every integer $m$ with $p \nmid m$, we have $\Omega(\phi_m)<\mho(F)$. By the hypothesis 
$F\not\equiv  A_F(0, 0, 0)\pmod{p}$. Therefore since $p>k$, by \thmref{weightdifference} we have 
$\mho(F)=k$. Thus $\Omega(\phi_m)<k$ for each integer $m$ with $p\nmid m$. Therefore by 
\thmref{hermitiancase}, we have $\phi_m\equiv 0 \pmod{p}$ for each $m$ with $p\nmid m$, i.e.,
$A_F(n, r, m)\equiv 0 \pmod{p}$ for each $m$ with $p\nmid m$. Since $F(\tau, z_1, z_2, \tau')=F(\tau', z_1, z_2, \tau)$, we have $A_F(n, r, m)=A_F(m, r, n)$ and therefore we deduce that $A_F(n, r, m)\equiv 0 \pmod p$ for $p \nmid nm$. This gives a contradiction to the hypothesis of the theorem. Therefore there exists an integer $m$ with $p\nmid m$ such that 
$\Omega(\phi_m)=\mho(F)$. Now by using \thmref{hermitiancycle} and following a similar argument as in the proof of \thmref{hjfzero}, we get the required result.
\end{proof}

\subsection{Ramanujan-type congruences}
\begin{defn}
Let 
$$
F(\tau, z_1, z_2, \tau')=\sum_{\substack{n, m \in \mathbb{Z}, r\in \mathcal{O}^{\#}\\ nm-N(r)\ge 0}}A_F(n, r, m)q^n \zeta_1^r \zeta_2^{\overline{r}}(q')^m \in M_k(\Gamma^2(\mathcal{O}), {\rm{det}}^{k/2}, \mathbb{Z}_{(p)}).
$$ 
We say that $F$ has a Ramanujan-type congruence at $b \not \equiv 0\pmod p$ if $A_F(n, r, m)\equiv 0 \pmod p$ whenever $4(nm- N(r)) \equiv b \pmod p$. 
\end{defn}
In the next theorem, we prove results on existence and non-existence of Ramanujan-type congruences for symmetric Hermitian modular forms of degree $2$. A similar result for Siegel modular forms of degree $2$ has been proved by Dewar and Richter \cite[Theorem 1.2]{dewar2}. We follow their method of proof to prove our result.
\begin{thm}\label{nonzero}
Let $p\ge 5$ be a prime. Let 
$$
F(\tau, z_1, z_2, \tau')=\sum_{\substack{n, m \in \mathbb{Z}, r\in \mathcal{O}^{\#}\\  nm-N(r) \ge 0}}A_F(n, r, m)q^n \zeta_1^r \zeta_2^{\overline{r}}(q')^m \in M_k(\Gamma^2(\mathcal{O}), {\rm{det}}^{k/2}, \mathbb{Z}_{(p)})^{sym}.
$$
Then $F$ has a Ramanujan-type congruence at  $b \pmod p$ if and only if 
$$
\mathbb{D}^{\frac{p+1}{2}}(F)\equiv -\left(\frac{b}{p} \right) \mathbb{D}(F) \pmod p,
$$
where $\left(\frac{\cdot}{p} \right)$ is the Legendre symbol. Moreover, if $p> k$ with $p\not= 2k-3$ and there exist integers $n$ and $m$ such that $p\nmid nm$ and $A_F(n, r, m) \not \equiv 0 \pmod p$, then $F$ does not have a Ramanujan-type congruence at $b \pmod p$. 
\end{thm}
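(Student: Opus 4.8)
The plan is to reduce both assertions to the corresponding statements for the Fourier--Jacobi coefficients of $F$, mirroring the argument of Dewar and Richter \cite{dewar2} in the Siegel case. Write the Fourier--Jacobi expansion $F(\tau,z_1,z_2,\tau')=\sum_{m\ge 0}\phi_m(\tau,z_1,z_2)e(m\tau')$; by \thmref{dparity} each $\phi_m$ lies in $HJ^{\delta}_{k,m}(\Gamma^J(\mathcal{O}),\mathbb{Z}_{(p)})$. As noted in the proof of \propref{hermitiancycle}, the heat operator acts one Fourier--Jacobi coefficient at a time, $\mathbb{D}(F)=\sum_{m\ge 0}L_m(\phi_m)e(m\tau')$, and iterating yields $\mathbb{D}^{j}(F)=\sum_{m\ge 0}L_m^{j}(\phi_m)e(m\tau')$ for all $j\ge 1$. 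Since $A_F(n,r,m)=c(\phi_m;n,r)$, the definitions show directly that $F$ has a Ramanujan-type congruence at $b$ if and only if every $\phi_m$ does.

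For the characterization I would simply compare Fourier--Jacobi coefficients modulo $p$. The congruence $\mathbb{D}^{(p+1)/2}(F)\equiv -\left(\frac{b}{p}\right)\mathbb{D}(F)\pmod p$ is equivalent, coefficient by coefficient in the variable $e(m\tau')$, to $L_m^{(p+1)/2}(\phi_m)\equiv -\left(\frac{b}{p}\right)L_m(\phi_m)\pmod p$ for every $m\ge 0$. By \propref{notzero} each of these is equivalent to $\phi_m$ having a Ramanujan-type congruence at $b$, and hence, by the last sentence of the previous paragraph, the whole system is equivalent to $F$ having a Ramanujan-type congruence at $b$. This proves the first statement.

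For the non-existence statement I would argue by contradiction: assume $F$ has a Ramanujan-type congruence at $b$. The hypothesis furnishes $n_0,m_0\in\mathbb{Z}$ and $r_0\in\mathcal{O}^{\#}$ with $p\nmid n_0m_0$ and $A_F(n_0,r_0,m_0)\not\equiv 0\pmod p$; in particular $p\nmid m_0$ and $\phi_{m_0}\not\equiv 0\pmod p$. Since $p>k$ and $\phi_{m_0}\not\equiv 0$, its filtration $\Omega(\phi_{m_0})$, which is at most $k$ and congruent to $k$ modulo $p-1$ by \thmref{hermitiancase}, must equal $k$ (the value $0$ is excluded because $\phi_{m_0}$ is a nonzero Jacobi form of positive index). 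Moreover $F$ has a Ramanujan-type congruence at $b$, so $\phi_{m_0}$ does as well. Applying \thmref{enequality} to $\phi_{m_0}$---legitimate because $p>k$, $p\ne 2k-3$ and $p\nmid m_0$---then contradicts this, provided the hypothesis $L_{m_0}(\phi_{m_0})\not\equiv 0\pmod p$ of that theorem is verified.

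The crux is exactly this last non-vanishing. Reading it off the given coefficient $A_F(n_0,r_0,m_0)$ does not work, since $L_{m_0}$ multiplies $c(\phi_{m_0};n,r)$ by $4(nm_0-N(r))$ and the available coefficient may sit on the locus $n_0m_0\equiv N(r_0)\pmod p$, where it is annihilated. Instead I would obtain it from the filtration jump in \thmref{hjf}: since $k$ is even and $0<k<p$ we have $1\le k-1<p$, so $p\nmid(k-1)$, and as $p\nmid m_0$ this gives $p\nmid(\Omega(\phi_{m_0})-1)m_0$; hence \thmref{hjf} yields $\Omega(L_{m_0}(\phi_{m_0}))=\Omega(\phi_{m_0})+p+1=k+p+1>0$, whence $L_{m_0}(\phi_{m_0})\not\equiv 0\pmod p$. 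This closes the argument, and it uses only $\phi_{m_0}\not\equiv 0$ together with $p\nmid(k-1)m_0$, avoiding any delicate control of individual Fourier coefficients of $L_{m_0}(\phi_{m_0})$.
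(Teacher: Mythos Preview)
Your proof is correct and follows essentially the same route as the paper's: both parts pass to the Fourier--Jacobi coefficients, invoke \propref{notzero} for the characterization, and for non-existence pick an index $m_0$ with $p\nmid m_0$ and $\phi_{m_0}\not\equiv 0$, establish $\Omega(\phi_{m_0})=k$, use \thmref{hjf} (with $p\nmid(k-1)m_0$) to get $L_{m_0}(\phi_{m_0})\not\equiv 0$, and then apply \thmref{enequality}. The only cosmetic difference is that the paper first records $\mho(F)=k$ before asserting $\Omega(\phi_{m_0})=k$, whereas you go to $\Omega(\phi_{m_0})=k$ directly; your contradiction framing is likewise equivalent to the paper's direct statement.
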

\begin{proof}
Let the Fourier-Jacobi expansion of $F$ be given by
$$
F(\tau, z_1, z_2, \tau')=\sum_{m=0}^{\infty}\phi_m(\tau, z_1, z_2) e(\tau').
$$
We observe that $F$ has a Ramanujan-type congruence at $b\pmod{p}$ if and only if $\phi_m$ has a Ramanujan-type congruence at $b\pmod{p}$ for all $m$. By \propref{notzero}, it is equivalent to the statement that for each $m$, we have
\begin{equation}\label{hermi}
L_m^{\frac{p+1}{2}}(\phi_m)\equiv -\left(\frac{b}{p}\right) L_m(\phi_m) \pmod p.
\end{equation}
Since 
$$
\mathbb{D}(F)=\sum_{m=0}^{\infty} L_m(\phi_m)e(\tau'),
$$
we deduce that $F$ has a Ramanujan-type congruence at  $b \pmod p$ if and only if 
$$
\mathbb{D}^{\frac{p+1}{2}}(F)=\sum_{m=0}^{\infty}L_m^{\frac{p+1}{2}}(\phi_m)e(\tau')
\equiv -\left(\frac{b}{p}\right)\sum_{m=0}^{\infty}L_m(\phi_m)
\equiv -\left(\frac{b}{p} \right) \mathbb{D}(F) \pmod p.
$$
This proves the first part of the theorem. Now we prove the second part of the theorem.
Since there exist integers $n$ and $m$ such that $p\nmid nm$ and 
$A_F(n, r, m) \not \equiv 0 \pmod p$, $\mho(F)\not =0$. Therefore 
$\mho(F)=k$ as $p>k$.
Also by the same reason, there exists an integer $m>0$ with $p\nmid m$ 
such that $\phi_m \not \equiv 0 \pmod{p}$
and $\Omega(\phi_m)=k$. Then by \thmref{hjf}, $\Omega(L_m(\phi_m))=k+p+1$. In particular, we have $L_m(\phi_m)\not \equiv 0 \pmod{p}$. Now applying \thmref{enequality}, we deduce that 
$\phi_m$ does not have a Ramanujan-type congruence at $b \pmod{p}$. This implies that $F$ does not have a Ramanujan type congruence at $b\pmod{p}$.
\end{proof}

\section{Examples}

\subsection{$U(p)$ congruences}
We state the following result which will be used to get examples of Hermitian modular forms having $U(p)$ congruences. The proof of the result is obvious.
\begin{prop}\label{Up}
Let $F\in M_k(\Gamma^2(\mathcal{O}), {\rm{det}}^{k/2}, \mathbb{Z}_{(p)})^{sym}$. Then 
$F \mid U(p) \equiv 0 \pmod{p}$ if and only if
$$
\mathbb{D}^{p-1}(F)\equiv F \pmod{p}.
$$
\end{prop}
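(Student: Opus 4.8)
The plan is to read off the Fourier expansion of $\mathbb{D}^{p-1}(F)$ coefficient by coefficient and then invoke Fermat's little theorem. Since $F\in M_k(\Gamma^2(\mathcal{O}),\mathrm{det}^{k/2},\mathbb{Z}_{(p)})^{sym}$ has $p$-integral Fourier coefficients, every iterate of the heat operator does too, so all reductions modulo $p$ are meaningful. I would first recall that $\mathbb{D}$ simply multiplies the $T$-th Fourier coefficient $A_F(T)$ by the eigenvalue $4\det(T)=4(nm-N(r))$, where $T=\medmatrix{n}{r}{\overline{r}}{m}$; hence iterating $p-1$ times yields
$$
\mathbb{D}^{p-1}(F)=\sum_{\substack{n,m\in\mathbb{Z},\,r\in\mathcal{O}^{\#}\\ nm-N(r)\ge 0}}\bigl(4(nm-N(r))\bigr)^{p-1}A_F(n,r,m)\,q^n\zeta_1^r\zeta_2^{\overline{r}}(q')^m.
$$

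The one point I would make explicit is that each eigenvalue is a rational integer: writing $r=\tfrac{\alpha}{2}+\tfrac{\beta}{2}i$ with $\alpha,\beta\in\mathbb{Z}$ gives $4N(r)=\alpha^2+\beta^2\in\mathbb{Z}$, so $4(nm-N(r))=4nm-(\alpha^2+\beta^2)\in\mathbb{Z}$. This integrality is exactly what lets me apply Fermat's little theorem to the exponent $p-1$: for an integer $a$ one has $a^{p-1}\equiv 1\pmod p$ when $p\nmid a$ and $a^{p-1}\equiv 0\pmod p$ when $p\mid a$. Applying this with $a=4(nm-N(r))$, the terms indexed by $T$ with $p\mid 4(nm-N(r))$ contribute $0$ modulo $p$, while those with $p\nmid 4(nm-N(r))$ contribute $A_F(n,r,m)$ modulo $p$, so that
$$
\mathbb{D}^{p-1}(F)\equiv\sum_{\substack{n,m,r\\ p\nmid 4(nm-N(r))}}A_F(n,r,m)\,q^n\zeta_1^r\zeta_2^{\overline{r}}(q')^m\pmod p.
$$

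Subtracting this from the full Fourier expansion of $F$ leaves precisely the terms with $p\mid 4(nm-N(r))$, which by definition is $F\mid U(p)$; thus $F-\mathbb{D}^{p-1}(F)\equiv F\mid U(p)\pmod p$, and the claimed equivalence follows at once. There is essentially no obstacle here: the only step needing a moment of care is the integrality of the heat-operator eigenvalues, which ensures that the congruence behaviour of $a^{p-1}$ is dictated by Fermat's little theorem rather than by some subtler phenomenon in $\mathbb{Z}[i]$. Everything else is a direct comparison of Fourier coefficients, which is why the statement may fairly be called obvious.
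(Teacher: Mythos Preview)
Your argument is correct and is exactly the ``obvious'' proof the paper alludes to (the paper provides no proof beyond the remark that the result is obvious): one simply compares Fourier coefficients and applies Fermat's little theorem to the integer eigenvalues $4(nm-N(r))$ of $\mathbb{D}$, obtaining $F-\mathbb{D}^{p-1}(F)\equiv F\mid U(p)\pmod p$.
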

We consider the Hermitian cusp form 
$\chi_8\in M_8(\Gamma^2(\mathcal{O}), \mbox{det}^4, \mathbb{Z})^{sym}$.
By \thmref{lessthan}, there exists a cusp form 
$H\in M_{32}(\Gamma^2(\mathcal{O}), \mbox{det}^{16}, \mathbb{Z}_{(5)})^{sym}$
such that $\mathbb{D}^{4}(\chi_8) \equiv H \pmod{5}$. Now comparing the coefficients of 
$\mathbb{D}^{4}(\chi_8)$ and $\chi_8$ and using Sturm bound given in \cite[Theorem 2]{nagaoka2},
we deduce that $\mathbb{D}^{4}(\chi_8)\equiv \chi_8 \pmod{5}$. If $p=7$, then by \propref{hermitiancycle}, $\mho(\mathbb{D}(\chi_8))<16$. Thus the possible values of 
$\mho(\mathbb{D}(\chi_8))$ are $4$ and $10$. Since $H_4$ is a non-cusp form,
$\mho(\mathbb{D}(\chi_8))\not =4$. Therefore $\mho(\mathbb{D}(\chi_8))=10$. Now by applying
\propref{hermitiancycle} repeatedly, we deduce that
$\mho(\mathbb{D}^6(\chi_8))=50\not =\mho(\chi_8)=8$. Thus by \propref{Up},
$\chi_8\mid U(7)\not\equiv 0\pmod{7}$. If $p=11$, then  by \thmref{hermitiancong} we deduce that the possible values of $\mho(\mathbb{D}^5(\chi_8))$ are $8$ and $18$. 
If $\mho(\mathbb{D}^5(\chi_8))=8$, then $\mathbb{D}^5(\chi_8)\equiv \beta  \chi_8 \pmod {11}$ for some $\beta \in \{0, 1, \cdots, 10 \}$. We know that 
$A_{\chi_8}(1, (1+i)/2, 1)=1$ and $A_{\chi_8}(1, -1/2, 1)=-486$. Therefore
$\mathbb{D}^5(\chi_8)\not \equiv \beta  \chi_8 \pmod {11}$ for any $\beta \in \{0, 1, \cdots, 10 \}$.
Thus $\mho(\mathbb{D}^5(\chi_8))\not =8$.
Hence $\mho(\mathbb{D}^5(\chi_8))=18$ and $\chi_8\mid U(11)\not \equiv 0$ by \thmref{hermitiancong}.

\subsection{Ramanujan-type congruences}
We use the following result to obtain some examples of Hermitian modular forms having Ramanujan-type congruences. Using \thmref{lessthan} and \thmref{nonzero}, we obtain the following result.
 \begin{thm}\label{belong}
Let $F \in M_k(\Gamma^2(\mathcal{O}), {\rm{det}}^{k/2}, \mathbb{Z}_{(p)})^{sym}$. If 
%then $\theta(F)\in \overline{M}_{k+p+1}$. Let 
$$
G:=\mathbb{D}^{\frac{p+1}{2}}(F)+\left(\frac{b}{p}\right)\mathbb{D}(F),
$$
then there exists $H\in M_{k+\frac{(p+1)^2}{2}}(\Gamma^2(\mathcal{O}), 
{\rm{det}}^{\frac{k}{2}+\frac{(p+1)^2}{4}}, \mathbb{Z}_{(p)})^{sym}$ such that
$G\equiv H \pmod{p}$. Moreover, $F$ has a Ramanujan-type congruence at $b\not \equiv 0 \pmod p$ if and only if $G \equiv 0 \pmod p$.
\end{thm}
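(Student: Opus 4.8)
The plan is to read off the ``moreover'' statement directly from \thmref{nonzero}, and to construct $H$ by iterating the weight-raising property of the heat operator recorded in \thmref{lessthan}, using the form $F_{p-1}\equiv 1\pmod p$ of \thmref{nagaokathm1} to reconcile the two summands of $G$, which a priori live in different weights. For the equivalence in the last sentence, note that for $b\not\equiv 0\pmod p$ one has $\left(\frac{b}{p}\right)=\pm 1$, so the congruence $G\equiv 0\pmod p$ is literally the statement $\mathbb{D}^{\frac{p+1}{2}}(F)\equiv -\left(\frac{b}{p}\right)\mathbb{D}(F)\pmod p$. By \thmref{nonzero} this holds if and only if $F$ has a Ramanujan-type congruence at $b\pmod p$, so the second assertion follows at once from the definition of $G$.

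For the first assertion I would first establish, by induction on $j$, that $\mathbb{D}^j(F)$ is congruent mod $p$ to a symmetric Hermitian cusp form $G_j\in M_{k+j(p+1)}(\Gamma^2(\mathcal{O}),\det^{(k+j(p+1))/2},\mathbb{Z}_{(p)})^{sym}$. The base case $j=1$ is exactly \thmref{lessthan}. For the inductive step, since $\mathbb{D}$ multiplies the Fourier coefficient at $T$ by the integer $4\det(T)$, it preserves congruences mod $p$; hence $\mathbb{D}^{j+1}(F)=\mathbb{D}(\mathbb{D}^j(F))\equiv \mathbb{D}(G_j)\pmod p$, and applying \thmref{lessthan} to the symmetric form $G_j$ produces the desired $G_{j+1}$ in weight $k+(j+1)(p+1)$. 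Taking $j=1$ and $j=\frac{p+1}{2}$ yields $\mathbb{D}(F)\equiv G_1$ of weight $k+p+1$ and $\mathbb{D}^{\frac{p+1}{2}}(F)\equiv G_{\frac{p+1}{2}}$ of weight $k+\frac{(p+1)^2}{2}$.

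The two terms of $G$ thus sit in different weights, and reconciling them is the crux of the argument. I would multiply $G_1$ by $F_{p-1}^{\frac{p+1}{2}}$, where $F_{p-1}$ is the symmetric Hermitian modular form of weight $p-1$ with $F_{p-1}\equiv 1\pmod p$ supplied by \thmref{nagaokathm1}. Since $F_{p-1}^{\frac{p+1}{2}}$ has weight $(p-1)\frac{p+1}{2}=\frac{p^2-1}{2}$, the product $F_{p-1}^{\frac{p+1}{2}}G_1$ is symmetric of weight $(k+p+1)+\frac{p^2-1}{2}=k+\frac{(p+1)^2}{2}$, matching both the weight of $G_{\frac{p+1}{2}}$ and the character $\det^{\frac{k}{2}+\frac{(p+1)^2}{4}}$ claimed in the statement (here one checks that $\frac{(p+1)^2}{2}$ is even and $\frac{(p+1)^2}{4}$ is an integer, since $p$ is odd). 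Setting
$$
H:=G_{\frac{p+1}{2}}+\left(\frac{b}{p}\right)F_{p-1}^{\frac{p+1}{2}}G_1,
$$
we obtain a symmetric Hermitian modular form with $p$-integral Fourier coefficients of the required weight and character, and since $F_{p-1}^{\frac{p+1}{2}}\equiv 1\pmod p$ we get $H\equiv G_{\frac{p+1}{2}}+\left(\frac{b}{p}\right)G_1\equiv \mathbb{D}^{\frac{p+1}{2}}(F)+\left(\frac{b}{p}\right)\mathbb{D}(F)=G\pmod p$, as wanted.

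The only genuinely delicate point is the weight and character bookkeeping that forces the corrective factor to be exactly $F_{p-1}^{(p+1)/2}$; everything else is routine. In particular, symmetry and $p$-integrality are preserved throughout, since both are closed under products and sums and are built into the conclusion of \thmref{lessthan} and \thmref{nagaokathm1}, so no separate verification of these properties for $H$ is needed beyond its construction as a sum of symmetric $\mathbb{Z}_{(p)}$-integral forms.
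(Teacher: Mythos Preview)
Your proof is correct and follows essentially the approach the paper indicates: the paper simply says the result is obtained ``using \thmref{lessthan} and \thmref{nonzero}'' without further detail, and your argument is the natural expansion of that sketch. The one ingredient you add explicitly is the weight-raising form $F_{p-1}$ from \thmref{nagaokathm1} to place both summands of $G$ in the same weight; the paper does not cite this, but it is the standard (and unavoidable) device to reconcile $G_1$ and $G_{(p+1)/2}$, so this is a detail the paper is tacitly assuming rather than a different route.
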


By \thmref{nonzero}, if 
$F \in M_k(\Gamma^2(\mathcal{O}), {\rm{det}}^{k/2}, \mathbb{Z}_{(p)})^{sym}$ has a 
Ramanujan-type congruence at $b\pmod{p}$, then $p\le k$ or $p=2k-3$. Therefore we use 
 \thmref{belong} and the Sturm bound given in \cite[Theorem 2]{nagaoka2} to get some examples of Hermitian modular forms having Ramanujan-type congruences. The following table consists of examples of Hermitian modular forms of weight $\le 14$ having Ramanujan-type congruences.
 
 \begin{center}
\begin{tabular}{ | c | c | c | c | c | c | } 
\hline
 Hermitian modular forms & $b \pmod p$ \\
 \hline
 $F=\chi_8-6H_4^2$, $F\not\equiv 0 \pmod 7$, $\mathbb{D}(F)\equiv 0 \pmod 7$ & $b \equiv 1, 2, 3, 4, 5, 6 \pmod 7$\\
\hline
$F_{10}$ & $b \equiv 1, 4 \pmod 5$\\
\hline
$H_4F_{10}$ &  $b \equiv 1, 4 \pmod 5 $\\
\hline
$H_4^2H_6+H_6\chi_8$ & $b\equiv 1, 4 \pmod 5 $ \\
\hline

\end{tabular}
\end{center}

\bigskip

\noindent{\bf Acknowledgements.}
We have used the open source mathematics software SAGE to do our computations.
The authors would like to thank  Dr. Soumya Das for  his valuable suggestions.
The research work of the first author was partially supported by the DST-SERB grant MTR/2017/000022.

\end{document}